\newtheorem{theorem}{Theorem}[section]
\newtheorem{lemma}[theorem]{Lemma}
\newtheorem{proposition}[theorem]{Proposition}
\newtheorem{corollary}[theorem]{Corollary}
\newtheorem{remark}[theorem]{Remark}
\theoremstyle{definition}
\newtheorem{assumption}[theorem]{Assumption}
\numberwithin{equation}{section}
\def\R{{\mathbb R}}
\def\C{{\mathbb C}}
\def\eps{\varepsilon}
\def\Re {{\rm Re}\,}
\def\E{{\mathbb E}}
\def\P{{\mathbb P}}
\def\Z{{\mathbb Z}}
\begin{document}

\title[Chung-type LIL and exact moduli of continuity]
{Chung-type law of the iterated logarithm and 
exact moduli of continuity for a class of
anisotropic Gaussian random fields}
\author{Cheuk Yin Lee}
\address{Institut de math\'ematiques, \'Ecole polytechnique f\'ed\'erale de Lausanne,
Station 8, CH-1015 Lausanne, Switzerland}
\email{cheuk.lee@epfl.ch}
\author{Yimin Xiao}
\address{Department of Statistics and Probability, Michigan State University, East Lansing, MI 48824, United States}
\email{xiao@stt.msu.edu}

\keywords{Gaussian random fields; harmonizable representation; strong local nondeterminism; law of the iterated 
logarithm; modulus of continuity; stochastic heat equation}

\subjclass[2010]{
60G15, 
60G60, 
60G17. 
}

\begin{abstract}
We establish a Chung-type law of the iterated logarithm and
the exact local and uniform moduli of continuity 
for a large class of anisotropic Gaussian random fields with a harmonizable-type integral representation 
and the property of strong local nondeterminism. Compared with the existing results in the literature, 
our results do not require the assumption of stationary increments and provide more precise upper 
and lower bounds for the limiting constants. The results are applicable to the solutions of a class 
of linear stochastic partial differential equations driven by a fractional-colored Gaussian noise, 
including the stochastic heat equation.
\end{abstract}

\maketitle

\section{Introduction}


The purpose of this paper is to establish a general framework that is useful for studying 
the regularity properties of sample functions of anisotropic Gaussian random fields and 
can be directly applied to the solutions of linear SPDEs. This is mainly motivated by 
\cite{DMX17} and \cite{X09}. 
We consider a class of Gaussian random fields $\{v(x), x \in \R^k\}$ that satisfy Assumption 
2.1 in  \cite{DMX17} (see Assumption \ref{a1} below) and the property of strong local 
nondeterminism, or strong LND for short, with respect to an anisotropic metric (see 
Assumption \ref{a2} below). For these Gaussian random fields, we prove some limit 
theorems that provide precise information about the oscillation behavior of the sample 
function $x \mapsto v(x)$. 

The main results of this paper are as follows. We prove a Chung-type law of the iterated 
logarithm (LIL) in Theorem \ref{Thm:ChungLIL},  the exact local and uniform moduli of 
continuity in Theorems \ref{Thm:LIL} and \ref{Thm:MC}, respectively. Our strategy is 
to first prove a zero--one law for each of the limit theorems (see Lemma \ref{0-1law}), 
showing that the limit is equal to a constant in $[0, \infty]$ almost surely. Then, we prove 
that the constant is in fact positive and finite by establishing a finite upper bound and 
positive lower bound for the limit, and therefore, the corresponding modulus function 
in the limit theorem is sharp. We give an application of the main results to the solutions 
of a class of linear SPDEs
\[ 
\frac{\partial}{\partial t} u(t, x) = \mathscr{L} u(t, x) + \dot W(t, x) 
\]
driven by a fractional-colored Gaussian noise, including the stochastic heat equation 
\cite{BT08,HSWX}. It is also a notable result of this paper that $u(t, x)$ satisfies the 
strong LND property (see Lemma \ref{Lem:SHE-a2}), which strengthens a result of 
\cite{HSWX}. 

In general, there are different ways to describe the sample path variation of random 
fields. The Chung-type LIL characterizes the lower envelope ($\liminf$) for the local 
oscillations of the sample functions at a fixed point. The local modulus of continuity at 
a fixed point is, for many Gaussian random fields, given by the ordinary Khinchin-type 
LIL, which complements the Chung-type LIL by characterizing the upper envelope 
($\limsup$) for the local oscillations at a fixed point. On the other hand, the uniform 
modulus of continuity specifies the maximum oscillation of the sample functions over 
certain sets such as a compact interval.

The Chung-type LIL for a class of isotropic and anisotropic Gaussian random fields 
with stationary increments has been studied by Li and Shao \cite{LS01} (see also 
\cite{MR95,X97}) and Luan and Xiao \cite{LX10}. The exact local and uniform moduli 
of continuity for a class of anisotropic Gaussian random fields have been studied by 
Meerschaert et al.~\cite{MWX13}. The novelty of the present paper is a general 
framework based on a harmonizable-type representation and the strong LND 
property of a Gaussian random field that may not have stationary increments, which 
is employed to extend and improve some of the results of \cite{LS01, LX10, MWX13}, and can be directly applied to the solutions of SPDEs. 
In particular, with a harmonizable-type 
representation, we are able to decompose the random field and create independence, 
making it possible to establish general zero--one laws (Lemma \ref{0-1law}) which can 
be strengthened to prove the Chung-type LIL as well as the exact local and uniform 
moduli of continuity. The independence structure from the harmonizable-type representation 
also allows the use of the second Borel--Cantelli lemma, which facilitates a simple proof 
of one of the bounds for the Chung-type LIL and the exact local modulus of continuity.



Let us summarize the major differences and improvements in our results compared 
to the existing results in the literature. The Chung-type LIL results in \cite{LS01}, 
\cite{MR95}, \cite{X97} and \cite{LX10} were proved for Gaussian random fields 
with stationary increments, meaning that for any $h \in \R^k$,
\[ 
\{v(x+h) - v(h), x \in \R^k \} \overset{d}{=} \{ v(x) - v(0), x \in \R^k \},  
\]
and, in particular, it is enough for them to consider the Chung-type LIL at the origin.
Our Theorem \ref{Thm:ChungLIL} applies to a wider class of Gaussian random fields 
that may not necessarily have stationary increments, and we prove a Chung-type LIL 
at any fixed point $x_0$. 
Moreover, our Theorem \ref{Thm:ChungLIL} gives explicit upper and lower bounds 
for the constant in Chung's LIL in terms of the constants that appear in the 
small ball probability estimates. This implies that the limiting constant in Chung's 
LIL is given in terms of the small ball constant provided it exists, see \eqref{Eq:SBcon} 
below. We remark that the connection between the bounds on the limiting constant 
in Chung's LIL and the small ball estimates is also given in Theorem 7.1 of \cite{LS01}, 
but not explicitly stated in Theorem 1.1 of \cite{LX10}.

For exact local and uniform moduli of continuity of Gaussian processes with stationary 
increments, some general theory has been established by Marcus and Rosen \cite{MR}. 
Also, Meerschaert et al.~\cite{MWX13} have used the sectorial LND property and 
the Fernique-type inequalities to prove the exact uniform modulus of continuity of 
anisotropic Gaussian random fields. Especially,  \cite{MWX13} provides an effective 
way to prove the lower bound for the uniform modulus of continuity, which is usually 
a more difficult task than proving the upper bound. Our Theorems \ref{Thm:LIL} and 
\ref{Thm:MC}  improve  the results in \cite{MWX13}. 
We prove exact local and uniform moduli of continuity under two metrics 
respectively: one is the canonical metric $d$ defined in \eqref{Def:d}, 
and the other one is the metric  $\Delta$ defined in \eqref{Def:Delta}, 
which is comparable to $d$ under Assumptions \ref{a1} and \ref{a3}.
For the local modulus of continuity in Theorem \ref{Thm:LIL}, under the canonical 
metric $d$, we are able to prove that the exact constant in the LIL is $\sqrt 2$.
This is an improvement to Theorem 5.6 of \cite{MWX13}, which only shows that 
the constant is at least $\sqrt 2$ (see Remark \ref{Rmk:LIL} below).
We achieve this sharper result by using a tail probability estimate due to 
Talagrand \cite{T94}, which is stated in Lemma \ref{T94} below.

For the uniform modulus of continuity, the strong LND assumption in our Theorem 
\ref{Thm:MC} is stronger than the condition in Theorem 4.1 of \cite{MWX13}, but 
we obtain better upper and lower bounds for the limiting constant (see Remark \ref{Rmk:MC} 
below). Our approach is to start with a crude upper bound and then optimize it using 
an approximation argument based on anisotropic lattice points. We also refine the the 
proof in \cite{MWX13} based on the strong LND property and a conditioning argument 
to get a sharper lower bound.

The rest of the paper is organized as follows. In Section 2, we state the assumptions 
for the Gaussian random fields to be considered in this paper and give some remarks 
about the assumptions. We also briefly discuss the difference in the LND properties 
between stochastic heat and wave equations. In Section 3, we prove zero--one laws 
which will be useful for establishing the Chung-type LIL and the exact local and uniform 
moduli of continuity. In Section 4, we establish small ball probability estimates and 
Chung's LIL. In Sections 5 and 6, we prove the exact local and uniform moduli of continuity,
respectively. In Section 7, we consider as an application a class of linear  SPDEs driven 
by a fractional-colored Gaussian noise \cite{BT08,HSWX}. We establish harmonizable-type 
representations and strong LND property for the solutions, and apply our results to obtain 
Chung's LIL and exact local and uniform moduli of continuity. These results improve 
significantly those in \cite{HSWX,TX17}. Finally, in Section 8, we provide another example 
of anisotropic Gaussian random fields that do not have stationary increments and satisfy 
Assumptions \ref{a1} and \ref{a2} of the present paper.

\section{Assumptions}

Consider a real-valued continuous centered Gaussian random field $v = \{ v(x), x \in \R^k \}$. 
Let $T$ be a compact rectangle in $\R^k$. We introduce some assumptions for $v$. Notice 
that Assumption \ref{a1}  is from \cite{DMX17} and Assumption \ref{a2} is from \cite{X09}.

\begin{assumption}\label{a1}
There exists a centered Gaussian random field $\{ v(A, x) , A \in
\mathscr{B}(\mathbb{R}_+), x \in T \}$, where $\mathscr{B}(\mathbb{R}_+)$ is the
Borel $\sigma$-algebra on $\mathbb{R}_+ :=[0, \infty)$, such that  the following 
properties hold:

(a) For every $x \in T$, $A \mapsto v(A, x)$ is an independently scattered Gaussian 
noise such that $v(\mathbb{R}_+, x) = v(x)$ and 
the processes $v(A, \cdot)$ and $v(B, \cdot)$ are independent whenever $A$ and $B$ 
are disjoint.

(b) There exist constants $c_0 > 0$, $ a_0 \ge 0$, and $\gamma_j > 0$, $j = 1, \dots, k$, 
such that for all $a_0 \le a < b \le \infty$
and $x, y \in T$,
\begin{align}\label{Eq:a1-1}
\big\| v([a, b), x) - v(x) - v([a, b), y) + v(y) \big\|_{L^2}
\le c_0 \bigg( \sum_{j=1}^k a^{\gamma_j}|x_j - y_j| + b^{-1} \bigg)
\end{align}
and
\begin{align}\label{Eq:a1-2}
\big\| v([0, a_0), x) - v([0, a_0), y) \big\|_{L^2} \le c_0 \sum_{j=1}^k |x_j - y_j|.
\end{align}
In the above, $\|X\|_{L^2} := [\E(X^2)]^{1/2}$ for a random variable $X$.
\end{assumption}

Define $\alpha_j$ $(j = 1, \ldots, k$) by the relation $\gamma_j = \alpha_j^{-1} - 1$, that is,
$\alpha_j = (\gamma_j + 1)^{-1}$. Note that $0 < \alpha_j < 1$.
The parameters $\alpha_j$ characterize the H\"older regularity of $v$ 
(see Lemma \ref{Lem:DMX17} below).
Let $Q = \sum_{j = 1}^k \alpha_j^{-1}$
and define the metric $\Delta$ by
\begin{equation}\label{Def:Delta}
\Delta(x, y) := \sum_{j=1}^k |x_j - y_j|^{\alpha_j}, \quad x, y \in \R^k.
\end{equation}
We will also use the canonical metric $d = d_v$ associated with $v$. 
It is defined by
\begin{equation}\label{Def:d}
d(x, y) = d_v(x, y) := \big\|v(x) - v(y) \big\|_{L^2}, \quad x, y \in \R^k.
\end{equation}

\begin{assumption}\label{a2}
There exists a constant $c_2 > 0$
such that for all integers $n \ge 1$,
for all $x, x^1, \dots, x^n \in T$,
\[ \mathrm{Var} \big(v(x) | v(x^1), \dots, v(x^n) \big) \ge 
c_2 \min_{0\le i \le n} \Delta^2(x, x^i), \]
where $x^0 = 0$.
\end{assumption}

\begin{assumption}\label{a3}
There exists a constant $c_3 > 0$ such that for all $x, y \in T$,
\[  \big\|v(x) - v(y)\big\|_{L^2} \ge c_3 \Delta(x, y). \]
\end{assumption}

\medskip

The following are some remarks about these assumptions. Assumption \ref{a1} above 
is the same as Assumption 2.1 in \cite{DMX17} and \cite{DLMX}, and is satisfied by 
many Gaussian random fields that have a spectral or harmonizable-type representation, 
or more generally, a stochastic integral representation. For example, it is shown in 
\cite{DMX17} that the solutions of linear stochastic heat and wave equations admit 
harmonizable-type representations and satisfy Assumption \ref{a1}. 
The same is true for fractional Brownian sheets \cite{DLMX}.  
Assumption \ref{a1} implies an upper bound for the increments of $v$ in $L^2$-norm in 
terms of the metric $\Delta$:

\begin{lemma}\label{Lem:DMX17}
Under Assumption \ref{a1}, there exist constants $\eps_1 > 0$ and $c_1$ 
such that for all $x, y \in T$ with $\Delta(x, y) \le \eps_1$,
\begin{equation}\label{UB:d}
\|v(x) - v(y) \|_{L^2} \le c_1 \Delta(x, y).
\end{equation}
\end{lemma}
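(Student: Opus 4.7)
The plan is to exploit the independently scattered noise of Assumption~\ref{a1}(a) to split $v(x)-v(y)$, at a scale parameter $a\ge a_0$ to be chosen, into two independent (hence $L^2$-orthogonal) pieces,
\[
v(x)-v(y)=\bigl[v([0,a),x)-v([0,a),y)\bigr]+\bigl[v([a,\infty),x)-v([a,\infty),y)\bigr],
\]
bound each piece in $L^2$ by a different specialisation of~\eqref{Eq:a1-1}, and then optimise $a$ as a function of $|x_j-y_j|$.

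For the low-frequency piece I would take \eqref{Eq:a1-1} at this $a$ with $b=\infty$. Since $v([a,\infty),x)-v(x)=-v([0,a),x)$ (and similarly in $y$), the inequality collapses to
\[
\bigl\|v([0,a),x)-v([0,a),y)\bigr\|_{L^2}\le c_0\sum_{j=1}^k a^{\gamma_j}|x_j-y_j|.
\]
For the high-frequency piece I would apply \eqref{Eq:a1-1} with $(a_0,a)$ in place of $(a,b)$; expanding $v([a_0,a),x)-v(x)=-v([0,a_0),x)-v([a,\infty),x)$ and using the independence of $v([0,a_0),\cdot)$ and $v([a,\infty),\cdot)$ to split the $L^2$ norm by Pythagoras, I get
\[
\bigl\|v([a,\infty),x)-v([a,\infty),y)\bigr\|_{L^2}\le c_0\Bigl(\sum_{j=1}^k a_0^{\gamma_j}|x_j-y_j|+\frac{1}{a}\Bigr).
\]

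Adding these two squared bounds orthogonally, I would then choose $a=\min_{1\le j\le k}|x_j-y_j|^{-\alpha_j}$. The key algebraic identity $\alpha_j\gamma_j=1-\alpha_j$ yields
\[
a^{\gamma_j}|x_j-y_j|\le|x_j-y_j|^{\alpha_j}\quad\text{and}\quad\frac{1}{a}=\max_{j}|x_j-y_j|^{\alpha_j}\le\Delta(x,y),
\]
so both of these contributions are already $O(\Delta(x,y))$. The threshold $\eps_1$ plays two roles in making this choice legitimate: it guarantees $a\ge a_0$ so that \eqref{Eq:a1-1} is applicable (only an issue when $a_0>0$), and it forces $|x_j-y_j|\le 1$, so that the leftover linear term $a_0^{\gamma_j}|x_j-y_j|$ is dominated by $|x_j-y_j|^{\alpha_j}\le\Delta(x,y)$.

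The whole proof is essentially bookkeeping around this single scale-optimisation step, and no serious technical obstacle is expected since the lemma is implicit in \cite{DMX17}. The only subtlety is maintaining the constraint $a\ge a_0$ throughout the scale balancing, which is precisely what the restriction $\Delta(x,y)\le\eps_1$ is designed to accommodate.
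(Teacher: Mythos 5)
Your argument is correct, but it is worth noting that the paper itself does not give an argument at all: its ``proof'' is a one-line citation of Proposition 2.2 of \cite{DMX17}, with the explicit choices $\eps_1=\min\{a_0^{-1},1\}$ and $c_1=4c_0$. What you have written is essentially a self-contained reconstruction of the argument behind that cited proposition: split $v(x)-v(y)$ into the independent pieces over $[0,a)$ and $[a,\infty)$, bound the first via \eqref{Eq:a1-1} with $b=\infty$, bound the second via \eqref{Eq:a1-1} on $[a_0,a)$ together with the Pythagorean identity for independent centered summands, and then balance by taking $a=\min_j|x_j-y_j|^{-\alpha_j}$, using $\alpha_j\gamma_j=1-\alpha_j$ so that every term is $O(\Delta(x,y))$; the restriction $\Delta(x,y)\le\eps_1$ indeed guarantees both $a\ge a_0$ and $|x_j-y_j|\le 1$, exactly as in the paper's choice of $\eps_1$. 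The only loose thread is cosmetic: \eqref{Eq:a1-1} is stated for $a_0\le a<b\le\infty$, so your second application (lower endpoint $a_0$, upper endpoint $a$) needs $a>a_0$ strictly; when $a_0>0$ this is secured by taking $\eps_1<a_0^{-1}$ (or by replacing your choice of $a$ by, say, $2\min_j|x_j-y_j|^{-\alpha_j}$ at the cost of a harmless factor), so it does not affect the validity of the proof. Your route thus buys a reader a proof that does not require consulting \cite{DMX17}, at the price of slightly less explicit constants than the paper's $c_1=4c_0$.
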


\begin{proof}
This is a consequence of Proposition 2.2 of \cite{DMX17} where 
$\eps_1 = \min\{a_0^{-1}, 1\}$ and $c_1 = 4c_0$.
\end{proof}

Assumption \ref{a2} is known as the property of strong local nondeterminism (strong LND) 
with respect to the metric $\Delta$, which has found various applications in studying probabilistic, 
analytic and fractal properties of Gaussian random fields (cf.~\cite{X08, X09}).
For Gaussian random fields with stationary
increments, \cite{LX12} provides sufficient conditions in terms of their spectral measures for 
them to have the property of strong LND. In Sections 7 and 8, we will show that the solutions
of a class of linear SPDEs driven by a fractional-colored Gaussian noise and a 
class of Gaussian random fields with non-stationary increments also have the property of 
strong LND. 
 
Assumption \ref{a2} implies the lower bound in Assumption \ref{a3} if 
$T \subset \R^k\backslash \{0\}$ is compact: for all $x, y \in T \subset \R^k\backslash \{0\}$,
\begin{equation}\label{LB:d}
\|v(x) - v(y)\|_{L^2} \ge \sqrt{c_2} \Delta(x, y).
\end{equation}

The strong LND property (Assumption \ref{a2}) will be used to prove optimal bounds for 
the small ball probability, which is the main ingredient of the proof of Chung's LIL (Theorem 
\ref{Thm:ChungLIL}). This property will also be needed in the proof of the lower bound 
for the exact uniform modulus of continuity (Theorem \ref{Thm:MC}). Assumption \ref{a3} 
is much weaker than Assumption \ref{a2}. To establish the exact local modulus of continuity, 
or the ordinary LIL (Theorem \ref{Thm:LIL}), we will use Assumption \ref{a3}, 
but not Assumption \ref{a2}.

In Lemma \ref{Lem:SHE-a2} of this paper, we will prove that the solutions of a class of 
linear SPDEs with a fractional-colored Gaussian noise, including the stochastic 
heat equation, satisfy the strong LND property. We remark that, on the other hand, the solution 
of the linear stochastic wave equation does not satisfy the strong LND property, but satisfies 
a different form of LND \cite{LX19, L21+}. For this reason, the Chung-type LIL for the stochastic 
wave equation has a different form than the stochastic heat equation; see \cite{L21}. The 
uniform modulus of continuity for the stochastic wave equation is also established in 
\cite{LX19, L21+}. 
For the local modulus of continuity, our Theorem \ref{Thm:LIL} still applies to 
the stochastic wave equation because it does not require the strong LND property.
It also applies to fractional Brownian sheets.

\section{Zero--one laws}

In Lemma \ref{0-1law} below, we establish zero--one laws 
for the Chung-type LIL and the local and uniform moduli of continuity,
showing that the limit in each of these laws is equal to a constant almost surely.
At this stage, we do not rule out the possibility that the constant could be zero 
or infinity. Later in our main theorems, we will strengthen these zero--one laws
and prove that the limiting constants are indeed positive and finite. 

\begin{lemma}\label{0-1law}
The following statements hold under Assumption \ref{a1}. 
\begin{enumerate}
\item[(i)] For any fixed $x_0 \in T$, 
there exists a constant $0 \le \kappa_1 \le \infty$ which may depend on $x_0$ such that
\begin{equation}\label{0-1law:1}
\liminf_{r \to 0+} \sup_{x \in T:\, \Delta(x, x_0) \le r}
\frac{|v(x) - v(x_0)|}{r (\log\log(1/r))^{-1/Q}} 
= \kappa_1 \quad \text{a.s.}
\end{equation}
\item[(ii)] For any fixed $x_0 \in T$, there exists a constant $0 \le \kappa_2 \le \infty$ 
which may depend on $x_0$ such that
\begin{equation}\label{0-1law:2}
\lim_{r \to 0+} \sup_{x \in T:\, 0 < \Delta(x, x_0) \le r}
\frac{|v(x) - v(x_0)|}{\Delta(x, x_0) \sqrt{\log\log(\Delta(x, x_0)^{-1})}} = \kappa_2
\quad \text{a.s.}
\end{equation}
\item[(iii)] There exists a constant $0 \le \kappa_3 \le \infty$ such that
\begin{equation}\label{0-1law:3}
\lim_{r \to 0+} \sup_{x, y \in T:\, 0 < \Delta(x, y) \le r}
\frac{|v(x) - v(y)|}{\Delta(x, y)\sqrt{\log(\Delta(x, y)^{-1})}} = \kappa_3 \quad \text{a.s.}
\end{equation}
\end{enumerate}
Moreover, under Assumptions \ref{a1} and \ref{a3}, 
\eqref{0-1law:2} and \eqref{0-1law:3} 
also hold when $\Delta$ is replaced by the canonical metric $d$, 
with possibly different constants.
\end{lemma}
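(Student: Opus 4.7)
The plan is to exploit the independently scattered Gaussian noise structure in Assumption \ref{a1} to split $v$ into a smooth low-frequency piece and an independent high-frequency remainder, and then apply a Kolmogorov-type tail argument.

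For each $a \ge a_0$, set $V_a(x) := v([0, a), x)$ and $W_a(x) := v([a, \infty), x)$. Then $v = V_a + W_a$, and by Assumption \ref{a1}(a) the processes $V_a$ and $W_a$ are independent. Taking $b = \infty$ in \eqref{Eq:a1-1} yields $\|V_a(x) - V_a(y)\|_{L^2} \le c_0 \sum_{j=1}^k a^{\gamma_j} |x_j - y_j|$, so $V_a$ has $L^2$-Lipschitz increments in the $\ell^1$ metric with a constant depending only on $a$. Dudley's entropy bound on the compact rectangle $T$ then produces an a.s.\ sample modulus $|V_a(x) - V_a(y)| \le C_a(\omega) \|x - y\|_1 \sqrt{\log(1/\|x-y\|_1)}$. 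Setting $\alpha_{\max} := \max_j \alpha_j \in (0, 1)$ and noting that $\Delta(x, y) \le \delta \le 1$ forces $\|x - y\|_1 \le k\, \delta^{1/\alpha_{\max}}$, we conclude
\[
\sup_{x, y \in T:\ \Delta(x, y) \le \delta} |V_a(x) - V_a(y)| \ = \ O\bigl(\delta^{1/\alpha_{\max}} \sqrt{\log(1/\delta)}\bigr) \text{ a.s.,}
\]
which is $o$ of each of the three normalizers in \eqref{0-1law:1}--\eqref{0-1law:3} as $\delta \to 0$, since $1/\alpha_{\max} > 1$.

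Write $L(v)$ for any of the three limits in the lemma (all exist in $[0, \infty]$; trivially in (i), and in (ii)--(iii) because the inner sup is nondecreasing in $r$). The preceding bound on $V_a$, combined with the triangle inequality, gives $L(v) = L(W_a)$ almost surely for every $a \ge a_0$. Since $L(W_a)$ is measurable with respect to $\sigma(v([a, \infty), x): x \in T)$, it is independent of $\mathcal{G}_a := \sigma(v([0, a), x): x \in T)$. As $a \to \infty$, $\mathcal{G}_a \uparrow \mathcal{G}_\infty := \sigma(v(x): x \in T)$, because Assumption \ref{a1}(a) gives $V_a(x) \to v(x)$ in $L^2$. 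A standard martingale-convergence argument then forces $L(v)$ to be independent of $\mathcal{G}_\infty$; being itself $\mathcal{G}_\infty$-measurable, $L(v)$ is a.s.\ constant, yielding \eqref{0-1law:1}, \eqref{0-1law:2}, and \eqref{0-1law:3}.

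For the final assertion, Lemma \ref{Lem:DMX17} and Assumption \ref{a3} give $c_3\, \Delta(x, y) \le d(x, y) \le c_1\, \Delta(x, y)$ for $\Delta(x, y) \le \eps_1$, so the level sets $\{d(x, x_0) \le r\}$ and the normalizers built from $d$ sandwich those built from $\Delta$ up to constant factors; the same decomposition-plus-independence argument then yields the zero-one law with $\Delta$ replaced by $d$, with possibly different limiting constants. The main obstacle is verifying that the low-frequency piece $V_a$ is uniformly negligible relative to each of the three normalizers; this requires careful bookkeeping through the metric change from $\ell^1$ to $\Delta$, but is otherwise routine.
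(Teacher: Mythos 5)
Your proposal is correct in substance and follows the same core strategy as the paper: use Assumption \ref{a1} to split $v$ into a low-frequency part $V_a=v([0,a),\cdot)$ and an independent high-frequency part $W_a=v([a,\infty),\cdot)$, show via the Lipschitz $L^2$-bound (with $b=\infty$) that $V_a$ has a modulus of continuity strictly better than all three normalizers (you use Dudley's entropy bound where the paper uses Kolmogorov's continuity theorem to get $\beta$-H\"older continuity with $\alpha_{\max}<\beta<1$; either works), conclude $L(v)=L(W_a)$ a.s.\ for every $a\ge a_0$, and then invoke an independence argument; the $d$-metric case follows from $c_3\Delta\le d\le c_1\Delta$ exactly as in the paper. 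The one place where your write-up differs and is not quite airtight is the concluding zero--one step: the family $\mathcal{G}_a=\sigma(V_a(x):x\in T)$ is \emph{not} obviously increasing in $a$, since $V_a$ cannot be recovered from $V_{a'}$ for $a<a'$ (only the sum of the two independent pieces is known), and monotonicity (or at least a $\pi$-system structure) is what lets you upgrade ``independent of each $\mathcal{G}_a$'' to ``independent of $\bigvee_a\mathcal{G}_a\supseteq\sigma(v(x):x\in T)$'' via martingale convergence. This is easily repaired by replacing $\mathcal{G}_a$ with the genuinely increasing $\sigma$-algebra generated by the noise restricted to $[0,a)$, e.g.\ $\sigma(v([0,s),x):s\le a,\,x\in T)$, which is still independent of $\sigma(W_a)$ by the independently scattered property; alternatively, the paper avoids the issue altogether by observing that $L(v)=L(W_n)$ for all integers $n$ makes $L(v)$ measurable with respect to the tail $\sigma$-algebra of the independent sequence $v_n=v([n,n+1),\cdot)$ and then applying Kolmogorov's zero--one law directly. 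One further small point: for (ii) and (iii) the normalizers involve $\Delta(x,y)$ itself rather than the outer radius $\delta$, so the negligibility of $V_a$ should be stated pointwise in $\Delta(x,y)$; since your Dudley modulus is pointwise, this is immediate, but the phrase ``$o$ of each of the three normalizers'' glosses over it.
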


\begin{proof}
By Assumption \ref{a1}, $v(x)$ can be represented as the infinite sum
\begin{equation}\label{v-series}
v(x) = \sum_{n=0}^\infty v_n(x),
\end{equation}
where $v_n(x) = v([n, n+1), x)$ and $v_n = \{ v_n(x), x \in T\}$ ($n = 0, 1, \dots$) 
is a sequence of independent Gaussian random fields. 
Let $\mathscr{F}_n$ be the $\sigma$-algebra generated by the processes
$\{v_m, m \ge n\}$ and the null events, and let
$\mathscr{F}_\infty = \bigcap_{n=0}^\infty \mathscr{F}_n$ be the $\sigma$-algebra 
of all tail events.
By Kolmogorov's zero--one law, $\P(A) = 0$ or $1$ for $A \in \mathscr{F}_\infty$.

To prove (i), we will show that for any fixed $x_0 \in T$, the random variable
\[ X := \liminf_{r \to 0+} \sup_{x \in T:\, \Delta(x, x_0) \le r}
\frac{|v(x) - v(x_0)|}{r (\log\log(1/r))^{-1/Q}} \]
is measurable with respect to the $\sigma$-algebra $\mathscr{F}_\infty$.
 For any $n \ge 1$ and $x \in T$, let
\[ Y_n(x) = \sum_{m=0}^{n-1} v_m(x) \quad \text{and} \quad 
Z_n(x) = \sum_{m=n}^\infty v_m(x). \]
Note that $v(x) = Y_n(x) + Z_n(x)$ and $Y_n(x) = v([0, n), x)$. 
Consider $n \ge a_0$, where $a_0$ is the constant in Assumption \ref{a1}.
Then by \eqref{Eq:a1-1} with $a = n$ and $b = \infty$, 
for all $x, y \in T$, we have
\begin{align*}
\| Y_n(x) - Y_n(y) \|_{L^2} \le c_0 \sum_{j=1}^k n^{\gamma_j} |x_j-y_j|.
\end{align*}
Since $Y_n$ is Gaussian, this implies that for any $p \ge 2$, there is a finite constant $C$ 
which depends on $n$ such that for all $x, y \in T$,
\[ \E(|Y_n(x) - Y_n(y)|^p) \le C |x-y|^p. \]
Then, by Kolmogorov's continuity theorem, for any $0 < \beta < 1$, 
with probability one, 
$x \mapsto Y_n(x)$ is $\beta$-H\"older continuous on $T$.
If we choose $\beta$ such that $\max \{\alpha_1, \dots, \alpha_k \} < \beta < 1$, 
then for a.e.~$\omega$, there exists $C = C(\omega, n) < \infty$ such that
for all $x, y \in T$,
\begin{equation}\label{Eq:Y_n}
|Y_n(x) - Y_n(y)| \le C \sum_{j=1}^k |x_j - y_j|^\beta
\end{equation}
This implies that for any $x_0 \in T$,
\begin{equation*}
\lim_{r \to 0+} \sup_{x \in T:\, \Delta(x, x_0) \le r}
\frac{|Y_n(x) - Y_n(x_0)|}{r (\log\log(1/r))^{-1/Q}}
= 0 \quad \text{a.s.}
\end{equation*}
Since $v = Y_n + Z_n$, we have 
\[
X = \liminf_{r \to 0+} \sup_{x \in T:\, \Delta(x, x_0) \le r}
 \frac{|Z_n(x) - Z_n(x_0)|}{r (\log\log(1/r))^{-1/Q}} \quad \text{a.s.}
\]
This means that $X$ is an $\mathscr{F}_n$-measurable random variable, and this is 
true for arbitrary $n \ge a_0$. Therefore, $X$ is $\mathscr{F}_\infty$-measurable.
By Kolmogorov's zero--one law, this implies (i).

For (ii) and (iii), notice that the limits on the left-hand side of 
\eqref{0-1law:2} and \eqref{0-1law:3} both exist by monotonicity.
Moreover, similarly to the above arguments, 
for any $n \ge a_0$, by \eqref{Eq:Y_n}, we have
\begin{equation}\label{LIL:Y_n}
\lim_{r \to 0+} \sup_{x \in T:\, 0 <\Delta(x, x_0) \le r}
\frac{|Y_n(x) - Y_n(x_0)|}{\Delta(x, x_0) \sqrt{\log\log(\Delta(x, x_0)^{-1})}}
= 0 \quad \text{a.s.}
\end{equation}
and
\begin{equation}\label{MC:Y_n}
\lim_{r \to 0+} \sup_{x, y \in T:\,0 < \Delta(x, y) \le r}
\frac{|Y_n(x) - Y_n(y)|}{\Delta(x, y) \sqrt{\log(\Delta(x, y)^{-1})}}
= 0 \quad \text{a.s.}
\end{equation}
It follows that the left-hand side of \eqref{0-1law:2} and \eqref{0-1law:3} are 
$\mathscr{F}_\infty$-measurable random variables and therefore are 
constants a.s.~ by Kolmogorov's zero--one law.

Finally, to see that \eqref{0-1law:2} and \eqref{0-1law:3} also hold when $\Delta$ 
is replaced by $d$, note that for each $n \ge a_0$, by \eqref{Eq:Y_n} and Assumption \ref{a3}, 
for a.e.~$\omega$, there exists $C = C(\omega, n) < \infty$ such that
for all $x, y \in T$ with $d(x, y) \le r$, we have
\[ |Y_n(x) - Y_n(y)| \le C r^{\beta - \alpha^*} d(x, y), \]
where $\alpha^* = \max\{\alpha_1, \dots, \alpha_k\}$.
Therefore, \eqref{LIL:Y_n} and \eqref{MC:Y_n} hold with $\Delta$ being replaced by 
$d$, and the desired result follows from the fact that $v = Y_n + Z_n$ and 
Kolmogorov's zero--one law.
\end{proof}

\section{Chung-type law of the iterated logarithm}

This section is devoted to proving the Chung-type LIL.
It is well known that the small ball probability is a key step in establishing 
Chung's LIL (\cite{MR95, LS01}).
The following lemma is a reformulation of Talagrand's 
lower bound for small ball probabilities of Gaussian processes 
\cite[Lemma 2.2]{T95}. See Ledoux \cite[p.257]{L} for a proof.

\begin{lemma}\label{Lem:T95}
Let $\{ X(t), t \in S \}$ be a separable, real-valued, mean-zero Gaussian process
indexed by a bounded set $S$ with canonical metric 
$d_X(s, t) = \|X(s) - X(t)\|_{L^2}$.
Let $N(S, d_X, \eps)$ denote the  smallest number of $d_X$-balls of 
radius $\eps$ needed to cover the set $S$. 
Suppose there is a decreasing function $\psi : (0, \delta) \to (0, \infty)$ 
such that 
$N(S, d_X, \eps) \le \psi(\eps)$ for all $\eps \in (0, \delta)$ and there are constants 
$a_2 \ge a_1 > 1$ such that for all $\eps \in (0, \delta)$,
\begin{equation}\label{Eq:Covering}
a_1 \psi (\eps) \le \psi (\eps/2) \le a_2 \psi (\eps).
\end{equation}
Then, there is a finite constant $K$ depending only on $a_1$ and $a_2$ such that 
for all $u \in (0, \delta)$,
\begin{equation}\label{Eq:SB1}
\P\bigg\{ \sup_{s, t \in S} |X(s) - X(t)| \le u \bigg\} \ge \exp \left(-K \psi(u) \right).
\end{equation}
\end{lemma}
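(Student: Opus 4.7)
The plan is to prove the small-ball lower bound via a multiscale chaining argument. First, I would set a geometric scale $\eps_k = u \, 2^{-k}$ for $k \ge 0$ and choose nets $T_k \subset S$ of cardinality at most $N(S, d_X, \eps_k) \le \psi(\eps_k)$. For each $s \in S$, fix a nearest-point projection $\pi_k(s) \in T_k$, so that $d_X(s, \pi_k(s)) \le \eps_k$, and hence $\|X(\pi_k(s)) - X(\pi_{k-1}(s))\|_{L^2} \le \eps_k + \eps_{k-1} \le 3\eps_{k-1}/2$. Writing $X(s) - X(\pi_0(s)) = \sum_{k \ge 1}\bigl[X(\pi_k(s)) - X(\pi_{k-1}(s))\bigr]$ turns the supremum over $S$ into a countable set of finite-dimensional maxima, one per scale.

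Second, I would observe that the small-ball event $\{\sup_{s,t \in S}|X(s)-X(t)| \le u\}$ contains the intersection of the events
\[
E_0 = \Bigl\{\sup_{s,t \in T_0}|X(s)-X(t)| \le u/4\Bigr\}, \qquad
E_k = \Bigl\{\sup_{s \in S}|X(\pi_k(s))-X(\pi_{k-1}(s))| \le u \, 2^{-k-2}\Bigr\} \ (k \ge 1),
\]
since then the telescoping bound gives $\sup_{s \in S}|X(s) - X(\pi_0(s))| \le u/4$, absorbing the rest of $u$. Each $E_k$ is a symmetric convex event in a finite-dimensional centered Gaussian vector, involving at most $\psi(\eps_k)^2$ coordinates of standard deviation on the order of $\eps_{k-1}$.

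Third, I would apply the Gaussian correlation inequality to obtain $\P(\bigcap_{k\ge 0} E_k) \ge \prod_{k \ge 0}\P(E_k)$, and then invoke the elementary one-dimensional small-ball bound $\P(|\xi| \le t\sigma) \ge c\min(t,1)$ together with another application of correlation (or a union-complement estimate) to lower-bound each $\P(E_k)$ by an expression of the form $\exp(-C \psi(\eps_k)\log(2^{k}+1))$. The doubling hypothesis \eqref{Eq:Covering} is crucial here: the upper doubling $\psi(\eps/2) \le a_2\psi(\eps)$ forces $\psi(\eps_k) \le a_2^{\,k}\psi(u)$, so summing the logarithms gives a convergent geometric-type series dominated by a constant multiple of $\psi(u)$; the lower doubling $\psi(\eps/2) \ge a_1\psi(\eps)$ with $a_1 > 1$ ensures that the contribution from the scale-$0$ term $E_0$ is itself bounded by a multiple of $\psi(u)$.

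The main obstacle is handling the dependence between scales in a way that produces the correct final constant $K = K(a_1, a_2)$ depending only on the doubling constants. Invoking the Gaussian correlation inequality (only established in full generality in 2014) makes this clean, but Talagrand's original approach instead splits the Gaussian process into independent increments via a conditional-covariance construction and then uses Anderson's inequality at each scale; reproducing either route requires some care so that the tail scales $k \to \infty$ do not accumulate, which is exactly where the sub-geometric growth of $\psi(\eps_k)$ forced by the doubling condition saves the argument.
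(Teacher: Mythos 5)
The skeleton you chose (multiscale nets, a telescoping decomposition, and a correlation inequality to factor the small-ball event into per-scale events) is the right family of ideas --- the proof the paper merely cites (Talagrand \cite{T95}, as presented by Ledoux \cite{L}) is indeed a chaining argument combined with \v{S}id\'ak's inequality --- but the quantitative core of your proposal fails, and it fails because you use the doubling hypothesis in the wrong direction. Since $a_2\ge a_1>1$ and $\psi$ majorizes covering numbers, $\psi(\eps_k)=\psi(u2^{-k})\ge a_1^{k}\psi(u)$ \emph{grows} geometrically in $k$; your assertion that $\psi(\eps_k)\le a_2^{k}\psi(u)$ makes $\sum_k \psi(\eps_k)\log(2^k+1)$ ``a convergent geometric-type series dominated by a constant multiple of $\psi(u)$'' is backwards. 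Concretely, with your allowances $u2^{-k-2}$ and increments of standard deviation up to $3u2^{-k}$, the best per-constraint bound you can guarantee is $\P\{|Z|\le 1/12\}$, a fixed constant $<1$; with at least order $\psi(\eps_k)$ constraints at scale $k$ (in fact up to $\psi(\eps_k)^2$ with your nearest-point projections), your \v{S}id\'ak/correlation bound gives $-\log\P(E_k)\gtrsim a_1^{k}\psi(u)$, the series over $k$ diverges, and the product of your per-scale lower bounds is $0$ --- the argument is vacuous. This is not bookkeeping: demanding that fine-scale increments stay within a fixed fraction of their standard deviation is intrinsically too expensive (for a process saturating the entropy bound, $\P(E_k)$ really is exponentially small in $\psi(\eps_k)$). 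The actual proof gives scale $k$ the allowance $\lambda_k\eps_k$ with $\lambda_k\asymp\sqrt{(1+\log a_2)\,k}$, so the per-constraint failure probability $e^{-\lambda_k^2/2}$ beats the geometric growth $a_2^{k}$ of $\psi(\eps_k)$ and the total cost is $\le C(a_1,a_2)\psi(u)$; the price is a chaining budget of order $C(a_2)\,u$ rather than $u$, which is repaired at the end by running the argument at level $u/C$ and using $\psi(u/C)\le a_2^{\lceil\log_2 C\rceil}\psi(u)$. Both of these ingredients (growing $\lambda_k$ and the final rescaling, which is where $K$ acquires its dependence on $a_1,a_2$) are missing from your plan.

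Two further gaps. Your single coarse event $E_0$ asks all pairs of the $u$-net --- which may be separated by distances comparable to the diameter $D$ of $S$, a quantity not controlled by $a_1$, $a_2$ or $\psi(u)$ --- to be within $u/4$; its cost is of order $\psi(u)^2\log(D/u)$, and the lower doubling $a_1>1$ says nothing about this one event. In the correct proof one also chains \emph{upward} from scale $u$ to the diameter, and $a_1>1$ is used precisely there, to make the coarse-scale costs a geometrically decreasing series dominated by $\psi(u)$. Moreover, to obtain an exponent proportional to $\psi(u)$ rather than $\psi(u)^2$ you must chain through the nets (apply $\pi_{k-1}$ to the points of $T_k$), giving $O(\psi(\eps_k))$ constraints per scale, instead of projecting each $s\in S$ independently at every scale. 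Finally, the full Gaussian correlation inequality is unnecessary here: every event involved is a two-sided bound on a single centered Gaussian variable, so the classical \v{S}id\'ak inequality already yields the product lower bound, which is what the cited proof uses.
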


Recall the metric $\Delta$ 
defined in \eqref{Def:Delta}. 
Let $B_\Delta(x, r) = \{ y \in \R^k : \Delta(x, y) \le r \}$ be
the closed $\Delta$-ball centered at $x$ of radius $r$.
In the proposition below, we prove optimal bounds for the small ball probability 
of $v$ around a fixed point $x_0$, which generalizes Theorem 5.1 in \cite{X09}
(or Lemma 2.2 of \cite{LX10}), where the case of $x_0 = 0$ and $r = 1$ was considered.

\begin{proposition}\label{Prop:SB}
Under Assumptions \ref{a1} and \ref{a2}, there exist positive finite constants $C_1, C_2$
and $r_0 > 0$ small such that for any $0 < u < r \le r_0$ and $x_0 \in T$ 
with $B_\Delta(x_0, r) \subset T$, we have
\begin{equation}\label{SB}
\exp\left(-C_1 (r/u)^Q\right) \le 
\P\bigg\{ \sup_{x \in B_\Delta(x_0, r)} |v(x) - v(x_0)| \le u \bigg\}
\le \exp\left(-C_2 (r/u)^Q\right).
\end{equation}
\end{proposition}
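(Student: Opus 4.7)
The proposition is a two-sided small-ball estimate centered at an arbitrary $x_0$, generalizing the case $x_0 = 0$, $r = 1$ from \cite{X09,LX10}. The plan is to treat the two inequalities by complementary techniques: the lower bound will follow from a metric entropy computation for $B_\Delta(x_0, r)$ combined with Talagrand's Lemma \ref{Lem:T95}, and the upper bound will follow from a sequential Gaussian conditioning argument on a separated anisotropic lattice inside $B_\Delta(x_0, r)$, driven by the strong LND property (Assumption \ref{a2}).

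For the lower bound, I first estimate the metric entropy. The ball $B_\Delta(x_0, \rho)$ is comparable to the anisotropic rectangle $\prod_{j=1}^k [x_{0,j} - \rho^{1/\alpha_j}, x_{0,j} + \rho^{1/\alpha_j}]$, so a standard anisotropic covering argument yields $N(B_\Delta(x_0, r), \Delta, \eps) \le C(r/\eps)^Q$ for $0 < \eps \le r \le r_0$. By Lemma \ref{Lem:DMX17}, $d \le c_1 \Delta$ on small scales, so the same bound (up to absorbing $c_1$ into the constant) holds for $N(B_\Delta(x_0, r), d, \cdot)$. Setting $\psi(\eps) = C(r/\eps)^Q$, the doubling condition \eqref{Eq:Covering} holds with $a_1 = a_2 = 2^Q > 1$, so Lemma \ref{Lem:T95} applied to $v$ on $S = B_\Delta(x_0, r)$ gives
\[
\P\Big\{\sup_{x,y \in S} |v(x) - v(y)| \le u\Big\} \ge \exp\big(-K C (r/u)^Q\big),
\]
and the lower bound in \eqref{SB} follows because this event is contained in $\{\sup_{x \in S} |v(x) - v(x_0)| \le u\}$.

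For the upper bound, I select points $x^1, \ldots, x^N \in B_\Delta(x_0, r)$ on the anisotropic lattice with coordinate spacing $(cu)^{1/\alpha_j}$ in direction $j$, retaining only those with $\Delta(x^i, x_0) \ge cu$ and $\Delta(x^i, 0) \ge cu$. Since $\Delta$ satisfies the triangle inequality, pairwise distances among the retained $x^i$ are automatically $\ge cu$. Conditioning sequentially on $\mathcal{F}_i = \sigma(v(x_0), v(x^1), \ldots, v(x^{i-1}))$ and applying Assumption \ref{a2} (with $x_0, x^1, \ldots, x^{i-1}$ as conditioning points and the implicit $x^0 = 0$),
\[
\sigma_i^2 := \mathrm{Var}\big(v(x^i) \mid \mathcal{F}_i\big) \ge c_2 \min\big\{\Delta^2(x^i, 0),\, \Delta^2(x^i, x_0),\, \Delta^2(x^i, x^1), \ldots, \Delta^2(x^i, x^{i-1})\big\} \ge c_2 (cu)^2,
\]
which is deterministic by joint Gaussianity. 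The conditional density of $v(x^i) - v(x_0)$ given $\mathcal{F}_i$ is bounded by $(2\pi c_2)^{-1/2}(cu)^{-1}$, so $\P\{|v(x^i) - v(x_0)| \le u \mid \mathcal{F}_i\} \le 2/(c\sqrt{2\pi c_2})$ almost surely. Choosing $c$ large enough that this ratio is a constant $\varrho < 1$ and iterating the tower property,
\[
\P\Big\{\sup_{x \in B_\Delta(x_0, r)} |v(x) - v(x_0)| \le u\Big\} \le \P\big\{\max_i |v(x^i) - v(x_0)| \le u\big\} \le \varrho^N \le \exp\big(-C_2 (r/u)^Q\big),
\]
where the final step uses $N \ge c^\ast (r/u)^Q$.

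The main technical obstacle will be the uniform packing count $N \asymp (r/u)^Q$, especially in the borderline case where $0$ lies in or near $B_\Delta(x_0, r)$. Each of $B_\Delta(0, cu)$ and $B_\Delta(x_0, cu)$ has anisotropic volume $\asymp (cu)^Q$, comparable to a single lattice cell, so the excision removes only $O(1)$ lattice points, preserving $N \ge c^\ast (r/u)^Q$ once $u$ is small enough relative to $r$; the complementary regime $u \asymp r$ is covered trivially by shrinking $C_2$. All resulting constants depend only on $c_0, c_2, Q$ and the separation parameter $c$, and are therefore uniform over $x_0 \in T$ and $r \le r_0$ as required.
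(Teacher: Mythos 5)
Your proposal follows essentially the same route as the paper: the lower bound via the entropy estimate $N(B_\Delta(x_0,r),d,\eps)\le C(r/\eps)^Q$ together with Talagrand's small-ball lemma (Lemma \ref{Lem:T95}), and the upper bound via sequential conditioning on a $\Delta$-separated anisotropic lattice using Assumption \ref{a2} and a conditional Gaussian (Anderson-type) bound; the argument is correct. The only point glossed over is the regime where $r/u$ is of order one, in which your lattice (with spacing $(cu)^{1/\alpha_j}$, $c$ large, after excising points near $x_0$ and near the origin) may be empty: the paper avoids this by taking spacing $\big(r/(kn)\big)^{1/\alpha_j}$ with $n-1<r/u\le n$, $n\ge 2$, inside a rectangle placed in the orthant of $x_0$ so the chosen vertices automatically stay away from both the origin and $x_0$, whereas your ``shrink $C_2$'' fix still requires a short one-point estimate, e.g.\ $\mathrm{Var}\big(v(x^1)\,\big|\,v(x_0)\big)\ge c_2\min\{\Delta^2(x^1,0),\Delta^2(x^1,x_0)\}\gtrsim r^2$ for a suitably chosen corner point $x^1\in B_\Delta(x_0,r)$, to see that the probability is bounded away from $1$ uniformly there.
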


\begin{proof}
We first prove the lower bound in \eqref{SB}.
Consider the Gaussian random field $\{ v(x), x \in T \}$ and the canonical metric 
$d_v(x, y) = \|v(x) - v(y)\|_{L^2}$.
By Assumption \ref{a1} and Lemma \ref{Lem:DMX17}, 
we can find some small $r_0 > 0$ such that $d_v(x, y) \le c_1 \Delta(x, y)$
for all $x, y \in T$ with $\Delta(x, y) \le r$ and $0 < r \le r_0$.
This implies $N(B_\Delta(x_0, r), d_v, \eps) \le C_0(r/\eps)^Q$ for all $\eps > 0$ small,
where $C_0$ does not depend on $r$ or $\eps$.
Take $S = B_\Delta(x_0, r)$ and $\psi(\eps) = C_0(r/\eps)^Q$. 
Then $\psi$ satisfies \eqref{Eq:Covering} with $a_1 = a_2 = 2^Q > 1$.
Hence,  the lower bound in \eqref{SB} follows from Lemma \ref{Lem:T95}.

The proof of the upper bound in \eqref{SB} is based on Assumption \ref{a2} and a 
conditioning argument. Suppose $0 < u < r \le r_0$ and $B_\Delta(x_0, r) \subset T$. 
Notice that, for $x_0 = (x_{0,1}, \dots, x_{0,k})$, the rectangle $I :=\prod_{j=1}^k [x_{0,j}, 
\,x_{0,j} + (k^{-1}r)^{1/\alpha_j}]$ is contained in $B_\Delta(x_0, r)$.
For simplicity, we consider the case where $x_0$ lies in the orthant 
$[0, \infty)^k$, so that the interior of $I$ does not contain the origin (otherwise, 
in order to retain this latter property for $I$, we can modify the definition of $I$ by 
using the interval $[x_{0,j} - (k^{-1}r)^{1/\alpha_j}, \,x_{0,j}]$ for $x_{0, j} < 0$
and the rest of the proof is similar).
It suffices to prove that 
\begin{equation}\label{Eq:SB_on_I}
\P\bigg\{ \sup_{x \in I} |v(x) - v(x_0)| \le u \bigg\} \le \exp\left(-C_2 (r/u)^Q\right).
\end{equation}
Since $r/u > 1$, we can find an integer $n \ge 2$ 
such that $n-1 < r/u \le n$ (in particular, $n/2 < r/u$).
Divide $I$ into sub-rectangles of side lengths $\big(r/(kn)\big)^{1/\alpha_j}$ ($j = 1, \dots, k$).
The number of sub-rectangles is $N \sim n^Q$.
Let $x_i$ ($1 \le i \le N$) denote the upper-right vertices of the sub-rectangles in any order.
For each $1 \le j \le N$, let
\begin{equation*}
A_j = \bigg\{ \max_{1 \le i \le j} |v(x_i) - v(x_0)| \le u \bigg\}.
\end{equation*}
Then by conditioning,
\begin{equation}\label{Eq:condition}
\P(A_j) = \E\bigg[ {\bf 1}_{A_{j-1}}\, 
\P\Big\{ |v(x_j) - v(x_0)| \le u\, \Big|\, v(x_i) : 0 \le i \le j -1 \Big\} \bigg].
\end{equation}
By Assumption \ref{a2}, the property that the $x_i$'s are separated by a 
$\Delta$-distance of at least $r/(kn)$ and 
that the interior of $I$ does not contain the origin, we have
\begin{align}\label{condvar}
\begin{split}
\mathrm{Var}\big(v(x_j) \big| v(x_i) : 0 \le i \le j-1\big) 
& \ge c_2 \min_{0 \le i \le j-1} \Delta^2(x_j, x_i)  \ge c_2 (r/(kn))^2.
\end{split}
\end{align}
Since the random field $v$ is Gaussian, the conditional distribution of $v(x_j)$ given 
all the $v(x_i)$, with $0 \le i \le j-1$, is a Gaussian distribution with conditional variance 
$\mathrm{Var}(v(x_j)| v(x_i) : 0 \le i \le j-1)$.
Then, \eqref{condvar} and Anderson's inequality \cite[Theorem 2]{A55} imply that
\begin{equation}\label{Eq:cond_prob}
\begin{split}
\P\Big\{ |v(x_j) - v(x_0)| \le u\, \Big|\, v(x_i) : 0 \le i \le j-1 \Big\}
& \le \P\Big\{|Z| \le \frac{u}{\sqrt{c_2}(r/(kn))}\Big\} \le \exp(-C),
\end{split}
\end{equation}
where $Z$ is a standard Gaussian random variable
and the last inequality holds for some constant $C>0$
since $k \le knu/r \le 2k$.
Then, based on \eqref{Eq:condition} and \eqref{Eq:cond_prob}, 
we can use induction to deduce that
\begin{align*}
\P\bigg\{ \max_{1 \le i \le N} |v(x_i) - v(x_0)| \le u \bigg\} = \P(A_N) \le
\exp(-C N).
\end{align*}
Since $N \sim n^Q$ and $n-1< r/u \le n$,
this implies \eqref{Eq:SB_on_I} and completes the proof.
\end{proof}

The next lemma is an isoperimetric inequality for general Gaussian processes.

\begin{lemma}\label{Lem:LT}
\cite[p.302]{LT}
There is a universal constant $K_0$ such that the following statement holds.
Let $S$ be a bounded set and $\{X(s), s \in S\}$ be a separable Gaussian process.
Let $D = \sup\{ d_X(s, t) : s, t \in S \}$ be the diameter of $S$ in metric $d_X$.
Then for any $u > 0$,
\begin{equation*}
\P\bigg\{ \sup_{s, t \in S} |X(s) - X(t)| \ge 
K_0\Big( u + \int_0^D \sqrt{\log N(S, d_X, \eps)} d\eps \Big) \bigg\} 
\le \exp\left(-\frac{u^2}{D^2}\right).
\end{equation*}
\end{lemma}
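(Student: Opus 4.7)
The plan is to derive the bound by combining two classical ingredients: Dudley's entropy integral estimate for the expected supremum of increments of a Gaussian process, and the Borell--Tsirelson--Sudakov (Borell--TIS) Gaussian concentration inequality. Separability of $X$ lets me reduce to a countable dense subset $S_0 \subset S$, replacing the supremum over $S \times S$ by a monotone limit over finite sub-collections, which also handles measurability of the supremum.

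First I would invoke Dudley's theorem to obtain a universal constant $K_1$ with
\[
\E\Big[\sup_{s, t \in S_0} |X(s) - X(t)|\Big] \le K_1 \int_0^D \sqrt{\log N(S, d_X, \eps)}\, d\eps.
\]
This is a standard fact and could simply be cited from a reference such as Ledoux--Talagrand.

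Next I would apply the Borell--TIS concentration inequality to the centered Gaussian family $\{X(s) - X(t) : (s, t) \in S_0 \times S_0\}$, whose pointwise variance is bounded uniformly by $D^2$. This yields, for every $v > 0$,
\[
\P\Big( \sup_{s, t \in S_0} |X(s) - X(t)| \ge \E\Big[\sup_{s, t \in S_0} |X(s) - X(t)|\Big] + v \Big) \le \exp\!\Big(-\frac{v^2}{2D^2}\Big).
\]
Choosing $v = \sqrt{2}\, u$ turns the exponent into $-u^2/D^2$, matching the form in the lemma. Combining the two estimates and taking $K_0$ sufficiently large, for instance $K_0 \ge \max\{K_1, \sqrt{2}\}$, so that $K_0 \big( u + \int_0^D \sqrt{\log N(S, d_X, \eps)}\, d\eps \big)$ dominates both $\sqrt{2}\, u$ and $K_1 \int_0^D \sqrt{\log N(S, d_X, \eps)}\, d\eps$ yields the claimed estimate.

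The main obstacle I anticipate is packaging Borell--TIS correctly: its textbook form concerns $\sup_t X(t)$ for a single Gaussian process, whereas here one needs a supremum of increments. A clean workaround is to fix a base point $s_0 \in S_0$, write
\[
\sup_{s,t \in S_0} |X(s) - X(t)| = \sup_{s \in S_0}\big(X(s) - X(s_0)\big) - \inf_{s \in S_0}\big(X(s) - X(s_0)\big),
\]
and apply Borell--TIS to the centered process $s \mapsto X(s) - X(s_0)$, whose variance is bounded by $D^2$. Once this is arranged, the rest is a direct combination of cited facts and a choice of a sufficiently large universal constant.
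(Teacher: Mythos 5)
Your proposal is correct and follows essentially the same route as the source the paper cites for this lemma (the paper gives no proof of its own, only the reference to Ledoux--Talagrand, where the estimate comes from Gaussian concentration/isoperimetry combined with the Dudley entropy bound): Dudley controls $\E\sup_{s,t}|X(s)-X(t)|$ by $K_1\int_0^D\sqrt{\log N(S,d_X,\eps)}\,d\eps$, Borell--TIS applied to the centered Gaussian family $\{X(s)-X(t):(s,t)\in S\times S\}$ with variance bound $D^2$ gives the tail $\exp(-v^2/(2D^2))$, and $v=\sqrt2\,u$ with $K_0\ge\max\{K_1,\sqrt2\}$ yields the stated inequality. Note that your closing ``workaround'' is unnecessary (and, via sup minus inf, would force a two-sided/union-bound step): the increment family is itself a single centered Gaussian process indexed by $S\times S$, so the textbook Borell--TIS applies directly as in your first formulation.
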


Now, we are ready to prove the Chung-type LIL.

\begin{theorem}\label{Thm:ChungLIL}
Under Assumptions \ref{a1} and \ref{a2}, for any fixed $x_0 \in T$, there exists a 
positive finite constant $\kappa$ which may depend on $x_0$ such that
\begin{equation*}
\liminf_{r \to 0+} \sup_{x \in T:\Delta(x, x_0) \le r}
\frac{|v(x) - v(x_0)|}{r (\log\log(1/r))^{-1/Q}} = \kappa^{1/Q}
\quad \text{a.s.}
\end{equation*}
and $C_2 \le \kappa \le C_1$, where $C_1$ and $C_2$ are the constants in 
Proposition \ref{Prop:SB}. In particular, $\kappa$ coincides with 
the following limit, which is called the small ball constant of $v$ on $\{x \in T: \Delta(x, x_0) \le r\},$ 
if it exists:
\begin{equation}\label{Eq:SBcon}
\kappa = - \lim_{\substack{r \to 0, u/r \to 0}} {\left( \frac{u}{r}\right)}^Q 
\log \P\bigg\{ \sup_{x \in T: \Delta(x, x_0) \le r} |v(x) - v(x_0)| \le u \bigg\}. 
\end{equation}
\end{theorem}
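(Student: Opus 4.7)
The plan is to combine the small ball estimates of Proposition \ref{Prop:SB} with the zero--one law of Lemma \ref{0-1law}(i) and both Borel--Cantelli lemmas, exploiting the independent pieces $v_m$ in the harmonizable decomposition $v = \sum_{m=0}^\infty v_m$ from the proof of Lemma \ref{0-1law}. By Lemma \ref{0-1law}(i) the $\liminf$ equals some a.s.\ constant, which I will write as $\kappa^{1/Q}$, and it suffices to prove $C_2 \le \kappa \le C_1$.

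The lower bound $\kappa \ge C_2$ is the easier direction. Fix $\eta \in (0, C_2)$ and $\theta \in (0, 1)$, and set $r_n = \theta^n$ and $u_n = (C_2 - \eta)^{1/Q} r_n (\log\log(1/r_n))^{-1/Q}$. By the upper bound in \eqref{SB},
\[
\P\bigg\{\sup_{x \in B_\Delta(x_0, r_n)} |v(x) - v(x_0)| \le u_n\bigg\} \le (\log(1/r_n))^{-C_2/(C_2-\eta)},
\]
which is summable in $n$. The first Borel--Cantelli lemma gives the reverse strict inequality for all large $n$; a monotonicity and interpolation argument over $r \in (r_{n+1}, r_n]$ yields $\kappa^{1/Q} \ge \theta(C_2 - \eta)^{1/Q}$, and letting $\theta \uparrow 1$, $\eta \downarrow 0$ gives $\kappa \ge C_2$.

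The upper bound $\kappa \le C_1$ is where I expect the main obstacle. The events $E_n = \{\sup_{B_\Delta(x_0, r_n)}|v(x) - v(x_0)| \le u_n'\}$ with $u_n' = (C_1 + \eta)^{1/Q} r_n (\log\log(1/r_n))^{-1/Q}$ have $\P(E_n)$ bounded below by the lower estimate in \eqref{SB}, so the series of probabilities diverges along any $r_n = \theta^n$; however, they are not independent, so the second Borel--Cantelli lemma does not apply directly. The plan is to pick a sparse sub-sequence $r_{n_k}$ and integers $a_k \uparrow \infty$ (tuned to the scale $r_{n_k}$) and define $w_k(x) = v([a_k, a_{k+1}), x)$; the $w_k$ are then independent by part (a) of Assumption \ref{a1}, and one argues that the small ball lower bound of Proposition \ref{Prop:SB} persists for $w_k$ on $B_\Delta(x_0, r_{n_k})$ (the proof via Lemma \ref{Lem:T95} needs only a metric entropy estimate, which transfers to the $w_k$ once $a_k$ is matched to the scale $r_{n_k}$). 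The second Borel--Cantelli lemma then gives infinitely often occurrence of $B_k = \{\sup_{B_\Delta(x_0, r_{n_k})} |w_k(x) - w_k(x_0)| \le u_{n_k}'/2\}$. One must also show that the complementary piece $v - w_k$ satisfies $\sup_{B_\Delta(x_0, r_{n_k})} |(v - w_k)(x) - (v - w_k)(x_0)| \le u_{n_k}'/2$ eventually a.s., which follows by combining the $L^2$ bounds of \eqref{Eq:a1-1}--\eqref{Eq:a1-2} applied to $v([0, a_k), \cdot)$ and $v([a_{k+1}, \infty), \cdot)$ with Lemma \ref{Lem:LT} to convert $L^2$ control into supremum control. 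Together, $B_k$ i.o.\ and the negligibility of the complement give $E_{n_k}$ i.o., whence $\kappa \le C_1 + \eta$; letting $\eta \downarrow 0$ finishes the step. The main delicacy is the simultaneous choice of $n_k$ and $a_k$ so that both the lower small ball bound for $w_k$ and the negligibility of $v - w_k$ hold with the correct constants.

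For the identification with the small ball constant, if the limit in \eqref{Eq:SBcon} exists and equals $\kappa_0$, then for every $\eps > 0$ the bounds of Proposition \ref{Prop:SB} can be taken with $C_1 = \kappa_0 + \eps$ and $C_2 = \kappa_0 - \eps$ for all sufficiently small $r$ and $u/r$. The previous two steps use only this asymptotic form of the bounds, so $\kappa_0 - \eps \le \kappa \le \kappa_0 + \eps$; letting $\eps \downarrow 0$ yields $\kappa = \kappa_0$.
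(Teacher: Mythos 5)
Your overall architecture is essentially the paper's: the lower estimate for $\kappa$ via the upper small ball bound in \eqref{SB} and the first Borel--Cantelli lemma (your interpolation over $r\in(r_{n+1},r_n]$ matches the paper's argument), and the upper estimate via a decomposition of $v$ into independent spectral bands $w_k=v([a_k,a_{k+1}),\cdot)$ from Assumption \ref{a1}(a), the second Borel--Cantelli lemma along a rapidly decreasing sequence of radii, and Lemma \ref{Lem:LT} to control the complementary piece. Your identification of $\kappa$ with the small ball constant in \eqref{Eq:SBcon} is also the intended one. One harmless deviation: for the small ball lower bound for $w_k$ you re-run the entropy argument of Lemma \ref{Lem:T95} (legitimate, since $d_{w_k}\le d_v$ gives the same covering bound and the same constant $C_1$), whereas the paper gets it in one line from Anderson's inequality, $\P\{\sup|v_n(x)-v_n(x_0)|\le u\}\ge\P\{\sup|v(x)-v(x_0)|\le u\}$, using the independence of the band and its complement.

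The genuine gap is the $\tfrac12$--$\tfrac12$ split of the threshold $u'_{n_k}$ between $w_k$ and $v-w_k$: it destroys the sharp constant. Writing $h(r)=r(\log\log(1/r))^{-1/Q}$, your event $B_k=\{\sup_{B_\Delta(x_0,r_{n_k})}|w_k(x)-w_k(x_0)|\le u'_{n_k}/2\}$ has, from the lower bound in \eqref{SB}, only $\P(B_k)\ge\exp\bigl(-C_1\,2^Q(C_1+\eta)^{-1}\log\log(1/r_{n_k})\bigr)$. Moreover your construction forces $\log\log(1/r_{n_k})\ge(1-o(1))\log k$: you need $a_kr_{n_k}\to0$ and $a_{k+1}r_{n_k}\to\infty$, hence $r_{n_k}/r_{n_{k+1}}\to\infty$, so $\log(1/r_{n_k})$ grows superlinearly in $k$. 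Therefore $\sum_k\P(B_k)=\infty$ requires $2^QC_1\le C_1+\eta$, and the argument as written only yields $\kappa\le 2^QC_1$, not $\kappa\le C_1$; with that loss the identification \eqref{Eq:SBcon} also collapses. The repair is exactly what the paper does: do not reserve a fixed fraction of the threshold for the complement. Since by \eqref{Eq:a1-1} the $L^2$-diameter of $v-w_k$ over the $\Delta$-ball is exponentially small compared with $r_{n_k}$ (e.g.\ of order $r_{n_k}e^{-\delta n^\delta}$ for the paper's choice $\rho_n=\exp(-(n^\delta+n^{1+\delta}))$, $b_n=\exp(n^{1+\delta})$), Lemma \ref{Lem:LT} and Borel--Cantelli give $\sup_{B_\Delta(x_0,r_{n_k})}|(v-w_k)(x)-(v-w_k)(x_0)|\le\zeta\,h(r_{n_k})$ eventually a.s.\ for every $\zeta>0$; one can then allot essentially the full threshold $((1+\delta)C_1)^{1/Q}h(r_{n_k})$ to the band piece, for which the series diverges, and conclude $\kappa\le(1+\delta)C_1$ for every $\delta>0$.
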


\begin{proof}[Proof of Theorem \ref{Thm:ChungLIL}]
Fix $x_0 \in T$. To simplify notations, define $h(r) := r (\log\log(1/r))^{-1/Q}$ and
\[ 
L(r) := \sup_{x \in T: \Delta(x, x_0) \le r}\frac{|v(x) - v(x_0)|}{h(r)}. 
\]
By Lemma \ref{0-1law}, $\liminf_{r\to 0+}L(r) = \kappa_1$ a.s.~
for some constant $0 \le \kappa_1 \le \infty$. 
To prove the theorem, we will show that
\begin{equation}\label{Chung:LB}
\liminf_{r\to 0+}L(r) \ge C_2^{1/Q} \quad \text{a.s.}
\end{equation}
and 
\begin{equation}\label{Chung:UB}
\liminf_{r\to 0+}L(r) \le C_1^{1/Q} \quad \text{a.s.}
\end{equation}

We first prove the lower bound \eqref{Chung:LB}. Let $a > 1$ be a constant.
For each $n \ge 1$, let $r_n = a^{-n}$.
Consider a constant $K$ such that $0 < K < a^{-1}C_2^{1/Q}$ and 
consider the event
\[
A_n = \bigg\{ \sup_{x \in T: \Delta(x, x_0) \le r_n} |v(x) - v(x_0)| \le K h(r_{n-1}) \bigg\}.
\]
By the small ball probability estimates in Proposition \ref{Prop:SB},
\begin{align*}
\P(A_n) 
&\le \exp\left(-C_2 (aK)^{-Q} \log\log(1/r_{n-1})\right) \\
&= ((n-1)\log a)^{-C_2 (aK)^{-Q}}. 
\end{align*}
Then $\sum_{n=1}^\infty \P(A_n) < \infty$ since $C_2 (aK)^{-Q} > 1$. 
By using the Borel--Cantelli lemma and letting 
$K \uparrow a^{-1}C_2^{1/Q}$ along a rational sequence, we get that
\begin{equation}\label{Chung:LB-n}
\liminf_{n \to \infty} \sup_{x \in T: \Delta(x, x_0) \le r_n}
\frac{|v(x) - v(x_0)|}{h(r_{n-1})}
\ge a^{-1}C_2^{1/Q} \quad \text{a.s.}
\end{equation}
Note that $h$ is increasing for $r > 0$ small.
For any $r > 0$ small, we can find $n$ large enough such that $r_n \le r \le r_{n-1}$
and $h(r) \le h(r_{n-1})$. Then, by \eqref{Chung:LB-n}, we have
\[
\liminf_{r \to 0+} L(r) \ge a^{-1} C_2^{1/Q} \quad \text{a.s.},
\]
which implies \eqref{Chung:LB} since $a > 1$ is arbitrary.

Now, we turn to the proof of the upper bound \eqref{Chung:UB}. 
It relies on Assumption \ref{a1} which allows us to create independence.
Fix $\delta > 0$ small.
For any $n \ge 1$, let $\rho_n = \exp(-(n^\delta+n^{1+\delta}))$ and 
$b_n = \exp(n^{1+\delta})$.
For any $x \in T$, let $v_n(x) = v([b_n, b_{n+1}), x)$ and 
$\widetilde{v}_n(x) = v(\R_+ \setminus [b_n, b_{n+1}), x)$
so that $v(x) = v_n(x) + \widetilde{v}_n(x)$.
By Assumption \ref{a1}(a), the processes $v_1, v_2, \dots$ are 
independent, and for each $n \ge 1$, $v_n$ and $\widetilde{v}_n$ are also independent.

Let $K := ((1+\delta) C_1)^{1/Q}$.
Since $v(x) = v_n(x) + \widetilde{v}_n(x)$ and $v_n$ and $\widetilde{v}_n$ are 
independent, we can apply Anderson's inequality \cite[Theorem 2]{A55} to get that
\begin{align*}
\P\bigg\{ \sup_{x \in T: \Delta(x, x_0) \le \rho_n} |v_n(x) - v_n(x_0)| \le K h(\rho_n) \bigg\}
\ge \P\bigg\{ \sup_{x \in T: \Delta(x, x_0) \le \rho_n} |v(x) - v(x_0)| \le K h(\rho_n) \bigg\}.
\end{align*}
Then, by Proposition \ref{Prop:SB}, the right-hand side is at least
\begin{align*}
\exp\left(-C_1 K^{-Q} \log\log(1/\rho_n)\right) 
&= (n^\delta + n^{1+\delta})^{-C_1 K^{-Q}}\\
& \ge (2n^{1+\delta})^{-C_1K^{-Q}}.
\end{align*}
Since $(1+\delta)C_1 K^{-Q} = 1$, we have
\begin{align*}
\sum_{n=1}^\infty 
\P\bigg\{ \sup_{x \in T: \Delta(x, x_0) \le \rho_n} |v_n(x) - v_n(x_0)| \le K h(\rho_n) \bigg\}
= \infty.
\end{align*}
Since $v_1, v_2, \dots$ are independent,  the second Borel--Cantelli lemma implies
\begin{equation}\label{Eq:v}
\liminf_{n \to \infty} \sup_{x \in T: \Delta(x, x_0) \le \rho_n}
\frac{|v_n(x) - v_n(x_0)|}{h(\rho_n)} \le 
((1+\delta)C_1)^{1/Q} \quad \text{a.s.}
\end{equation}

To complete the proof of \eqref{Chung:UB}, we claim that
\begin{equation}\label{Eq:tildev}
\limsup_{n \to \infty} \sup_{x \in T: \Delta(x, x_0) \le \rho_n} 
\frac{|\widetilde{v}_n(x) - \widetilde{v}_n(x_0)|}{h(\rho_n)} = 0 \quad \text{a.s.}
\end{equation}
We prove this by using Lemma \ref{Lem:LT}. Consider the process $\widetilde{v}_n$ 
on the set $S_n := B_\Delta(x_0, \rho_n)$.
By \eqref{Eq:a1-1} of Assumption \ref{a1}, for all $x, y \in S_n$,
\begin{equation}\label{Chung:Eq1}
\|\widetilde{v}_n(x) - \widetilde{v}_n(y)\|_{L^2} 
\le c_0 \bigg( \sum_{j=1}^k b_n^{\gamma_j} |x_j - y_j| + b_{n+1}^{-1} \bigg).
\end{equation}
Recall that $\gamma_j = \alpha_j^{-1} - 1$. Let $D_n$ be the diameter of $S_n$ in the metric 
$d_{\widetilde{v}_n}$. Then
\begin{equation}\label{Chung:Eq2}
D_n \le 
C \rho_n \bigg(  \sum_{j=1}^k (b_n \rho_n)^{\alpha_j^{-1} -1}  + (b_{n+1}\rho_n)^{-1} \bigg).
\end{equation}
Note that $b_n \rho_n = \exp(-n^\delta)$.
Also, by the mean value theorem, 
$(n+1)^{1+\delta} - n^{1+\delta} \ge (1+\delta)n^\delta$, which implies
$b_{n+1} \rho_n \ge \exp(\delta n^\delta)$.
Provided $\delta \le \min \{ \alpha_1^{-1} - 1, \dots, \alpha_k^{-1} -1 \}$, 
we have
\begin{equation}\label{Chung:Eq3}
D_n \le C \rho_n \exp(-\delta n^\delta).
\end{equation}
Also, by the independence of $v_n$ and $ \widetilde{v}_n$ and Lemma \ref{Lem:DMX17}, 
for $n$ large, for all $x, y \in S_n$, 
\[
\|\widetilde{v}_n(x) - \widetilde{v}_n(y)\|_{L^2} \le \|v(x) - v(y)\|_{L^2} \le c_1 \Delta(x, y).
\]
This implies $N(S_n, d_{\widetilde{v}_n}, \eps) \le C(\rho_n/\eps)^Q$ for $\eps > 0$ small.
Then for $n$ large,
\begin{align*}
\int_0^{D_n} \sqrt{\log N(S_n, d_{\widetilde{v}_n}, \eps)} \, d\eps
& \le C \int_0^{C \rho_n \exp(-\delta n^\delta)} \sqrt{\log(\rho_n/\eps)}\,d\eps\\
& = C \rho_n \int_0^{C \exp(-\delta n^\delta)} 
\sqrt{\log(1/ \eps)} \, d\eps\\
& \le C \rho_n \exp(-\delta n^\delta) \sqrt{\delta n^\delta}.
\end{align*}
The last inequality can be verified using the change of variable $\eps = e^{-u^2}$ 
and the elementary inequality $\int_x^\infty u^2 e^{-u^2} \, du \le C x e^{-x^2}$
for $x$ large. Let $\zeta > 0$.
Then for $n$ large, we have
\[ 
2 K_0 \zeta h(\rho_n) \ge 
K_0 \Big(\zeta h(\rho_n) + \int_0^{D_n} \sqrt{\log N(S_n, d_{\widetilde{v}_n}, \eps)}\, d\eps\Big),
\]
where $K_0$ is the universal constant in Lemma \ref{Lem:LT}.
Then, by that lemma and \eqref{Chung:Eq3}, we have
\begin{align*}
\P\bigg\{ \sup_{x \in T: \Delta(x, x_0) \le \rho_n} |\widetilde{v}_n(x) - \widetilde{v}_n(x_0)| 
\ge 2K_0 \zeta h(\rho_n) \bigg\}
& \le \exp\bigg(-\frac{\zeta^2 h(\rho_n)^2}{D_n^2}\bigg)\\
& \le \exp\bigg( - \frac{\zeta^2 \exp(2\delta n^\delta)}
{C^2(\log(n^\delta + n^{1+\delta}))^{2/Q}} \bigg).
\end{align*}
Hence
\[ 
\sum_{n=1}^\infty \P\bigg\{ \sup_{x \in T: \Delta(x, x_0) \le \rho_n} 
|\widetilde{v}_n(x) - \widetilde{v}_n(x_0)| \ge 2K_0 \zeta h(\rho_n) \bigg\} < \infty.
\]
By the Borel--Cantelli lemma,
\[ \limsup_{n \to \infty} \sup_{x \in T: \Delta(x, x_0) \le \rho_n} 
\frac{|\widetilde v_n(x) - \widetilde v_n(x_0)|}{h(\rho_n)} \le 2K_0 \zeta \quad 
\text{a.s.} \]
Since $\zeta > 0$ is arbitrary, we get \eqref{Eq:tildev}.

Finally, recall that $v(x) = v_n(x) + \widetilde{v}_n(x)$.
Combining \eqref{Eq:v} and \eqref{Eq:tildev} yields
\[
\liminf_{r \to 0+} L(r) \le ((1+\delta) C_1)^{1/Q} \quad \text{a.s.}
\]
Since $\delta > 0$ is arbitrary, we get \eqref{Chung:UB}.
The proof of Theorem \ref{Thm:ChungLIL} is complete.
\end{proof}

\section{The exact local modulus of continuity}

In this section, we are going to prove the exact local modulus of continuity,
which takes the form of the ordinary LIL.
First, let us recall the following result of Talagrand \cite[Theorem 2.4]{T94}.

\begin{lemma}\label{T94}
Let $\{ X(t), t \in S \}$ be a mean-zero continuous Gaussian process.
Let
\[\sigma^2 := \sup_{t \in S}\|X(t)\|_{L^2}^2.\]
Consider the canonical metric $d_X$ on $S$ defined by 
$d_X(s, t) = \|X(s) - X(t)\|_{L^2}$.
Assume that for some constant $M > \sigma$, some $p > 0$ and 
some $0 < \eps_0 \le \sigma$, we have
\begin{equation*}
N(S, d_X, \eps) \le (M/\eps)^p \quad \text{for all } \eps < \eps_0.
\end{equation*}
Then for $u > \sigma^2[(1+\sqrt{p})/\eps_0]$, we have
\begin{equation*}
\P\bigg\{ \sup_{t \in S} X(t) \ge u \bigg\} 
\le \left( \frac{KMu}{\sqrt{p}\,\sigma^2} \right)^p \Phi\left( \frac{u}{\sigma} \right),
\end{equation*}
where $\Phi(x) = (2\pi)^{-1/2} \int_x^\infty e^{-y^2/2} \, dy$
and $K$ is a universal constant.
\end{lemma}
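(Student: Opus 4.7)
The plan is to combine a union bound over an $\eps$-net with the Gaussian tail estimate for each net point and the Borell--Tsirelson--Ibragimov--Sudakov (BTIS) inequality to control the oscillation inside each ball. Since $N(S, d_X, \eps) \le (M/\eps)^p$, the polynomial factor in the conclusion should emerge from the union bound at a scale $\eps$ chosen to match the target prefactor.

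First I would fix $\eps \in (0, \eps_0)$ (to be optimized later) and pick a minimal $\eps$-net $\{t_i\}_{i=1}^N$ of $(S, d_X)$ with $N \le (M/\eps)^p$. Setting $M_i := \sup_{t \in B_{d_X}(t_i, \eps)} X(t)$, one has $\sup_{t \in S} X(t) = \max_{i \le N} M_i$. On each ball the maximal $L^2$-norm of $X$ is at most $\sigma$, and Dudley's entropy bound together with the covering hypothesis give $\E M_i \le C\sqrt{p}\,\eps\sqrt{\log(M/\eps)}$, so BTIS yields
\[
\P\{M_i \ge u\} \le \exp\bigl(-(u - \E M_i)^2/(2\sigma^2)\bigr).
\]
A union bound over $i \le N$, followed by expanding the square, then produces a bound of the form $(M/\eps)^p \exp(-u^2/(2\sigma^2)) \exp(u\, \E M_i/\sigma^2 - (\E M_i)^2/(2\sigma^2))$.

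The next step is to choose $\eps \asymp \sqrt{p}\,\sigma^2/u$. This makes $(M/\eps)^p$ match the target prefactor $(KMu/(\sqrt{p}\,\sigma^2))^p$, and the hypothesis $u > \sigma^2(1+\sqrt{p})/\eps_0$ is precisely the condition that keeps $\eps < \eps_0$, so that the polynomial covering bound is in force. With this scale, $u\,\E M_i/\sigma^2$ is of order $p \log(Mu/(\sqrt{p}\,\sigma^2))$, and the standard Mills-ratio estimate $\Phi(u/\sigma) \ge c(\sigma/u)\exp(-u^2/(2\sigma^2))$ for large $u/\sigma$ converts the residual Gaussian factor into $\Phi(u/\sigma)$, at the cost of an extra factor $u/\sigma$ which is absorbed into the prefactor.

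The main obstacle is the delicate bookkeeping in the last step: the exponential correction $\exp(u\,\E M_i/\sigma^2)$ must be absorbed into $(KMu/(\sqrt{p}\,\sigma^2))^p$ without losing a power of $p$ or introducing a spurious logarithm, which requires the Dudley estimate for $\E M_i$ to be executed with the sharp constant from $\int_0^\eps \sqrt{p\log(M/v)}\,dv$. Alternatively, one could follow Talagrand's original two-scale argument in \cite{T94}, which is specifically tailored to produce the optimal prefactor directly; since the present paper only needs the stated inequality as a black box, invoking the cited lemma is the efficient route.
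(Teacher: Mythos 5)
Two remarks, one about context and one about substance. The paper does not prove this lemma: it is quoted as a black box from \cite[Theorem 2.4]{T94}, so your closing suggestion that invoking the citation is the efficient route is exactly what the authors do. Judged as a self-contained argument, however, your net/Borell--TIS sketch has a genuine gap beyond ``delicate bookkeeping''. With a minimal $\eps$-net, Borell--TIS and a union bound you get a bound of the form $(M/\eps)^p \exp(-(u-\bar m)^2/(2\sigma^2))$ with $\bar m := \max_i \E M_i$, and the choice $\eps = \sqrt{p}\,\sigma^2/u$ is indeed admissible precisely because $u > \sigma^2(1+\sqrt p)/\eps_0$. But note first that your stated order for the correction is off: Dudley gives $\bar m \lesssim \sqrt{p}\,\eps\sqrt{\log(M/\eps)}$, hence $u\bar m/\sigma^2 \lesssim p\sqrt{\log x}$ with $x := Mu/(\sqrt p\,\sigma^2)$, not $p\log x$; if it really were $Cp\log x$ with an unspecified $C>1$, the factor $e^{u\bar m/\sigma^2} = x^{Cp}$ could never be absorbed into $(Kx)^p$. (With the correct $p\sqrt{\log x}$ order the absorption is easy, via $C\sqrt y \le y + C^2/4$.)

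The more serious problem is the Mills-ratio step. Converting $\exp(-u^2/(2\sigma^2))$ into $\Phi(u/\sigma)$ costs a factor of order $u/\sigma$, and the only comparison available is $u/\sigma \le \sqrt p\, x$ with $x$ unbounded in $u$. For $p \ge 1$ this can be pushed into the prefactor (using $x \le x^p$ and $\sqrt p \le e^{p/2}$), but the lemma is stated for every $p > 0$, and for $0 < p < 1$ you would need $K \gtrsim x^{(1-p)/p}$, which is not a universal constant. The structural reason is that Borell--TIS has already thrown away the one-point Gaussian factor $\sigma/u$; to recover the stated form with $\Phi(u/\sigma)$ for all $p>0$ the leading term must be a single-point probability, e.g.\ via a one-step discretization $\P\{M_i \ge u\} \le \P\{X(t_i) \ge (1-\eta)u\} + \P\{\sup_{t \in B_i}(X(t)-X(t_i)) \ge \eta u\}$ with a carefully balanced $\eta$, which is essentially Talagrand's own, more delicate, argument. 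For the application in this paper the gap is harmless (there $p = Q > 1$), but as a proof of the lemma as stated your sketch is incomplete, so deferring to \cite[Theorem 2.4]{T94}, as the authors do, is the right call.
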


The following Gaussian estimate is standard:
\begin{equation}\label{stdGaussian}
\frac{1}{2\sqrt{2\pi}x} e^{-x^2/2} \le \Phi(x) \le 
\frac{1}{\sqrt{2\pi}} e^{-x^2/2} \quad \text{for all } x \ge 1.
\end{equation}
Recall that $d(x, y) = \|v(x) - v(y) \|_{L^2}$ is the canonical metric of $v$.
The following theorem gives the exact local modulus of continuity
under the metrics $d$ and $\Delta$, respectively.
Note that the strong LND property (Assumption \ref{a2}) is not required 
for this result.

\begin{theorem}\label{Thm:LIL}
Under Assumptions \ref{a1} and \ref{a3}, for any fixed $x_0 \in T$, we have
\begin{equation}\label{Eq:LIL1}
\lim_{r \to 0+} \sup_{x \in T:\, 0 < d(x, x_0) \le r} \frac{|v(x) - v(x_0)|}{d(x, x_0) 
\sqrt{\log\log(d(x, x_0)^{-1})}} = \sqrt{2}
\quad \text{a.s.}
\end{equation}
and
\begin{equation}\label{Eq:LIL2}
\lim_{r \to 0+} \sup_{x \in T:\, 0 < \Delta(x, x_0) \le r} \frac{|v(x) - v(x_0)|}
{\Delta(x, x_0) \sqrt{\log\log(\Delta(x, x_0)^{-1})}} = \kappa 
\quad \text{a.s.}
\end{equation}
for some positive finite constant $\kappa$ satisfying
\begin{equation*}
\sqrt{2}\, c_3 \le \kappa \le \sqrt{2}\, c_1,
\end{equation*}
where $c_1$ is the constant in \eqref{UB:d} and 
$c_3$ is the constant in Assumption \ref{a3}.
\end{theorem}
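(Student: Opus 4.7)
The zero--one law in Lemma \ref{0-1law} already reduces \eqref{Eq:LIL1} and \eqref{Eq:LIL2} to proving matching upper and lower bounds for the a.s.~limiting constants. My plan is to use Lemma \ref{T94} for the upper bound and the Chung-style harmonizable decomposition together with the second Borel--Cantelli lemma for the lower bound. The comparability $c_3 \Delta \le d \le c_1 \Delta$ at small scales, coming from Assumption \ref{a3} and Lemma \ref{Lem:DMX17}, will be used throughout to transfer covering and variance estimates between the two metrics.

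For the upper bound in \eqref{Eq:LIL1}, I would partition a small $d$-neighbourhood of $x_0$ into shells $A_n = \{x \in T : a^{-n-1} < d(x, x_0) \le a^{-n}\}$ for a fixed $a > 1$. On $A_n$ the process $v(\cdot) - v(x_0)$ has $\sup\|v - v(x_0)\|_{L^2} \le a^{-n}$, and converting a $\Delta$-covering through $d \le c_1 \Delta$ yields $N(A_n, d, \eps) \le C(a^{-n}/\eps)^Q$. Applying Lemma \ref{T94} with $u_n = \lambda a^{-n}\sqrt{2 \log n}$ and $p = Q$ then gives
\[
\P\bigg\{\sup_{x \in A_n} |v(x) - v(x_0)| \ge u_n\bigg\} \le C(\log n)^{Q/2}\, n^{-\lambda^2},
\]
which is summable for any $\lambda > 1$. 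The first Borel--Cantelli lemma, together with $d(x, x_0) > a^{-n-1}$ and $\log\log d(x, x_0)^{-1} = \log n + O(1)$ on $A_n$, yields $\limsup \le a\lambda \sqrt 2$ a.s., and letting $a, \lambda \downarrow 1$ gives the upper bound $\sqrt 2$. The same argument applied to $\Delta$-shells, with $\sigma \le c_1 a^{-n}$ from Lemma \ref{Lem:DMX17}, gives the upper bound $\sqrt 2\, c_1$ for $\kappa$ in \eqref{Eq:LIL2}.

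For the lower bound I would reuse the independent decomposition $v = v_n + \widetilde v_n$ from the proof of Theorem \ref{Thm:ChungLIL}, with $v_n = v([b_n, b_{n+1}), \cdot)$ and $b_n = \exp(n^{1+\delta})$ for a small $\delta > 0$. Along a coordinate direction $e_1$, I would pick $x_n = x_0 + h_n e_1 \in T$ with $h_n^{\alpha_1} = b_n^{-(1-\eta)}$ for a small $\eta > 0$, so that by \eqref{LB:d} the scale $r_n := d(x_n, x_0)$ is comparable to $\Delta(x_n, x_0) = b_n^{-(1-\eta)}$. The two-term bound \eqref{Eq:a1-1} applied with $a = b_n$ and $b = b_{n+1}$ yields
\[
\big\|\widetilde v_n(x_n) - \widetilde v_n(x_0)\big\|_{L^2} \le c_0\big(b_n^{\gamma_1} h_n + b_{n+1}^{-1}\big) = o(r_n),
\]
so $\sigma_n := \|v_n(x_n) - v_n(x_0)\|_{L^2} = r_n(1 + o(1))$. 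The standard Gaussian lower tail gives a divergent sum $\sum_n \P\{|v_n(x_n) - v_n(x_0)| \ge (1 - \eps)\sigma_n \sqrt{2 \log n}\} = \infty$ for every $\eps \in (0, 1)$, and the independence of the $v_n$ together with the second Borel--Cantelli lemma produces infinitely many such $n$ almost surely. A routine Gaussian tail bound discards the negligible increment $\widetilde v_n(x_n) - \widetilde v_n(x_0)$, and using $\log\log r_n^{-1} = (1+\delta)\log n + O(1)$ yields, infinitely often a.s., the ratio at $x_n$ at least $(1-\eps)\sqrt{2/(1+\delta)}$ in \eqref{Eq:LIL1} and at least $c_3 (1-\eps)\sqrt{2/(1+\delta)}$ in \eqref{Eq:LIL2}. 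Letting $\delta, \eta, \eps \downarrow 0$ and invoking Lemma \ref{0-1law} completes the proof.

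The main technical obstacle is the joint calibration of $b_n$, $h_n$, and the scale $r_n$ in the lower bound: the first term in \eqref{Eq:a1-1} forces $b_n r_n \to 0$, while the second requires $b_{n+1} r_n \to \infty$, and simultaneously $\log\log r_n^{-1}$ must stay within a factor close to $1$ of $\log n$ so that the Gaussian threshold $\sqrt{2 \log n}$ survives into the exact constant after $\delta, \eta \downarrow 0$. On the upper-bound side, the sharpness of the constant $\sqrt 2$ comes from the fact that Lemma \ref{T94} produces only a polynomial-in-$\log n$ prefactor, which is absorbed once $\lambda^2 > 1$; this is precisely the improvement over Theorem 5.6 of \cite{MWX13} highlighted in the introduction.
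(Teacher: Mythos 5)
Your overall architecture (Talagrand's Lemma \ref{T94} plus the first Borel--Cantelli lemma for the upper bound; the decomposition $v = v_n + \widetilde v_n$ with $b_n = \exp(n^{1+\delta})$ plus the second Borel--Cantelli lemma for the lower bound) is the same as the paper's, and the upper-bound half is fine. But the lower bound has a genuine gap: the scale is mis-calibrated. You take $\Delta(x_n, x_0) = h_n^{\alpha_1} = b_n^{-(1-\eta)}$, so $r_n \asymp b_n^{-(1-\eta)}$ and $b_n r_n \asymp b_n^{\eta} \to \infty$, violating the very constraint $b_n r_n \to 0$ that you yourself list at the end. Quantitatively, the first term of \eqref{Eq:a1-1} is $c_0 b_n^{\gamma_1} h_n = c_0\, b_n^{\eta/\alpha_1 - 1}$, and $b_n^{\gamma_1} h_n / r_n = b_n^{\eta \gamma_1} \to \infty$ since $\gamma_1 = \alpha_1^{-1} - 1 > 0$; so the claimed bound $\|\widetilde v_n(x_n) - \widetilde v_n(x_0)\|_{L^2} = o(r_n)$ fails, and with it both the asymptotics $\sigma_n = r_n(1 + o(1))$ and the tail estimate needed to discard $\widetilde v_n$ (the analogue of \eqref{LIL:Eq2}). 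Heuristically, at a spatial scale $r_n \gg b_n^{-1}$ the low-frequency block $v([0, b_n), \cdot)$ carries a non-negligible share of the increment, so it cannot be treated as an error term.

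The gap is not repaired by flipping the sign of $\eta$: with $r_n = b_n^{-(1+\eta)} = \exp(-(1+\eta)n^{1+\delta})$ the other constraint fails, since $b_{n+1}^{-1} = \exp(-(n+1)^{1+\delta}) \gg r_n$ for large $n$ (because $(1+1/n)^{1+\delta} \to 1 < 1+\eta$). The admissible window $b_{n+1}^{-1} \ll r_n \ll b_n^{-1}$ is narrower than any fixed power of $b_n$, and one must take an intermediate, non-power scale such as the paper's $\rho_n = \exp(-(n^\delta + n^{1+\delta})) = b_n^{-1} e^{-n^\delta}$, for which $b_n \rho_n = e^{-n^\delta} \to 0$ and $b_{n+1}\rho_n \ge e^{\delta n^\delta} \to \infty$; with that choice your Borel--Cantelli computation goes through exactly as in the paper. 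Two smaller points: to compare $d(x_n, x_0)$ with $\Delta(x_n, x_0)$ you should invoke Assumption \ref{a3} (constant $c_3$) rather than \eqref{LB:d}, since Assumption \ref{a2} is not hypothesized in this theorem; and for \eqref{Eq:LIL2} the paper simply transfers \eqref{Eq:LIL1} through $c_3 \Delta \le d \le c_1 \Delta$ and the zero--one law, so rerunning the shell argument in the metric $\Delta$ is correct but unnecessary.
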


\begin{remark}\label{Rmk:LIL}
Meerschaert et al.~\cite{MWX13} have considered Gaussian random fields 
that have stationary increments and satisfy $d(x, y) \asymp \Delta(x, y)$, 
but only proved that
the limit in \eqref{Eq:LIL1} is equal to some finite constant $\kappa_1 \ge \sqrt{2}$.
Our theorem does not require stationarity of increments and we obtain 
the exact constant $\kappa_1 = \sqrt{2}$.
Meerschaert et al.~\cite{MWX13} also proved another form of LIL:
\[ \limsup_{|\eps| \to 0+} \sup_{s:\, |s_j| \le |\eps_j|} \frac{|v(x_0 + s) - v(x_0)|}
{d(s, 0)\sqrt{\log\log(1+\prod_{j=1}^k|s_j|^{-\alpha_j})}} = \kappa_2 \quad \text{a.s.} 
\]
\end{remark}

\begin{proof}[Proof of Theorem \ref{Thm:LIL}]
Fix $x_0 \in T$. For $r > 0$, define 
\[ L(r) := \sup_{x \in T:\, 0 < d(x, x_0) \le r} \frac{|v(x) - v(x_0)|}
{d(x, x_0) \sqrt{\log\log(d(x, x_0)^{-1})}}. \]
By Lemma \ref{0-1law}, $\lim_{r \to 0+} L(r) = \kappa$ a.s.~
for some constant $0 \le \kappa \le \infty$.
To prove \eqref{Eq:LIL1}, we claim that
\begin{equation}\label{LIL:UB}
\limsup_{r \to 0+} L(r) \le \sqrt{2} \quad \text{a.s.}
\end{equation}
and 
\begin{equation}\label{LIL:LB}
\limsup_{r \to 0+} L(r) \ge \sqrt{2} \quad \text{a.s.}
\end{equation}

We first prove the upper bound \eqref{LIL:UB}.
Let $a > 1$ and $\zeta > 0$ be constants. 
For each $n \ge 1$, let 
\[ r_n = a^{-n} \quad \text{and} \quad u_n = (1+\zeta) r_n \sqrt{2 \log\log(1/r_n)}.\]
Consider the event
\[ A_n = \bigg\{ \sup_{x \in T: d(x, x_0) \le r_n} |v(x) - v(x_0)| > u_n \bigg\}. \]
We are going to use Lemma \ref{T94} to derive an upper bound for $\P(A_n)$.
Fix a large $n$.
Consider $S := \{ x \in T : d(x, x_0) \le r_n \}$ and $X(x) := v(x) - v(x_0)$ for $x \in S$.
Then,
\[\sigma^2 := \sup_{x \in S} \|X(x)\|^2_{L^2} = r_n^2\]
and by Lemma \ref{Lem:DMX17}, for all $x, y \in S$,
\[ d_X(x, y) \le c_1 \sum_{j=1}^k |x_j - y_j|^{\alpha_j}. \]
Then $N(S, d_X, \eps) \le C_0(r_n/\eps)^Q$ for $0 < \eps < \sigma$,
where $C_0$ is a constant independent of $\eps$ or $n$,
and can be chosen such that $M := C_0^{1/Q} r_n > \sigma$.
For $n$ large enough, $u_n > r_n(1+\sqrt{Q})$.
Take $\eps_0 = \sigma$ and $p = Q$. 
Then by Lemma \ref{T94}, we have
\begin{align*}
\P(A_n) &\le 2 \bigg( \frac{KC_0^{1/Q}r_n u_n}{\sqrt{Q}\, r_n^2}\bigg)^Q \Phi(u_n/r_n).
\end{align*}
Using the estimate \eqref{stdGaussian},
we get that
\begin{align*}
\P(A_n) & \le C {(\log n)}^{Q/2} \, n^{-(1+\zeta)^2}.
\end{align*}
Hence, $\sum_{n=1}^\infty \P(A_n) < \infty$.
By the Borel--Cantelli lemma,
\begin{align*}
\limsup_{n \to \infty} \sup_{x \in T:\, 0 < d(x, x_0) \le r_n}
\frac{|v(x) - v(x_0)|}{r_n \sqrt{\log\log(1/r_n)}}
\le (1+\zeta)\sqrt{2} \quad \text{a.s.}
\end{align*}
and thus
\[ \limsup_{n \to \infty} \sup_{x \in T:\, r_{n+1}\le d(x, x_0) \le r_n}
\frac{|v(x) - v(x_0)|}{r_{n+1} \sqrt{\log\log(1/r_n)}} \le a (1+\zeta) \sqrt{2} \quad \text{a.s.} 
\]
This implies that
\[ \limsup_{r \to 0+} L(r) \le a (1+\zeta) \sqrt{2} \quad \text{a.s.} \]
Letting $a \downarrow 1$ and $\zeta \downarrow 0$ along rational sequences, 
we get \eqref{LIL:UB}.

We turn to the proof of the lower bound \eqref{LIL:LB}.
Fix $0 < \eps < 1$.
Let $0 < \delta < 1$ be a small fixed number (depending on $\eps$) to be determined.
Write $x_0 = (x_{0, 1}, \dots, x_{0, k})$.
For each $n \ge 1$, let 
\[x_n = \big(x_{0,1} + \rho_n^{\alpha_1^{-1}}, \dots, x_{0,k} + \rho_n^{\alpha_k^{-1}}\big), \]
where $\rho_n = \exp(-(n^\delta + n^{1+\delta}))$.
With Assumption \ref{a1}, we can write $v(x) = v_n(x) + \widetilde v_n(x)$, where
\[ v_n(x) = v([b_n, b_{n+1}), x), \quad
\widetilde{v}_n(x) = v(\R_+ \setminus [b_n, b_{n+1}), x),\] 
and $b_n = \exp(n^{1+\delta})$.
We aim to prove that
\begin{equation}\label{LIL:Eq1}
\limsup_{n \to \infty} \frac{|v_n(x_n) - v_n(x_0)|}
{d(x_n, x_0)\sqrt{\log\log(d(x_n, x_0)^{-1})}} \ge (1-\eps) \sqrt{2} 
\quad \text{a.s.}
\end{equation}
and
\begin{equation}\label{LIL:Eq2}
\limsup_{n \to \infty} \frac{|\widetilde{v}_n(x_n) - \widetilde{v}_n(x_0)|}
{d(x_n, x_0)\sqrt{\log\log(d(x_n, x_0)^{-1})}} \le \eps \quad \text{a.s.}
\end{equation}

To prove \eqref{LIL:Eq1}, we consider for each $n \ge 1$ the event
\[ B_n = \left\{ |v_n(x_n) - v_n(x_0)| \ge 
(1- \eps) d(x_n, x_0) \sqrt{2\log\log(d(x_n, x_0)^{-1})} \right\}. \]
Similarly to \eqref{Chung:Eq1}--\eqref{Chung:Eq3} in the proof of 
Theorem \ref{Thm:ChungLIL}, we can deduce from Assumption \ref{a1} that,
provided $\delta \le \min\{ \alpha_1^{-1}-1, \dots, \alpha_k^{-1}-1 \}$,
\begin{equation}\label{tildev}
\begin{split}
\| \widetilde{v}_n(x_n) - \widetilde{v}_n(x_0) \|_{L^2}
&\le c_0 \Big( \sum_{j=1}^k b_n^{\gamma_j} |x_{n,j} - x_{0,j}| + b_{n+1}^{-1} \Big)\\
&\le K_1 \rho_n \exp(-\delta n^\delta).
\end{split}
\end{equation}
Note that $\Delta(x_n, x_0) = k\rho_n$.
By Assumption \ref{a3} and Lemma \ref{Lem:DMX17},
\begin{equation}\label{d-Delta}
c_3 \Delta(x, x_0) \le d(x, x_0) \le c_1 \Delta(x, x_0)
\end{equation}
for all $x$ in a neighbourhood of $x_0$.
Then, for $n$ large,
\begin{equation}\label{d-rho}
c_3 k \rho_n \le d(x_n, x_0) \le c_1 k \rho_n.
\end{equation}
Therefore, by the triangle inequality, \eqref{tildev} and \eqref{d-rho}, we have
\begin{equation}\label{L2-vn}
\begin{split}
\|v_n(x_n) - v_n(x_0)\|_{L^2} 
& \ge  \|v(x_n) - v(x_0)\|_{L^2} - \| \widetilde{v}_n(x_n) - \widetilde{v}_n(x_0) \|_{L^2}\\
& \ge (1 - K_2 \exp(-\delta n^\delta))d(x_n, x_0).
\end{split}
\end{equation}
Now, \eqref{L2-vn} and \eqref{d-rho} imply that for $n$ large,
\begin{align*}
B_n \supset \left\{ |v_n(x_n) - v_n(x_0)| \ge (1-\eps/2)
\|v_n(x_n) - v_n(x_0)\|_{L^2} \sqrt{2\log\log (C/\rho_n)} \right\},
\end{align*}
where $C$ is a suitable constant.
Then, by the standard Gaussian estimate \eqref{stdGaussian}, 
we get that, for $n$ large,
\begin{align*}
\P(B_n) & \ge K (\log n)^{-1/2} n^{-(1-\eps/2)^2(1+\delta)}.
\end{align*}
By choosing $\delta$ small enough such that $(1-\eps/2)^2(1+\delta)\le 1$,
we have $\sum_{n=1}^\infty \P(B_n) = \infty$.
Hence, by the independence among $v_1, v_2, \dots$ and 
the second Borel--Cantelli lemma, we get \eqref{LIL:Eq1}.

For \eqref{LIL:Eq2}, we use \eqref{tildev} and \eqref{d-rho} 
above to get that
\begin{align*}
& \P\left\{ |\widetilde{v}_n(x_n) - \widetilde{v}_n(x_0)| \ge 
\eps d(x_n, x_0) \sqrt{\log\log(d(x_n, x_0)^{-1})} \right\}\\
& \le \P\left\{ |\widetilde{v}_n(x_n) - \widetilde{v}_n(x_0)| \ge 
K \eps \|\widetilde{v}_n(x_n) - \widetilde{v}_n(x_0)\|_{L^2} \exp(\delta n^\delta)
\sqrt{\log\log(C/\rho_n)} \right\}.
\end{align*}
This probability, by standard Gaussian estimate, is bounded above by
\[ C' \exp\left(-\frac{1}{2}K^2 \eps^2 \exp(2\delta n^\delta) \log\log(C/\rho_n)\right) 
\le C' n^{-2} \]
for $n$ large.
Thus, the Borel--Cantelli lemma implies \eqref{LIL:Eq2}.
Since $v(x) = v_n(x) + \widetilde{v}_n(x)$, combining
\eqref{LIL:Eq1} and \eqref{LIL:Eq2} yields
\[ \limsup_{n \to \infty} \frac{|v(x_n) - v(x_0)|}
{d(x_n, x_0) \sqrt{\log\log(d(x_n, x_0)^{-1})}} \ge (1-\eps)\sqrt{2} - \eps \quad \text{a.s.} 
\]
Since $d(x_n, x_0) \to 0$, this implies 
$\limsup_{r\to0+}L(r) \ge (1-\eps)\sqrt{2} - \eps$ a.s.
Letting $\eps \downarrow 0$ along a rational sequence, we get \eqref{LIL:LB}. 
This completes the proof of \eqref{Eq:LIL1}.
Finally, \eqref{Eq:LIL2} is a direct consequence of 
Lemma \ref{0-1law}, \eqref{Eq:LIL1} and \eqref{d-Delta}.
\end{proof}

\section{The exact uniform modulus of continuity}

The following theorem establishes the exact uniform modulus of continuity for $v$.

\begin{theorem}\label{Thm:MC}
Under Assumptions \ref{a1}, \ref{a2}, and \ref{a3}, we have
\begin{equation}\label{MC-Delta}
\lim_{r \to 0+} \sup_{x, y \in T:\, 0 < \Delta(x, y) \le r}
\frac{|v(x) - v(y)|}{\Delta(x, y) \sqrt{\log(\Delta(x, y)^{-1})}} = \kappa \quad \text{a.s.}
\end{equation}
and
\begin{equation}\label{MC-d}
\lim_{r \to 0+} \sup_{x, y \in T:\, 0 < d(x, y) \le r}
\frac{|v(x) - v(y)|}{d(x, y) \sqrt{\log(d(x, y)^{-1})}} = \kappa' \quad \text{a.s.}
\end{equation}
for some positive finite constants $\kappa$ and $\kappa'$ satisfying
\begin{equation}\label{MC:const}
\sqrt{2 Q c_2} \le \kappa \le \sqrt{2Q}\, c_1 \quad \text{and} \quad
 \sqrt{2 Q c_2} \,c_1^{-1} \le \kappa' \le \sqrt{2Q},
\end{equation}
where $Q = \sum_{j=1}^k \alpha_j^{-1}$, $c_1$ is the constant in \eqref{UB:d} and 
$c_2$ is the constant in Assumption \ref{a2}.
\end{theorem}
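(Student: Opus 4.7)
The plan is to combine the zero--one law in Lemma \ref{0-1law}(iii) (and its analogue for the canonical metric $d$ mentioned in the last sentence of the lemma) with matching upper and lower bounds on the limiting constants. The $\Delta$-version \eqref{MC-Delta} and the $d$-version \eqref{MC-d} will be linked through the local comparison $c_3 \Delta(x,y) \le d(x,y) \le c_1 \Delta(x,y)$, which follows from Lemma \ref{Lem:DMX17} together with Assumption \ref{a3}.

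For the upper bound $\kappa \le \sqrt{2Q}\,c_1$, I would proceed by an anisotropic discretization argument. Fix $\eps>0$ and set $r_n = 2^{-n}$. Introduce an anisotropic lattice $G_n \subset T$ with spacing $\eta\, r_n^{1/\alpha_j}$ in the $j$-th direction for a small parameter $\eta>0$, so $|G_n| \le C\eta^{-Q} r_n^{-Q}$ and adjacent lattice points are at $\Delta$-distance $\asymp \eta r_n$. A union bound over the $O(\eta^{-2Q} r_n^{-Q})$ pairs $(x,y) \in G_n \times G_n$ with $\Delta(x,y) \le 2 r_n$, combined with the $L^2$-upper bound $\|v(x)-v(y)\|_{L^2} \le c_1 \Delta(x,y)$ of Lemma \ref{Lem:DMX17} and the standard Gaussian tail at level $(1+\eps)\sqrt{2Q\log(1/r_n)}\cdot c_1 \Delta(x,y)$, yields a probability summable in $n$, so Borel--Cantelli controls the supremum of the ratio over grid pairs. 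The residual oscillation over anisotropic cells of $\Delta$-diameter $\eta r_n$ is controlled by Lemma \ref{Lem:LT} together with the entropy estimate $N(\cdot,d_v,\eps) \lesssim (r_n/\eps)^Q$ from Lemma \ref{Lem:DMX17}, and contributes at most a quantity of order $\eta\, r_n\sqrt{\log(1/(\eta r_n))}$, negligible compared with the main term once $\eta$ is small. Letting $\eta, \eps \downarrow 0$ gives $\kappa \le \sqrt{2Q}\,c_1$. The bound $\kappa' \le \sqrt{2Q}$ follows by the identical scheme carried out directly in the canonical metric $d$, where no $c_1$ appears because $\|v(x)-v(y)\|_{L^2} = d(x,y)$.

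For the lower bound $\kappa \ge \sqrt{2Qc_2}$, I would use the strong LND property from Assumption \ref{a2} via a sequential-conditioning argument that refines the one in \cite{MWX13}. For each $n$, choose a family of $N_n \asymp r_n^{-Q}$ pairs $(x_i, y_i) \in T$ with $\Delta(x_i, y_i) = r_n$ and all $2 N_n$ points pairwise $\Delta$-separated by at least $r_n$ (and arranged away from the origin). Enumerating the pairs and conditioning sequentially on $v(y_1), v(x_1), v(y_2), v(x_2),\ldots$, Assumption \ref{a2} gives $\mathrm{Var}(v(x_j)\mid \text{past}) \ge c_2 r_n^2$. Since the conditional distribution is Gaussian, a mean-independent tail bound shows that, regardless of the observed value of $v(y_j)$,
\begin{equation*}
\P\bigl(|v(x_j) - v(y_j)| \ge u_n \,\big|\, \text{past}\bigr) \ge \P\bigl(|Z| \ge u_n / (r_n \sqrt{c_2})\bigr),
\end{equation*}
with $Z$ standard Gaussian. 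Taking $u_n = (1-\eps)\sqrt{2Qc_2}\, r_n\sqrt{\log(1/r_n)}$ and using \eqref{stdGaussian}, this conditional lower bound is $\gtrsim r_n^{(1-\eps)^2 Q}/\sqrt{\log(1/r_n)}$. Iterating the conditional estimate yields
\begin{equation*}
\P\Bigl(\max_{1\le i\le N_n} |v(x_i) - v(y_i)| < u_n\Bigr) \le \bigl(1 - c\, r_n^{(1-\eps)^2 Q}/\sqrt{\log(1/r_n)}\bigr)^{N_n},
\end{equation*}
which decays super-polynomially (hence summably) in $n$ since $N_n \asymp r_n^{-Q}$ and $(1-\eps)^2<1$. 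Borel--Cantelli then gives $\kappa \ge (1-\eps)\sqrt{2Qc_2}$, and sending $\eps \downarrow 0$ finishes the bound. The lower bound $\kappa' \ge \sqrt{2Qc_2}\,c_1^{-1}$ is immediate from $\Delta(x,y) \ge d(x,y)/c_1$ near the diagonal.

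The main obstacle is the sequential-conditioning step in the lower bound: the lattice $\{(x_i, y_i)\}$ must be arranged so that, at every step $j$, the minimum $\Delta$-distance from $x_j$ to every previously conditioned point --- including the origin $x^0=0$ that Assumption \ref{a2} carries as an extra conditioning atom --- stays $\ge c\, r_n$. This is exactly where moving from the sectorial LND of \cite{MWX13} to the strong LND of Assumption \ref{a2} is essential: it preserves the uniform lower bound $c_2 r_n^2$ on conditional variances throughout the induction, and thus extracts the sharp explicit constant $\sqrt{2Qc_2}$ rather than a merely positive one.
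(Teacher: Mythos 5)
Your proposal is correct and follows essentially the same route as the paper: the zero--one law of Lemma \ref{0-1law}, an anisotropic-lattice union-bound plus Borel--Cantelli argument for the upper bounds, a strong-LND conditioning argument with Anderson's inequality (keeping the origin and the $\Delta$-separation of order $r_n$ in view) for the lower bound $\sqrt{2Qc_2}$, and the comparison $c_3\Delta \le d \le c_1\Delta$ to pass between the two metrics. The only deviations are implementation details --- you work at a single scale $2^{-n}$ with a small lattice parameter $\eta$ and control the cell oscillations by Lemma \ref{Lem:LT} with the entropy bound, where the paper uses two geometric scales $a^{-n}$, $b^{-n}$ ($a<b$) and a crude a.s.\ entropy modulus, and you take disjoint pairs with step-by-step conditioning where the paper conditions each lattice increment on all remaining lattice values via a suitable ordering --- and these do not change the substance of the proof.
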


\begin{remark}\label{Rmk:MC}
Our Assumption \ref{a2} of strong LND is stronger than condition \emph{(A2)} in 
Theorem 4.1 of Meerschaert et al.~\cite{MWX13}, which is known as the sectorial 
LND property. But our estimates \eqref{MC:const} for the constants are sharper 
than the estimates \emph{(4.3)} in \cite{MWX13}.
\end{remark}

\begin{proof}[Proof of Theorem \ref{Thm:MC}]
For any $r > 0$, let
\[ L(r) := \sup_{x, y \in T:\, 0 < \Delta(x, y) \le r} 
\frac{|v(x) - v(y)|}{\Delta(x, y) \sqrt{\log(\Delta(x, y)^{-1})}}. \]
By Lemma \ref{0-1law}, $\lim_{r \to 0+} L(r) = \kappa$ a.s.~
for some constant $0 \le \kappa \le \infty$.
To prove \eqref{MC-Delta}, we aim to show that
\begin{equation}\label{MC:UB}
\lim_{r \to 0+} L(r) \le \sqrt{2Q} \, c_1 \quad \text{a.s.}
\end{equation}
and
\begin{equation}\label{MC:LB}
\lim_{r \to 0+} L(r) \ge \sqrt{2 Q c_2} \quad \text{a.s.}
\end{equation}

For the upper bound \eqref{MC:UB}, we first prove that 
there is a finite constant $C_0$ such that for any fixed $b > 1$,
\begin{equation}\label{MC:UB1}
\limsup_{n \to \infty} \sup_{x, y \in T:\, \Delta(x, y) \le 2k b^{-n}}
\frac{|v(x) - v(y)|}{b^{-n} \sqrt{\log b^n}} \le C_0 \quad \text{a.s.}
\end{equation}
Indeed, by Theorem 1.3.5 in \cite{AT}, there exists a universal constant $K_0$ such that
for a.e.~$\omega$, there exists $r_0 = r_0(\omega)$ such that for all $0 < r < r_0$,
\[ \sup_{x, y \in T:\, d(x, y) \le r} |v(x) - v(y)| 
\le K_0 \int_0^r \sqrt{\log N(T, d, \eps)} \, d\eps, \]
where $d$ is the canonical metric of $v$.
By Lemma \ref{Lem:DMX17}, $N(T, d, \eps) \le C \eps^{-Q}$ 
for all $\eps > 0$ small, thus for $r > 0$ small,
\[ \int_0^r \sqrt{\log N(T, d, \eps)} \,d\eps \le C r \sqrt{\log(1/r)}. \]
Also, by Lemma \ref{Lem:DMX17}, 
if $\Delta(x, y) \le 2kb^{-n}$ and if $n \ge n_0(\omega)$ is large enough, 
then $d(x, y)$ would be less than $r_0(\omega)$.
Hence, \eqref{MC:UB1} follows immediately.

Of course, \eqref{MC:UB1} implies $\limsup_{r\to0+} L(r) \le \kappa$ for some
finite constant $\kappa$.
In order to improve this and get the sharper bound \eqref{MC:UB},
we use an approximation argument based on anisotropic lattice points.
Let $\eps > 0$ and $1 < a < 2$.
Choose $b$ such that $a < b < a^{1+\eps/(2Q)}$.
Let $n \ge 1$ be an integer. 
For each $i = (i_1, \dots, i_k) \in \Z^k$ and $m = (m_1, \dots, m_k) \in \Z^k$,
define the anisotropic lattice points $z_{n,i}$ and $h_{n,m}$ 
in $\prod_{j=1}^k b^{-n/\alpha_j} \Z$ by
\begin{align*}
z_{n,i} = (i_1 b^{-n/\alpha_1}, \dots, i_k b^{-n/\alpha_k}) \quad
\text{and} \quad  h_{n,m} = (m_1 b^{-n/\alpha_1}, \dots, m_k b^{-n/\alpha_k}).
\end{align*}
Let
\begin{align*}
I_n = \{ i \in \Z^k : z_{n,i} \in T \} \quad \text{and} \quad
M_n = \bigg\{ m \in \Z^k : 
 \sum_{j=1}^k |m_j|^{\alpha_j} b^{-n} \le a^{-n}\bigg\}.
 \end{align*}
Consider the event
\[ A_n = \bigg\{ \max_{i \in I_n} \max_{m \in M_n}
|v(z_{n,i} + h_{n,m}) - v(z_{n,i})| > c_1 a^{-n} \sqrt{2 (Q + \eps) \log a^n}
\bigg\}. \]
By Lemma \ref{Lem:DMX17}, for $i \in I_n$, $m \in M_n$ and $n$ large,
\begin{equation*}
\| v(z_{n,i} + h_{n,m}) - v(z_{n,i}) \|_{L^2} \le c_1 \sum_{j=1}^k |m_j|^{\alpha_j} b^{-n} 
\le c_1 a^{-n}.
\end{equation*}
Also, the cardinality of $I_n$ is $\le C b^{Qn}$ and 
that of $M_n$ is $\le C (b/a)^{Qn}$.
It follows that
\begin{align*}
\P(A_n) 
& \le C b^{Qn} \left( \frac{b}{a} \right)^{Qn} \max_{i \in I_n} \max_{m \in M_n} 
\P\bigg\{ \frac{|v(z_{n,i} + h_{n,m}) - v(z_{n,i})|}{\|v(z_{n,i} + h_{n,m}) - v(z_{n,i})\|_{L^2}} 
> \sqrt{2 (Q + \eps) \log a^n} \bigg\}.
\end{align*}
Then by the standard Gaussian estimate \eqref{stdGaussian}, for $n$ large,
\begin{align*}
\P(A_n) 
& \le C \left( \frac{b^2}{a} \right)^{Qn} \exp(- (Q+\eps)\log a^n) 
= C\left( \frac{b^2}{a^{2+\eps/Q}}\right)^{Qn}.
\end{align*}
The choice of $b$ implies that $\sum_{n=1}^\infty \P(A_n) < \infty$.
By the Borel--Cantelli lemma,
\begin{equation}\label{MC:UB2}
\limsup_{n \to \infty} \max_{i \in I_n} \max_{m \in M_n} 
\frac{|v(z_{n,i} + h_{n,m}) - v(z_{n,i})|}{a^{-n} \sqrt{\log a^n}} 
\le  c_1\sqrt{2(Q+\eps)} \quad \text{a.s.}
\end{equation}

To prove \eqref{MC:UB},
we consider $x, y \in T$ such that $a^{-n-1} \le \Delta(x, y) \le a^{-n}$,
and approximate them by lattice points.
Write $x = (x_1, \dots, x_k)$ and $y = (y_1, \dots, y_k)$.
Choose $i \in I_n$ such that $z_{n,i} = z_{n, i}(x) = (i_1 b^{-n/\alpha_1}, 
\dots, i_k b^{-n/\alpha_k})$ is the lattice point that is closest to $x$.
In particular, for all $j \in\{ 1, \dots, k\}$, we have 
$|x_j - i_j b^{-n/\alpha_j}| \le b^{-n/\alpha_j}$.
Since $\Delta(x, y) \le a^{-n}$, we can also find $m \in M_n$ such that
for all $j \in \{1, \dots, k\}$,
\begin{align*}
m_j b^{-n/\alpha_j} \le y_j - x_j \le (m_j +1) b^{-n/\alpha_j} 
\quad \text{if } y_j - x_j \ge 0,\\
(m_j-1) b^{-n/\alpha_j} \le y_j - x_j \le m_j b^{-n/\alpha_j} 
\quad \text{if } y_j - x_j < 0.
\end{align*}
Let $h_{n,m} = h_{n,m}(x, y) = (m_1 b^{-n/\alpha_1}, \dots, m_k b^{-n/\alpha_k})$ 
and write
\begin{equation*}
v(y) - v(x) = [v(y) - v(z_{n,i} + h_{n,m})] + [v(z_{n,i} + h_{n,m}) - v(z_{n,i})] + 
[v(z_{n,i}) - v(x)].
\end{equation*}
Note that 
\[\Delta(z_{n,i}, x) \le \sum_{j=1}^k |x_j-i_j b^{-n/\alpha_j}|^{\alpha_j} \le k b^{-n}\]
and
\begin{align*}
\Delta(y, z_{n,i}+h_{n,m}) &\le \Delta(y, x+h_{n,m}) + \Delta(x, z_{n,i})\\
& \le \sum_{j=1}^k |y_j - x_j -m_j b^{-n/\alpha_j}|^{\alpha_j} 
+ \sum_{j=1}^k |x_j-i_j b^{-n/\alpha_j}|^{\alpha_j}\\
& \le 2k b^{-n}.
\end{align*}
Then, since $b > a$, \eqref{MC:UB1} implies that
\[ \limsup_{n \to \infty} \sup_{x, y \in T:\, a^{-n-1} \le \Delta(x, y) \le a^{-n}}
\frac{|v(y) - v(z_{n,i} + h_{n,m})|}{a^{-n} \sqrt{\log a^n}} 
= 0 \quad \text{a.s.} \]
and
\[ \limsup_{n \to \infty} \sup_{x, y \in T:\, a^{-n-1} \le \Delta(x, y) \le a^{-n}}
\frac{|v(z_{n,i}) - v(x)|}{a^{-n} \sqrt{\log a^n}} 
= 0 \quad \text{a.s.} \]
Therefore, together with \eqref{MC:UB2}, we have
\[ \limsup_{n \to \infty} \sup_{x, y \in T:\, a^{-n-1} \le \Delta(x, y) \le a^{-n}}
\frac{|v(y) - v(x)|}{a^{-n}\sqrt{\log a^n}} \le c_1\sqrt{2(Q+\eps)} \quad \text{a.s.}
\]
This implies that
\[ \limsup_{r \to 0+} \sup_{x, y \in T:\, 0 < \Delta(x, y) \le r}
\frac{|v(y) - v(x)|}{\Delta(x, y)\sqrt{\log (\Delta(x, y)^{-1})}} 
\le a c_1 \sqrt{2(Q+\eps)} \quad \text{a.s.}
\]
Letting $\varepsilon \downarrow 0$ and $a \downarrow 1$ along rational sequences,
we get the upper bound \eqref{MC:UB}.

Next, we prove the lower bound \eqref{MC:LB} using
the strong LND property from Assumption \ref{a2}. 
Let $T = \prod_{j=1}^k [t_j - s_j, t_j+ s_j]$.
For simplicity, we consider the case where $(t_1, \dots, t_k) \in [0, \infty)^k$
(the proof is similar for other cases).
In this case, it is enough to prove \eqref{MC:LB} with $T$ being replaced by 
$\tilde T = \prod_{j=1}^k [t_j, t_j + s_j]$, whose interior does not contain the origin.
For each $n \ge 1$, let $r_n = 2^{-n}$.
For each $i = (i_1, \dots, i_k) \in \Z^k$, denote 
\[ i-1^* = (i_1 - 1, i_2 \dots, i_k).\]
Define the lattice points $x_{n, i}$ by
\begin{align*}
x_{n, i} = (i_1 2^{-n/\alpha_1}, \dots, i_k 2^{-n/\alpha_k}).
\end{align*}
Let
\begin{align*}
I_n = \{ i \in \Z^k : x_i \in \tilde T \text{ and } x_{i-1^*} \in \tilde T \}
\quad \text{and} \quad
I'_n = \{ i \in \Z^k : x_i \in \tilde T \}.
\end{align*}
Note that $\Delta(x_{n, i}, x_{n, i-1^*}) = r_n$ and the function 
$r \mapsto r\sqrt{\log(1/r)}$ is increasing for $r > 0$ small.
Then
\begin{align*}
\lim_{r \to 0+} L(r) 
& \ge \lim_{n \to \infty} \sup_{x, y \in \tilde T:\, 0 < \Delta(x, y) \le r_n}
\frac{|v(x) - v(y)|}{\Delta(x, y) \sqrt{\log(\Delta(x, y)^{-1})}}\\
&\ge \liminf_{n \to \infty} L_n,
\end{align*}
where
\[ L_n := \max_{i \in I_n} \frac{|v(x_{n,i}) - v(x_{n,i-1^*})|}{r_n \sqrt{\log(1/r_n)}}.
\]
To prove \eqref{MC:LB}, it suffices to prove that 
\begin{equation}\label{MC:LB2}
\liminf_{n \to \infty} L_n \ge \sqrt{2Q c_2} \quad \text{a.s.}
\end{equation}
To this end, let $0 < K < \sqrt{2Q c_2}$.
Fix a large integer $n$ and write $x_i = x_{n,i}$ for simplicity.
We claim that there is a constant $C$ independent of $n$ or $i$ such that 
for all $i \in I_n$, 
\begin{equation}\label{CondP}
\P\bigg\{ \frac{|v(x_{i}) - v(x_{i-1^*})|}{r_n\sqrt{\log(1/r_n)}} \le K\, \bigg|\,
v(x_j) : j \in I'_n \setminus \{ i \}  \bigg\} \le 
\exp\left(-\frac{C2^{-nK^2/(2c_2)}}{\sqrt{n}}\right).
\end{equation}
Indeed, for all $i \in I_n$, by Assumption \ref{a2} and the property that 
the interior of $\tilde T$ does not contain the origin, 
\begin{equation}\label{Eq:cond_var}
\mathrm{Var}\big(v(x_i) \big| v(x_j) : j \in I'_n \setminus \{i\} \big) 
\ge c_2 \min_{j \in I'_n \cup\{0\} \setminus \{i\}} \Delta^2(x_i, x_j)
= c_2\, r_n^2.
\end{equation}
The conditional distribution of $v(x_i)$ given $\{v(x_j) : j \in I'_n \setminus\{i\}\}$ 
is Gaussian with conditional variance 
$\mathrm{Var}(v(x_i) | v(x_j) : j \in I'_n \setminus\{i\})$,
and $v(x_{i-1^*})$ is constant given $\{v(x_j) : j \in I'_n \setminus\{i\}\}$.
Then, by Anderson's inequality \cite{A55} and \eqref{Eq:cond_var}, we have
\begin{align*}
\P\bigg\{\frac{|v(x_i) - v(x_{i-1^*})|}{r_n \sqrt{\log(1/r_n)}} \le K\, \bigg| \,
v(x_j) : j\in I'_n \setminus \{i\} \bigg\}
\le \P\Big\{|Z| \le K \sqrt{c_2^{-1}\log(1/r_n)}\,\Big\},
\end{align*}
where $Z$ is a standard Gaussian random variable. 
Hence, we can derive \eqref{CondP}
using the Gaussian estimate \eqref{stdGaussian} and 
the elementary inequality $1 - x \le \exp(-x)$.

Let $N = N(n)$ be the cardinality of $I_n$.
Order the members of $I_n$ by $i(1), \dots, i(N)$ in a way such that
the value of the first coordinate of $i$ is nondecreasing.
For each $m \in \{1, \dots, N\}$, let $I_n(m) = \{ i(1), \dots, i(m) \}$ and 
consider the event
\[ B_m = \bigg\{ \max_{i \in I_n(m)} \frac{|v(x_i) - v(x_{i-1^*})|}{r_n \sqrt{\log(1/r_n)}} 
\le K \bigg\}. \]
Notice that, for each $2 \le m \le N$, 
the event $B_{m-1}$ depends on the value of process $v$ 
at points among $\{x_{i(1)-1^*}, x_{i(1)}, \dots, x_{i(m-1)-1^*}, x_{i(m-1)}\}$, 
none of which coincides with $x_{i(m)}$
because of the way we order the members of $I_n$.
Therefore, $B_{m-1} \in \sigma\{ v(x_j) : j \in I'_n \setminus \{i(m)\}\}$.
It follows from \eqref{CondP} that
\begin{align*}
\P(B_m)
&= \E\bigg[{\bf 1}_{B_{m-1}}\,
\P\bigg\{\frac{|v(x_{i(m)}) - v(x_{i(m)-1^*})|}{r_n \sqrt{\log(1/r_n)}} \le K\, \bigg| \,
v(x_j) : j \in I'_n \setminus \{i(m)\} \bigg\}\bigg]\\
& \le \P(B_{m-1}) \exp\left(- \frac{C2^{-nK^2/(2c_2)}}{\sqrt{n}}\right).
\end{align*}
Note that $N \sim C 2^{nQ}$. By induction, we get that
\begin{align*}
\P(L_n \le K) = \P(B_N)
\le \exp\left(- C 2^{nQ} \frac{2^{-nK^2/(2c_2)}}{\sqrt{n}}\right).
\end{align*}
Since $Q - K^2/(2c_2) > 0$, we have $\sum_{n=1}^\infty \P(L_n \le K) < \infty$.
Hence, by the Borel--Cantelli lemma, 
\[\liminf_{n \to \infty} L_n \ge K \quad \text{a.s.} \]
Now, we let $K \uparrow \sqrt{2Q c_2}$ along a rational sequence 
to get \eqref{MC:LB2}. This proves \eqref{MC-Delta}.

We turn to the proof of \eqref{MC-d}. Let 
\[ \tilde L(r) := \sup_{x, y \in T:\, 0 < d(x, y) \le r}
\frac{|v(x) - v(y)|}{d(x, y)\sqrt{\log(d(x, y)^{-1})}}. \]
By Lemma \ref{0-1law}, $\lim_{r\to0+}\tilde L(r) = \kappa'$ a.s.~for some constant
$0 \le \kappa' \le \infty$.
Moreover, \eqref{MC-Delta} and Lemma \ref{Lem:DMX17} imply that
\begin{equation*}
\lim_{r\to0+}\tilde L(r) \ge \sqrt{2Q c_2} \, c_1^{-1} \quad \text{a.s.}
\end{equation*}
It remains to prove that
\begin{equation}\label{MC-d:UB}
\lim_{r\to0+}\tilde L(r) \le \sqrt{2Q} \quad \text{a.s.}
\end{equation}
This can be proved by a similar argument that led to \eqref{MC:UB} above.
In fact, the proof of \eqref{MC:UB1} above shows that for any fixed $b > 1$,
\begin{equation}\label{MC-d:UB1}
\limsup_{n \to \infty} \sup_{x, y \in T:\, d(x, y) \le b^{-n}} 
\frac{|v(x) - v(y)|}{b^{-n}\sqrt{\log b^n}} \le C \quad \text{a.s.}
\end{equation}
Let $\eps > 0$, $1 < a < 2$ and $b$ be such that $a < b < a^{1+\eps/(2Q)}$.
We modify the above approximation argument as follows.
For fixed $n$, choose any minimal cover $\{B_d(z_{n,i}, b^{-n}) \}_i$ of $T$
consisting of $d$-balls with centers $z_{n,i} \in T$, and define $I_n = \{z_{n,i}\}_i$.
For each $z_{n, i}$, define $M_{n,i} = \{ h_{n,i,m} \}_m$ such that
$\{ B_d(z_{n,i} + h_{n,i,m}, b^{-n}) \}_m$ is a minimal cover 
of $B_d(z_{n,i}, (1+\eps)a^{-n})$.
Consider the event
\[ A_n = \Big\{\max_{z_{n,i} \in I_n} \max_{h_{n,i,m} \in M_{n,i}} |v(z_{n,i} + h_{n,i,m}) 
- v(z_{n,i})| > (1+\eps) a^{-n} \sqrt{2(Q+\eps) \log a^n}\Big\}. 
\]
Since $d$ is comparable to $\Delta$ by Assumption \ref{a3} and Lemma
 \ref{Lem:DMX17}, the cardinality of $I_n$ is $\le Cb^{nQ}$ and 
that of $M_{n,i}$ is $\le C(b/a)^{nQ}$.
Also, $\|v(z_{n,i} + h_{n,i,m}) - v(z_{n,i})\|_{L^2} \le (1+\eps)a^{-n}$.
Then, as before, we can show that $\sum_{n=1}^\infty \P(A_n) < \infty$ and
\begin{equation}\label{MC-d:UB2}
\limsup_{n\to\infty} \max_{z_{n,i} \in I_n} \max_{h_{n,i,m} \in M_{n,i}} 
\frac{|v(z_{n,i} + h_{n,i,m}) - v(z_{n,i})|}{a^{-n}\sqrt{\log a^n}} 
\le (1+\eps) \sqrt{2(Q+\eps)} 
\quad \text{a.s.}
\end{equation}
Consider $x, y \in T$ such that $a^{-n-1} \le d(x, y) \le a^{-n}$.
Then, we can find $z_{n,i} = z_{n,i}(x) \in I_n$ such that $d(z_{n,i}, x) \le b^{-n}$.
Since $d(z_{n,i}, y) \le d(z_{n,i}, x) + d(x, y) \le b^{-n} + a^{-n} \le (1+\eps) a^{-n}$
for $n$ large, we can also find $h_{n,i,m} = h_{n,i,m}(x, y) \in M_{n,i}$ such that
$d(z_{n,i} + h_{n,i,m}, y) \le b^{-n}$. Then, by \eqref{MC-d:UB1} and $b > a$, 
we get
\[ \limsup_{n\to \infty} \sup_{x, y \in T:\, a^{-n-1} \le d(x, y) \le a^{-n}}
\frac{|v(z_{n,i}) - v(x)|}{a^{-n}\sqrt{\log a^n}} = 0 \quad \text{a.s.} \]
and
\[ \limsup_{n\to \infty} \sup_{x, y \in T:\, a^{-n-1} \le d(x, y) \le a^{-n}}
\frac{|v(z_{n,i} + h_{n,i,m}) - v(y)|}{a^{-n}\sqrt{\log a^n}} = 0 \quad \text{a.s.} \]
Combining this with \eqref{MC-d:UB2}, we get
\[ 
\limsup_{n \to \infty} \sup_{x, y \in T:\, a^{-n-1}\le d(x, y) \le a^{-n}} 
\frac{|v(y) - v(x)|}{a^{-n} \sqrt{\log a^n}} \le (1+\eps) \sqrt{2(Q+\eps)} 
\quad \text{a.s.} \]
which implies that
\[ \limsup_{r \to 0+} \sup_{x, y \in T:\, 0 < d(x, y) \le r} 
\frac{|v(y) - v(x)|}{d(x, y) \sqrt{\log(d(x, y)^{-1})}} \le (1+\eps)a \sqrt{2(Q+\eps)} 
\quad \text{a.s.} \]
Letting $\eps \downarrow 0$ and $a \downarrow 1$ yields \eqref{MC-d:UB}.
This completes the proof of Theorem \ref{Thm:MC}.
\end{proof}

\section{Linear SPDEs driven by fractional-colored noise}

In this section, we give an application of our main results to a class of linear SPDEs. 
Consider the equation
\begin{equation}\label{Eq:SHE}
\frac{\partial}{\partial t} u(t, x) = \mathscr{L}u(t,x) + \dot{W}(t, x), \quad t \ge 0,\ x \in \mathbb{R}^d,
\end{equation}
with zero initial condition $u(0, x) = 0$. Here, $\mathscr{L}$ is the infinitesimal 
generator of a symmetric L\'{e}vy process $X = \{ X(t), t \ge 0 \}$ taking values 
in $\R^d$, and $\dot{W}$ is a fractional-colored (or white-colored) centered Gaussian 
noise with Hurst index $1/2 \le H < 1$ in time and spatial covariance $f$, i.e.,
\[ \E[\dot{W}(t, x)\dot{W}(s,y)] = \rho_H(t-s) f(x-y), \]
where
\[ \rho_H(t-s) = \begin{cases}
a_H |t-s|^{2H-2} & \text{if }1/2 < H < 1,\\
\delta(t-s) & \text{if } H = 1/2,
\end{cases}\]
and where $a_H = H(2H-1)$ and $\delta$ is the delta function. When $X$ is a Brownian motion, 
$\mathscr{L}$ is the Laplace operator and \eqref{Eq:SHE} is the stochastic heat equation.
Furthermore, when $H = 1/2$, 
\eqref{Eq:SHE} is the stochastic heat equation considered in \cite{DMX17}.

The existence of the solution to \eqref{Eq:SHE} has been studied in \cite{BT08, HSWX} 
(and in \cite{D99} for $H = 1/2$), and the space-time regularity of the solution has been 
studied in \cite{TX17, HSWX}. Herrell et al.~\cite{HSWX} used the the idea of string 
processes of Mueller and Tribe \cite{MT02} and showed that $\{u(t, x), t \ge 0, x \in \R^k\}$ 
admits the decomposition
\begin{equation}\label{SHE:de}
u(t, x) = U(t, x) - Y(t, x),
\end{equation}
where $\{U(t, x),  t \ge 0, x \in \R^k\}$ has stationary increments and satisfies 
the property of 
strong LND, 
while $\{Y(t, x),  t \ge 0, x \in \R^k\}$ has smooth sample
paths. Consequently, certain regularity properties of $u(t, x)$ can be deduced 
from those of $U(t, x)$. Now we can deal with $\{u(t, x), t \ge 0, x \in \R^k\}$ directly.

We assume that $f$ is the Fourier transform of a tempered measure $\mu$ 
which is absolutely continuous with respect to the Lebesgue measure
with density $h$, i.e., $\mu(d\xi) = h(\xi)d\xi$.
A typical example is $h(\xi) = |\xi|^{-\beta}$, $0 < \beta < d$. 
In this case, $f$ is called the Riesz kernel: $f(x) = C|x|^{\beta-d}$,
where $C$ is some suitable constant depending on $\beta$ and $d$; 
see \cite[\S V]{S}.

Let $\Psi(\xi)$ be the characteristic exponent of $X$ given by
\[ \E [e^{i  \xi \cdot X(t)}] = e^{-t\Psi(\xi)}, \quad t \ge 0,\ \xi \in \R^d. \]
Note that $\Psi(\xi) = \Psi(-\xi) \ge 0$ for all $\xi \in \R^d$ 
since $X$ is assumed to be symmetric.
Assume that $X(t)$ has a probability density function given by
\begin{equation}\label{X:pdf}
p_t(x) = \frac{1}{(2\pi)^d} \int_{\R^d} e^{-i \xi \cdot x}e^{-t\Psi(\xi)} d\xi, 
\quad t > 0,\ x \in \R^d.
\end{equation}

Let $T_0 > 0$. Recall that the Gaussian noise $W$ defines a linear isometry 
from the Hilbert space completion $\mathcal{HP}$ of the space $C^\infty_c
((0, T_0) \times \R^d)$ of compactly supported smooth functions with respect 
to the inner product $\langle \cdot, \cdot \rangle_{\mathcal{HP}}$ into the Gaussian 
space in $L^2(\P)$:
\begin{align}
\begin{aligned}\label{W:iso}
\varphi & \mapsto  W(\varphi) := \int_0^{T_0}\int_{\R^d} \varphi(s, y) W(ds, dy),\\
& \qquad \E[W(\varphi)W(\psi)] = \langle \varphi, \psi\rangle_{\mathcal{HP}}.
\end{aligned}
\end{align}
For test functions $\varphi, \psi$ on $(0, T_0) \times \R^d$, the inner product 
${\langle \varphi, \psi\rangle}_{\mathcal{HP}}$ is defined by
\begin{align}
\begin{aligned}\label{HP}
&{\langle \varphi, \psi\rangle}_{\mathcal{HP}}
 := \int_0^{T_0} \int_0^{T_0} ds\, dr \int_{\R^d} \int_{\R^d} dy\, dz\, \varphi(s, y) 
\rho_H(s-r) f(y-z) \psi(r, z)\\
& = b_H \int_{\R} d\tau \, |\tau|^{1-2H} \int_{\R^d} \int_{\R^d} dy\, dz\, f(y-z) 
\mathcal{F}(\varphi(\cdot, y){\bf 1}_{[0, T_0]}(\cdot))(\tau) 
\overline{\mathcal{F}(\psi(\cdot, z){\bf 1}_{[0, T_0]}(\cdot))(\tau)}\\ 
& = c_{H,d} \int_{\R} d\tau \, |\tau|^{1-2H} \int_{\R^d} d\xi \, h(\xi)
\mathcal{F}(\varphi{\bf 1}_{[0, T_0]})(\tau, \xi) \overline{\mathcal{F}(\psi{\bf 1}_{[0, T_0]})(\tau, \xi)},
\end{aligned}
\end{align}
where $b_H = a_H(2^{2(1-H)}\sqrt{\pi}\,)^{-1}\Gamma(H-1/2)/\Gamma(1-H)$
and $c_{H,d} = b_H (2\pi)^{-d}$, for $1/2 < H < 1$; see \cite{BT08, HSWX}.
In fact, the equalities in \eqref{HP} also hold for $H = 1/2$ with 
$b_{1/2} = (2\pi)^{-1}$.
In the above, $\mathcal{F}$ denotes the Fourier transform defined, 
for any functions $g: \R \to \C$ and $\varphi: \R^{1+d} \to \C$, by
\[ \mathcal{F}g(\tau) = \int_\R e^{-i\tau s} g(s)\, ds, \quad 
\mathcal{F}\varphi(\tau, \xi) = \int_{\R^{1+d}} e^{-i\tau s - i\xi \cdot y} \varphi(s, y) \,ds\,dy. \]
If follows from \cite{HSWX} and \cite{D99} 
(for $1/2 < H < 1$ and $H = 1/2$ respectively) that if
\begin{equation}\label{Dalang_cond}
\int_{\R^d} \frac{\mu(d\xi)}{1 + \Psi(\xi)^{2H}} < \infty,
\end{equation}
then \eqref{Eq:SHE} has a random field solution on $[0, T_0]$ which is given by
\[ u(t, x) = \int_0^t \int_{\R^d} p_{t-s}(x-y) W(ds, dy). \]

Denote $G_{t, x}(s, y) = p_{t-s}(x-y) {\bf 1}_{[0, t]}(s)$. It follows from \eqref{HP} that 
for any $a_1, \dots, a_n \in \R$, for any $t^1, \dots, t^n \in [0, T_0]$ and
$x^1, \dots, x^n \in \R^d$, we have
\begin{align}\label{Eq:var_lin_comb}
\E\Bigg[ \bigg(\sum_{j=1}^n a_j u(t^j, x^j) \bigg)^2 \Bigg] 
= c_{H, d} \int_{\R} d\tau \, |\tau|^{1-2H} \int_{\R^d} d\xi\, h(\xi) \, 
|\mathcal{F}G(\tau, \xi)|^2,
\end{align}
where $G = \sum_{j=1}^n a_j G_{t^j, x^j}$. 
Note that $p_t(\cdot)$ is equal to the inverse Fourier transform of 
$\xi \mapsto e^{-t\Psi(\xi)}$ since $\Psi(\xi) = \Psi(-\xi)$.
Hence, it can be verified that the Fourier transform of $G_{t, x}(\cdot, \cdot)$ is
\begin{equation}\label{Eq:FT_g}
\mathcal{F}G_{t, x}(\tau, \xi) =  \frac{e^{-i \xi \cdot x}(e^{-i\tau t} - e^{-t \Psi(\xi)})}
{\Psi(\xi) - i\tau}, \quad \tau \in \R,\ \xi \in \R^d.
\end{equation}

Dalang et al.~\cite{DMX17} have established a harmonizable representation for the 
solution of the stochastic heat equation
\begin{equation*}
\frac{\partial}{\partial t} u(t, x) = \Delta u(t, x) + \dot W(t, x),
\end{equation*}
where $\dot W$ is a spatially homogeneous Gaussian noise that is white in time 
and colored in space.  In the following, we follow the approach of \cite{DMX17} to 
establish a similar representation for the solution of equation \eqref{Eq:SHE} driven 
by the fractional-colored Gaussian noise.

Let $\tilde W_1, \tilde W_2$ be independent space-time Gaussian white 
noise on $\R\times \R^d$. Let $\tilde W = \tilde W_1 + i \tilde W_2$.
For each $(t, x) \in [0, T_0] \times \R^d$, define
\begin{equation}\label{Def:v}
v(t, x) = c_{H,d}^{1/2}\, \Re
\iint_{\R \times \R^d} \mathcal{F}G_{t, x}(\tau, \xi)\, |\tau|^{\frac{1-2H}{2}} 
h^{\frac 1 2}(\xi) \tilde W(d\tau, d\xi).
\end{equation}
The following lemma verifies that $v(t, x)$ has the same law as the solution $u(t, x)$ 
of equation \eqref{Eq:SHE}. We will call \eqref{Def:v} the harmonizable representation 
of $u(t, x)$.

\begin{lemma}\label{SHE:law}
The Gaussian random field $v = \{v(t, x), (t, x) \in [0, T_0] \times \R^d \}$ 
has the same law as the solution $u = \{ u(t, x), (t, x) \in [0, T_0] \times \R^d \}$ 
of equation \eqref{Eq:SHE}.
\end{lemma}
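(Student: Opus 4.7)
The plan is to prove Lemma \ref{SHE:law} by showing that $v$ and $u$ have the same covariance function, since both are centered Gaussian random fields. Equivalently, by polarization, it suffices to show that for any $n\ge 1$, any scalars $a_1,\dots,a_n\in\R$, and any points $(t^1,x^1),\dots,(t^n,x^n)\in[0,T_0]\times\R^d$, the random variables $\sum_j a_j u(t^j,x^j)$ and $\sum_j a_j v(t^j,x^j)$ have the same variance. The variance of the former is already computed in \eqref{Eq:var_lin_comb}, so the task reduces to computing the variance of the latter and matching it with the right-hand side of \eqref{Eq:var_lin_comb}.

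For the computation of $\mathrm{Var}\bigl(\sum_j a_j v(t^j,x^j)\bigr)$, I would set
\[
F(\tau,\xi) = \mathcal{F}G(\tau,\xi)\,|\tau|^{(1-2H)/2}h^{1/2}(\xi),\qquad G=\sum_{j=1}^n a_j G_{t^j,x^j},
\]
so that by linearity $\sum_j a_j v(t^j,x^j) = c_{H,d}^{1/2}\,\Re\iint F\,d\tilde W$. Writing $F=\Re F + i\,\Im F$ and $\tilde W = \tilde W_1 + i\tilde W_2$ with independent real-valued space-time Gaussian white noises $\tilde W_1,\tilde W_2$, the real part of the complex stochastic integral equals
\[
\Re\iint F\,d\tilde W = \iint \Re F\,d\tilde W_1 - \iint \Im F\,d\tilde W_2,
\]
which is a sum of two independent real Wiener integrals. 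Its variance is therefore
\[
\iint (\Re F)^2 \, d\tau\, d\xi + \iint (\Im F)^2 \, d\tau\, d\xi = \iint |F|^2\,d\tau\,d\xi.
\]
Multiplying by the squared prefactor $c_{H,d}$ and substituting the definition of $F$ gives exactly the right-hand side of \eqref{Eq:var_lin_comb}, completing the matching of variances.

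The main technical point to handle carefully is the validity of the complex Wiener integral in \eqref{Def:v}: one needs $F\in L^2(\R\times\R^d)$, which is exactly the finiteness of the expression \eqref{Eq:var_lin_comb}. For $n=1$ this is equivalent, via \eqref{Eq:FT_g} and the elementary bound $|e^{-i\tau t}-e^{-t\Psi(\xi)}|^2\lesssim 1\wedge(\tau^2+\Psi(\xi)^2)t^2$, to Dalang's integrability condition \eqref{Dalang_cond}, which is assumed to hold. Once $L^2$-integrability is secured, the Wiener integral is well defined in $L^2(\P)$, the variance computation above is rigorous, and the identification of the laws of $u$ and $v$ follows. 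The only subtlety is bookkeeping the complex versus real white noise; everything else is a routine Gaussian moment computation using \eqref{HP}.
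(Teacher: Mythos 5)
Your proposal is correct and follows essentially the same route as the paper: both arguments reduce the lemma to matching the second-order structure of the two centered Gaussian fields, using the spectral form of the inner product \eqref{HP} (equivalently \eqref{Eq:var_lin_comb}) on one side and the isometry of the (complex) Wiener integral in \eqref{Def:v} on the other. The paper simply computes the covariance $\E[v(t,x)v(s,y)]=\langle G_{t,x},G_{s,y}\rangle_{\mathcal{HP}}=\E[u(t,x)u(s,y)]$ directly rather than passing through variances of linear combinations, which is the same argument up to polarization.
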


\begin{proof}
It is clear that $v$ is Gaussian.
By \eqref{HP}, for any $(t, x), (s, y) \in [0, T_0] \times \R^d$,
\begin{align*}
\E[v(t, x) v(s, y)] 
& = \iint_{\R \times \R^d} \mathcal{F}G_{t, x}(\tau, \xi)\, 
\overline{\mathcal{F}G_{s, y}(\tau, \xi)}\, |\tau|^{1-2H} h(\xi) \, d\tau \, d\xi\\
& = \langle G_{t, x}, G_{s, y} \rangle_{\mathcal{HP}}\\
& = \E[u(t, x) u(s, y)].
\end{align*}
Hence $v$ and $u$ have the same law.
\end{proof}

From now on, suppose that there exist positive finite constants $c_\Psi$ and $C_\Psi$ 
such that
\begin{equation}\label{Psi:cond}
c_\Psi|\xi|^\alpha \le \Psi(\xi) \le C_\Psi |\xi|^\alpha 
\text{ for all } \xi \in \R^d, \quad \text{where } 0 < \alpha \le 2,
\end{equation}
and there exist positive finite constants $c_h$ and $C_h$ such that
\begin{equation}\label{h:cond}
c_h |\xi|^{-\beta} \le h(\xi) \le C_h |\xi|^{-\beta}
\text{ for all } \xi \in \R^d, \quad \text{where } 0 < \beta < d.
\end{equation}
Define
\begin{equation*}\label{theta}
\theta_1 := H - \frac{d-\beta}{2\alpha} \quad \text{and} \quad 
\theta_2 := \alpha \theta_1 = \alpha H - \frac{d-\beta}{2}.
\end{equation*}
These are the H\"older exponents of $u(t, x)$ in time and space respectively.
By \eqref{Dalang_cond}, if $\beta > d-2\alpha H$, or equivalently, $\theta_1 > 0$,
then \eqref{Eq:SHE} has a solution.
Consider the following metric on $\R \times \R^d$:
\begin{equation}\label{SHE:Delta}
\Delta((t, x), (s, y)) = |t-s|^{\theta_1} + \sum_{j=1}^d |x_j - y_j|^{\theta_2}.
\end{equation}
We always have $\theta_1 < 1$ since $H < 1$ and $\beta < d$.
Furthermore, we assume the following condition:
\begin{equation}\label{theta:cond}
\theta_1 > 0 \quad \text{and} \quad \theta_2 < 1.
\end{equation}
The condition $\theta_2 < 1$ is equivalent to $\beta < d-2\alpha H + 2$.

We now verify Assumptions \ref{a1} and \ref{a2} for the solution of \eqref{Eq:SHE}
using the harmonizable representation \eqref{Def:v}.

\begin{lemma}\label{Lem:SHE-a1}
Suppose $\Psi$ and $h$ satisfy conditions \eqref{Psi:cond} and \eqref{h:cond} 
respectively. Suppose $\theta_1$ and $\theta_2$ satisfy condition \eqref{theta:cond}.
Let $T$ be a compact rectangle in $(0, \infty) \times \R^d$.
Then the Gaussian random field $\{v(A, t, x), A \in \mathscr{B}(\R_+), (t, x) \in T \}$ 
defined by
\begin{equation}\label{Eq:SHE-HR}
v(A, t, x) = c_{H,d}^{1/2} \, \Re \iint_{\{(\tau, \xi) : \max(|\tau|^{\theta_1}, |\xi|^{\theta_2}) \in A\}}
\mathcal{F}G_{t, x}(\tau, \xi)\, |\tau|^{\frac{1-2H}{2}} h^{\frac 1 2} (\xi)
\tilde W(d\tau, d\xi)
\end{equation}
satisfies Assumption \ref{a1}(a). 
Moreover, there exists a finite constant $c_0$ such that for all $0 \le a < b \le \infty$
and all $(t_0, x_0), (t, x) \in T$,
\begin{align}
\begin{aligned}\label{Eq:SHE-a1}
&{\|v([a, b), t, x) - v(t, x) - v([a, b), t_0, x_0) + v(t_0, x_0)\|}_{L^2}\\
&\le c_0 \bigg( a^{\gamma_1} |t - t_0| + a^{\gamma_2}\sum_{j=1}^d |x_j - x_{0, j}| + b^{-1}\bigg),
\end{aligned}
\end{align}
where $\gamma_1 = \theta_1^{-1} - 1$ and $\gamma_2 = \theta_2^{-1} - 1$.
In particular, Assumption \ref{a1}(b) is satisfied for $a_0 = 0$.
\end{lemma}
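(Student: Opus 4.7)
The plan is to dispose of (a) by direct inspection of \eqref{Eq:SHE-HR} and reduce \eqref{Eq:SHE-a1} to a deterministic integral estimate on the complement in frequency space of $\{\max(|\tau|^{\theta_1}, |\xi|^{\theta_2}) \in [a, b)\}$ via the $L^2$-isometry of the complex stochastic integral with respect to $\tilde W$.

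For (a): the map $A \mapsto \{(\tau, \xi) \in \R \times \R^d : \max(|\tau|^{\theta_1}, |\xi|^{\theta_2}) \in A\}$ sends disjoint Borel sets of $\R_+$ to disjoint subsets of $\R \times \R^d$, so independent scattering and the independence of $v(A, \cdot)$ and $v(B, \cdot)$ for disjoint $A, B$ both follow from the white-noise property of $\tilde W$. Taking $A = \R_+$ exhausts $\R \times \R^d$, so \eqref{Def:v} and Lemma \ref{SHE:law} give $v(\R_+, t, x) = v(t, x)$.

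For \eqref{Eq:SHE-a1}, I rewrite the left-hand side as $\|v(R, t, x) - v(R, t_0, x_0)\|_{L^2}$ with $R = \R_+ \setminus [a, b)$, and apply the isometry to obtain
\[
\|v(R, t, x) - v(R, t_0, x_0)\|_{L^2}^2 = c_{H, d} \iint_E |\mathcal{F}G_{t, x} - \mathcal{F}G_{t_0, x_0}|^2 |\tau|^{1-2H} h(\xi)\, d\tau\, d\xi,
\]
where $E = E_< \cup E_\ge$ with $E_< = \{\max(|\tau|^{\theta_1}, |\xi|^{\theta_2}) < a\}$ and $E_\ge = \{\max(|\tau|^{\theta_1}, |\xi|^{\theta_2}) \ge b\}$. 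Telescoping $\mathcal{F}G_{t, x} - \mathcal{F}G_{t_0, x_0}$ through $\mathcal{F}G_{t_0, x}$ and using the elementary inequalities $|e^{iu} - e^{iv}| \le \min(2, |u - v|)$ (componentwise for $e^{-i\xi \cdot x} - e^{-i\xi \cdot x_0}$) together with $|e^{-t\Psi(\xi)} - e^{-t_0 \Psi(\xi)}| \le \Psi(\xi) |t - t_0|$, I bound the integrand on $E_<$ by a constant times $\sum_{j=1}^d |\xi_j|^2 |x_j - x_{0, j}|^2 / (\Psi(\xi)^2 + \tau^2) + |t - t_0|^2$, and on $E_\ge$ by $16/(\Psi(\xi)^2 + \tau^2)$.

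The last step is the scaling computation, which is the main obstacle. Under \eqref{Psi:cond} and \eqref{h:cond}, substituting $\xi = \tau^{1/\alpha} \eta$ on $\{|\tau| \ge b^{1/\theta_1}\}$ (or $\tau = |\xi|^\alpha s$ on $\{|\xi| \ge b^{1/\theta_2}\}$) collapses the relevant exponent to $-2\theta_1$ in $\tau$ or $-2\theta_2$ in $\xi$, by way of the power-counting identities $(d - \beta)/\alpha + 2 - 2H = 2(1 - \theta_1)$ and $d - \beta + 2\alpha(1 - H) = 2(\alpha - \theta_2)$; so the high-frequency integral over $E_\ge$ has size $O(b^{-2})$. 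The analogous restricted scaling on $E_<$, together with the componentwise $|\xi_j|^2$ weight, produces the contributions $a^{2(\theta_1^{-1}-1)} |t - t_0|^2$ and $a^{2(\theta_2^{-1}-1)} |x_j - x_{0, j}|^2$. Taking square roots, summing via the triangle inequality, and reading off the exponents yields \eqref{Eq:SHE-a1} with $\gamma_1 = \theta_1^{-1} - 1$ and $\gamma_2 = \theta_2^{-1} - 1$. The condition $\theta_2 < 1$ in \eqref{theta:cond} is precisely what makes $\gamma_2 > 0$ (similarly $\theta_1 < 1$ is automatic and gives $\gamma_1 > 0$), which allows $a_0 = 0$; convergence of the iterated integrals at $\xi = 0$ and at $\tau = \infty$ further requires $\beta < d$ and $H \in [1/2, 1)$, both built into the hypotheses.
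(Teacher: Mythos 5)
Your proposal is correct and follows essentially the same route as the paper: reduce \eqref{Eq:SHE-a1} via the $L^2$-isometry to frequency integrals over $\{\max(|\tau|^{\theta_1},|\xi|^{\theta_2})<a\}$ and $\{\max(|\tau|^{\theta_1},|\xi|^{\theta_2})\ge b\}$, bound the integrand by elementary increment estimates, and conclude by power counting giving $a^{2\gamma_1}|t-t_0|^2+a^{2\gamma_2}|x-x_0|^2$ and $b^{-2}$. The only differences are cosmetic: you telescope through $\mathcal{F}G_{t_0,x}$ and use Lipschitz bounds where the paper applies the mean value theorem to the real and imaginary parts, and you use anisotropic scaling substitutions where the paper uses polar coordinates with $\rho=r^{\alpha}$; your power-counting identities and the role of $\theta_2<1$ (integrability near the origin, hence $a_0=0$) match the paper's computation.
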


\begin{proof}
It is obvious that $v(A, t, x)$ satisfies part (a) of Assumption \ref{a1}.
For part (b), the proof is similar to that of Lemma 7.3 in \cite{DMX17}:
First,
\begin{align*}
&v([a, b), t, x) - v(t, x) - v([a, b), t_0, x_0) + v(t_0, x_0)\\
&= v([0, a), t_0, x_0) - v([0, a), t, x)
+ v([b, \infty), t_0, x_0) - v([b, \infty), t, x).
\end{align*}
By \eqref{Eq:var_lin_comb} and \eqref{Eq:FT_g},
\begin{align}
\begin{split}\label{Eq:a}
&\E[(v([0, a), t, x) - v([0, a), t_0, x_0))^2]\\
& = C \iint_{D_1(a)} \bigg|\frac{(e^{-i\tau t} - e^{-t\Psi(\xi)}) 
- e^{-i\xi\cdot(x_0-x)}(e^{-i\tau t_0} - e^{-t_0\Psi(\xi)})}{\Psi(\xi) - i\tau}\bigg|^2 
|\tau|^{1-2H} h(\xi)\, d\tau\, d\xi\\
& = C \iint_{D_1(a)} \frac{\varphi_1(t, x, \tau, \xi)^2 + 
\varphi_2(t, x, \tau, \xi)^2}{\Psi(\xi)^2 + |\tau|^2} |\tau|^{1-2H} h(\xi)\, d\tau\, d\xi
\end{split}
\end{align}
and 
\begin{align}
\begin{split}\label{Eq:b}
&\E[(v([b, \infty), t, x) - v([b, \infty), t_0, x_0))^2]\\
& = C \iint_{D_2(b)} \frac{\varphi_1(t, x, \tau, \xi)^2 + \varphi_2(t, x, \tau, \xi)^2}
{\Psi(\xi)^2 + |\tau|^2} |\tau|^{1-2H} h(\xi)\, d\tau\, d\xi,
\end{split}
\end{align}
where
\begin{align*}
& D_1(a) 
= \{ (\tau, \xi) \in \R \times \R^d : \max(|\tau|^{\theta_1}, |\xi|^{\theta_2}) < a \},\\
& D_2(b) 
= \{ (\tau, \xi) \in \R \times \R^d : \max(|\tau|^{\theta_1}, |\xi|^{\theta_2}) \ge b \},\\
&\varphi_1(t, x, \tau, \xi) 
= \cos(\tau t) - e^{-t\Psi(\xi)} - \cos(\xi \cdot(x_0 - x) + \tau t_0) 
+ e^{-t_0\Psi(\xi)}\cos(\xi \cdot (x_0 - x)),\\
&\varphi_2(t, x, \tau, \xi)
= -\sin(\tau t) + \sin(\xi\cdot (x_0 - x) + \tau t_0) - e^{-t_0\Psi(\xi)} \sin(\xi \cdot (x_0 - x)).
\end{align*}
Consider \eqref{Eq:a}.
Note that $\varphi_1(t_0, x_0, \tau, \xi) = 0 = \varphi_2(t_0, x_0, \tau, \xi)$, and
\begin{align*}
|\partial_t \varphi_j| \le |\tau| + \Psi(\xi) \quad \text{and} \quad
|\partial_x \varphi_j| \le 2|\xi|, \quad j = 1, 2.
\end{align*}
Then, by the mean value theorem,
\begin{align*}
&\E[(v([0, a), t, x) - v([0, a), t_0, x_0))^2]\\
& \le C \iint_{D_1(a)} \Big( 4(|\tau|^2 + \Psi(\xi)^2)|t - t_0|^2 + 8|\xi|^2 |x - x_0|^2 \Big) 
\frac{|\tau|^{1-2H} |\xi|^{-\beta}}{\Psi(\xi)^2 + |\tau|^2}  d\tau\, d\xi\\
& = 4 C  |t- t_0|^2 \iint_{D_1(a)} |\tau|^{1-2H} |\xi|^{-\beta} d\tau\, d\xi
+ 8 C |x - x_0|^2 \iint_{D_1(a)}\frac{|\tau|^{1-2H} |\xi|^{2-\beta}}{\Psi(\xi)^2 + |\tau|^2}
d\tau\, d\xi\\
& =: 4C|t- t_0|^2 I_1 + 8C|x-x_0|^2 I_2.
\end{align*}
Using polar coordinates $r = |\xi|$, 
\begin{align*}
I_1 &= C
\iint_{\{(\tau, r) \in \R\times \R_+: \max(|\tau|^{\theta_1}, r^{\theta_2}) < a\}} 
|\tau|^{1-2H} r^{d-\beta-1} d\tau\,dr\\
& \le 
C \int_{-a^{\theta_1^{-1}}}^{a^{\theta_1^{-1}}} d\tau \, |\tau|^{1-2H} 
\int_0^{a^{\theta_2^{-1}}} dr \, r^{d-\beta-1}\\
& = C a^{(2-2H)\theta_1^{-1}+(d-\beta)\theta_2^{-1}}.
\end{align*}
Since $\theta_2 = \alpha \theta_1$ and $\gamma_1 = \theta_1^{-1}-1$, 
we get that $I_1 \le C a^{2\gamma_1}$.

For $I_2$, we use the condition $c|\xi|^\alpha \le \Psi(\xi) \le C|\xi|^\alpha$,
polar coordinates $r = |\xi|$, and symmetry of the integrand in $\tau$ to get that
\begin{align*}
I_2 &\le 
C \iint_{\{(\tau, r) \in \R_+^2 :\max(\tau^{\theta_1}, r^{\theta_2}) < a\}} 
\frac{\tau^{1-2H}r^{d-\beta-1}}{(c^2 \wedge 1)(r^{2\alpha} + \tau^2)} d\tau\,dr.
\end{align*}
Putting $\rho = r^\alpha$ and $|z|$ the Euclidean norm of 
$z = (\tau, \rho)$ in $\R^2$, we get that
\begin{align*}
I_2 & \le C \iint_{\{(\tau, \rho) \in \R_+^2 : \max(\tau, \rho) < a^{\theta_1^{-1}}\}} 
\frac{\tau^{1-2H}\rho^{\alpha^{-1}(d-\beta+2) - 1}}{\rho^2 + \tau^2} d\tau\, d\rho\\
& \le C \iint_{\{z \in \R_+^2 : |z| < \sqrt 2 a^{\theta_1^{-1}}\}} 
|z|^{-2H+\alpha^{-1}(d-\beta+2)-2} dz\\
& = C a^{\theta_1^{-1}(-2H + \alpha^{-1} (d-\beta+2))} = C a^{2\gamma_2}.
\end{align*}
Therefore, we have
\begin{equation}\label{BD:a}
\E[(v([0, a), t, x) - v([0, a), t_0, x_0))^2]
\le C \left( a^{2\gamma_1}|t-t_0|^2 + a^{2\gamma_2}|x-x_0|^2 \right).
\end{equation}

For \eqref{Eq:b}, we can use the bounds $|\varphi_1| \le 4$ and $|\varphi_2| \le 3$ 
to deduce that
\begin{align*}
\E[(v([b, \infty), t, x) - v([b, \infty), t_0, x_0))^2]
\le C \iint_{D_2(b)} \frac{|\tau|^{1-2H} |\xi|^{-\beta}}{c^2|\xi|^{2\alpha} + |\tau|^2}
d\tau\, d\xi.
\end{align*}
To estimate the above integral, we split $D_2(b)$ into two parts:
\begin{equation*}
D_2(b) = \big\{ |\tau|^{\theta_1} \le |\xi|^{\theta_2}, |\xi|^{\theta_2} \ge b \big\} \cup 
\big\{ |\tau|^{\theta_1} > |\xi|^{\theta_2}, |\tau|^{\theta_1} \ge b \big\}.
\end{equation*}
Passing to polar coordinates, we have
\begin{align*}
&\iint_{D_2(b)} \frac{|\tau|^{1-2H} |\xi|^{-\beta}}{c^2|\xi|^{2\alpha} + |\tau|^2}
d\tau\, d\xi\\
& \le C \iint_{\{ |\tau|^{\theta_1} \le r^{\theta_2}, r^{\theta_2} \ge b \}} 
\frac{|\tau|^{1-2H}r^{d-\beta-1}}{c^2r^{2\alpha}} d\tau\, dr
+ C \iint_{\{ |\tau|^{\theta_1} > r^{\theta_2}, |\tau|^{\theta_1} \ge b \}}
\frac{|\tau|^{1-2H}r^{d-\beta-1}}{|\tau|^2} d\tau\, dr\\
& = C\int_{b^{\theta_2^{-1}}}^\infty dr\, r^{d-\beta-1-2\alpha} 
\int_{-r^\alpha}^{r^\alpha} d\tau\, |\tau|^{1-2H}
+ C \int_{b^{\theta_1^{-1}}}^\infty d\tau \, |\tau|^{-1-2H} 
\int_0^{|\tau|^{\alpha^{-1}}} dr\, r^{d-\beta-1}\\
& = C b^{-2}.
\end{align*}
We have shown that
\begin{equation}\label{BD:b}
\E[(v([b, \infty), t, x) - v([b, \infty), t_0, x_0))^2] \le C b^{-2}.
\end{equation}
Therefore, \eqref{Eq:SHE-a1} follows immediately from \eqref{BD:a} and \eqref{BD:b}.
\end{proof}

The following result shows that $u(t, x)$ satisfies strong LND with respect to 
the metric $\Delta$ defined in \eqref{SHE:Delta} above, thus 
verifies Assumption \ref{a2}.

\begin{lemma}\label{Lem:SHE-a2}
Suppose $\Psi$ and $h$ satisfy conditions \eqref{Psi:cond} and \eqref{h:cond} 
respectively. Suppose $\theta_1$ and $\theta_2$ satisfy \eqref{theta:cond}. 
Let $T$ be a compact rectangle in $(0, \infty) \times \R^d$.
Then there exists a constant $c_2 > 0$ such that for any $n \ge 1$, for any
$(t, x), (t^1, x^1), \dots, (t^n, x^n) \in T$, we have
\begin{equation*}
\mathrm{Var}(u(t, x)|u(t^1, x^1), \dots, u(t^n, x^n)) 
\ge c_2 \min_{1 \le j \le n}\Delta^2((t,x),(t^j, x^j)).
\end{equation*}
In particular, this implies that 
$\|u(t, x) - u(s, y)\|_{L^2} \ge \sqrt{c_2} \Delta((t, x), (s, y))$ for all $(t, x), (s, y) \in T$.
\end{lemma}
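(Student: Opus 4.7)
I will use Lemma \ref{SHE:law} to replace $u$ by the harmonizable $v$ and then apply a Fourier-analytic test function argument in the spirit of \cite{X09}. Since $v(t,x) - \sum_j a_j v(t^j, x^j) = W(G_{t,x} - \sum_j a_j G_{t^j, x^j})$, the isometry property of $W$ on $\mathcal{HP}$ gives
\begin{equation*}
\mathrm{Var}\bigl(v(t,x)\mid v(t^j, x^j),\, 1 \le j \le n\bigr) = \inf_{a \in \R^n} \bigl\| G_{t,x} - \textstyle\sum_j a_j G_{t^j, x^j} \bigr\|_{\mathcal{HP}}^2,
\end{equation*}
and by Hilbert-space duality this infimum equals the supremum of $|\langle G_{t,x}, w\rangle_{\mathcal{HP}}|^2$ over $w \in \mathcal{HP}$ with $\|w\|_{\mathcal{HP}} \le 1$ and $w \perp G_{t^j,x^j}$ in $\mathcal{HP}$ for every $j$. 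Set $r := \min_{1 \le j \le n} \Delta((t,x),(t^j, x^j))$; the objective is to exhibit such a $w$ with $|\langle G_{t,x}, w\rangle_{\mathcal{HP}}| \gtrsim r\, \|w\|_{\mathcal{HP}}$. The \emph{in particular} statement then follows by applying the strong LND inequality with $n = 1$.

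Pick a nonnegative $\varphi \in C_c^\infty(\R^{1+d})$ with $\varphi(0) > 0$ supported in the unit $\Delta$-ball, and form the anisotropically rescaled translate $\varphi_r(s, y) := \varphi\bigl(r^{-1/\theta_1}(s - t),\, r^{-1/\theta_2}(y - x)\bigr)$, so that $\mathrm{supp}\,\varphi_r$ is a $\Delta$-ball of radius $\asymp r$ around $(t,x)$, which is disjoint from each $(t^j, x^j)$ for $r$ small. Let $w$ be the $\mathcal{HP}$-projection of $\varphi_r$ onto $\mathrm{span}\{G_{t^j,x^j}\}^\perp$, so that $\|w\|_{\mathcal{HP}} \le \|\varphi_r\|_{\mathcal{HP}}$. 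The required lower bound then reduces to the twin estimates
\begin{equation*}
|\langle G_{t,x}, \varphi_r\rangle_{\mathcal{HP}}| \gtrsim r\, \|\varphi_r\|_{\mathcal{HP}} \quad\text{and}\quad |\langle G_{t^j,x^j}, \varphi_r\rangle_{\mathcal{HP}}| \ll r\, \|\varphi_r\|_{\mathcal{HP}} \ \text{for all}\ j,
\end{equation*}
which combine to yield $|\langle G_{t,x}, w\rangle_{\mathcal{HP}}| \gtrsim r\, \|w\|_{\mathcal{HP}}$. Both estimates are carried out via the Fourier formula \eqref{HP} for $\mathcal{HP}$ and the explicit expression \eqref{Eq:FT_g}, followed by the anisotropic rescaling $\tau = r^{-1/\theta_1}\sigma$, $\xi = r^{-1/\theta_2}\eta$ which identifies the critical frequency scale associated with $r$.

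The main obstacle is the non-oscillatory \emph{tail} term $-e^{-i\xi\cdot x}e^{-t\Psi(\xi)}/(\Psi(\xi) - i\tau)$ in \eqref{Eq:FT_g}, which breaks stationarity of increments and distinguishes this setting from the purely harmonizable stationary-increment models of \cite{X09, LX10}. Because $T$ is compact in $(0,\infty)\times\R^d$, we have $t \ge t_{\min} > 0$ on $T$; combined with \eqref{Psi:cond} this yields $e^{-t\Psi(\xi)} \le e^{-c_\Psi t_{\min}|\xi|^\alpha}$, so at the critical scale $|\xi| \asymp r^{-1/\theta_2}$ (for which $|\xi|^\alpha \asymp r^{-\alpha/\theta_2} = r^{-1/\theta_1} \to \infty$) the tail decays superpolynomially in $r^{-1}$ and contributes negligibly to the Fourier pairing. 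The oscillatory principal part then delivers the required $r$-scaling, while the second estimate uses the $\Delta$-separation $|t-t^j|^{\theta_1} + |x-x^j|^{\theta_2} \ge r$ to force the phase $e^{i(t-t^j)\tau + i(x-x^j)\cdot\xi}$ to complete at least a full oscillation over $\mathrm{supp}\,\mathcal{F}\varphi_r$, giving cancellation via integration by parts.
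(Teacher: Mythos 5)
Your overall strategy (reduce to the harmonizable field, pair against an anisotropically rescaled bump at scale $r$, and exploit compactness of $T$ in $(0,\infty)\times\R^d$ to tame the non-oscillatory part of \eqref{Eq:FT_g}) is the right one, but the pivotal reduction step is flawed. You take $w=(I-P)\varphi_r$, where $P$ is the $\mathcal{HP}$-projection onto $\mathrm{span}\{G_{t^j,x^j}\}$, and claim that the two estimates $|\langle G_{t,x},\varphi_r\rangle_{\mathcal{HP}}|\gtrsim r\|\varphi_r\|_{\mathcal{HP}}$ and $|\langle G_{t^j,x^j},\varphi_r\rangle_{\mathcal{HP}}|\ll r\|\varphi_r\|_{\mathcal{HP}}$ combine to give $|\langle G_{t,x},w\rangle_{\mathcal{HP}}|\gtrsim r\|w\|_{\mathcal{HP}}$. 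They do not: writing $\langle G_{t,x},w\rangle_{\mathcal{HP}}=\langle G_{t,x},\varphi_r\rangle_{\mathcal{HP}}-\langle PG_{t,x},\varphi_r\rangle_{\mathcal{HP}}$ with $PG_{t,x}=\sum_j b_jG_{t^j,x^j}$, the coefficients $b_j$ (equivalently, the optimal $a_j$ in the conditional variance) are not bounded uniformly in $n$ or in the configuration of the points, which may be nearly linearly dependent; so per-$j$ smallness of $\langle G_{t^j,x^j},\varphi_r\rangle_{\mathcal{HP}}$ gives no control of $\sum_j b_j\langle G_{t^j,x^j},\varphi_r\rangle_{\mathcal{HP}}$. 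The unknown coefficients can only be eliminated if the pairings with every $G_{t^j,x^j}$ vanish \emph{exactly}, and that cannot be arranged in the $\mathcal{HP}$ inner product with a compactly supported $\varphi_r$, because the kernels $\rho_H$ and $f$ in \eqref{HP} have full support. Your second ``twin estimate'' is also not believable as stated: the $\Delta$-separation is only of order $r$, i.e.\ exactly the scale of $\varphi_r$, so the phase $e^{i(t-t^j)\tau+i(x-x^j)\cdot\xi}$ varies by $O(1)$ over the effective support of $\mathcal{F}\varphi_r$ and integration by parts produces no small factor; moreover $e^{-t^j\Psi(\xi)}$ is not small at low frequencies, which carry heavy weight $|\tau|^{1-2H}|\xi|^{-\beta}$.

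The paper's proof avoids all of this by never using the weighted inner product for the pairing. Starting from \eqref{Eq:var_lin_comb}, it bounds the variance below by Cauchy--Schwarz against the \emph{unweighted} Fourier-domain integral
$I=\iint\big[\mathcal{F}G_{t,x}-\sum_j a_j\mathcal{F}G_{t^j,x^j}\big](\tau,\xi)\,e^{i\xi\cdot x}e^{i\tau t}\widehat f_r(\tau)\widehat g_r(\xi)\,d\tau\,d\xi$,
where $f_r,g_r$ are bump functions compactly supported in physical space at time scale $r^\alpha$ and space scale $r$ (with $r=\min_j(|t-t^j|^{1/\alpha}\vee|x-x^j|)$, comparable to your $\min_j\Delta^{1/\theta_2}$). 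Inverting the Fourier transform turns every $a_j$-term into values of $f_r,g_r$ at the separated points, which vanish identically because of the support restriction, and all the tail terms coming from $e^{-t^j\Psi(\xi)}$ vanish as well because $f_r$ is supported in $[-\rho r^\alpha,\rho r^\alpha]$ with $\rho r^\alpha$ smaller than the lower bound of $t$ on $T$; hence $I=(2\pi)^{1+d}r^{-\alpha-d}$ exactly, for \emph{all} choices of $a_j$. The spectral weight enters only afterwards, in the Cauchy--Schwarz step, where a scaling computation of $\iint|\widehat f_r\widehat g_r|^2(C_\Psi^2|\xi|^{2\alpha}+|\tau|^2)|\tau|^{2H-1}|\xi|^\beta\,d\tau\,d\xi$ yields the lower bound $Cr^{2\theta_2}$. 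If you want to salvage your duality formulation, you must build this exact-vanishing mechanism into the construction of the test element; smallness alone cannot close the argument.
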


\begin{proof}
We may assume that $T = [a, a'] \times [-b, b]^d$, 
where $0 < a < a' < \infty$ and $0 < b < \infty$.
It suffices to show that there exists a positive constant $C$ such that
\[\E\Bigg[ \bigg( u(t, x) - \sum_{j=1}^n a_j u(t^j, x^j) \bigg)^2 \Bigg] \ge Cr^{2\theta_2},\]
for any $n \ge 1$, any $(t, x), (t^1, x^1), \dots, (t^n, x^n) \in T$, 
and any $a_1, \dots, a_n \in \R$, where
\[ r = \min_{1 \le j \le n}(|t - t^j|^{1/\alpha} \vee |x - x^j|). \]
From \eqref{Eq:var_lin_comb} and \eqref{Eq:FT_g}, we see that
\begin{align}\label{Eq:var_integral}
&\E\Bigg[ \bigg( u(t, x) - \sum_{j=1}^n a_j u(t^j, x^j) \bigg)^2 \Bigg]\\
& \ge K_0 \int_{\R} d\tau \int_{\R^d} d\xi \, 
\bigg|e^{-i \xi \cdot x}(e^{-i\tau t} - e^{-t \Psi(\xi)}) 
- \sum_{j=1}^n a_j e^{-i \xi \cdot x^j}(e^{-i\tau t^j} - e^{-t^j \Psi(\xi)})\bigg|^2 
\frac{|\tau|^{1-2H}|\xi|^{-\beta}}{C_\Psi^2|\xi|^{2\alpha} + |\tau|^2},
\notag
\end{align}
where $K_0 = c_{H,d}\, c_h$.
Let $M$ be a finite constant such that 
$|t - t'|^{1/\alpha} \vee |x-x'| \le M$ for all $(t, x), (t',x') \in T$.
Let $\rho = \min\{a/M^\alpha, 1\}$.
Choose and fix two nonnegative smooth test functions
$f: \R \to \R_+$ and $g: \R^d \to \R_+$ 
which vanish outside $[-\rho, \rho]$ and the unit ball respectively, and 
satisfy $f(0) = g(0) = 1$.
Let $f_r(\tau) = r^{-\alpha} f(r^{-\alpha}\tau)$, $g_r(\xi) = r^{-d} g(r^{-1} \xi)$,
and denote the Fourier transforms of $f_r$ and $g_r$ by 
$\widehat{f_r}$ and $\widehat{g_r}$ respectively.
Consider the integral
\begin{align*}
I := 
\int_{\R} d\tau \int_{\R^d} d\xi \bigg[e^{-i \xi \cdot x}(e^{-i\tau t} - e^{-t \Psi(\xi)}) 
- \sum_{j=1}^n a_j e^{-i \xi \cdot x^j}(e^{-i\tau t^j} - e^{-t^j \Psi(\xi)})\bigg] 
e^{i \xi \cdot x} e^{i\tau t} \widehat{f}_r(\tau) \widehat{g}_r(\xi).
\end{align*}
By inverse Fourier transform and \eqref{X:pdf}, we have
\begin{align*}
I = (2\pi)^{1+d}\bigg[f_r(0) g_r(0) - &f_r(t) (p_t \ast g_r)(0) \\
&- \sum_{j=1}^n a_j \Big( f_r(t-t^j) g_r(x-x^j) - 
f_r(t) (p_{t^j} \ast g_r)(x-x^j)\Big)\bigg].
\end{align*}
By the definition of $r$, for every $j$, either $|t-t^j| \ge r^\alpha$ or $|x-x^j| \ge r$, 
thus $f_r(t-t^j) g_r(x-x^j) = 0$. Moreover, since $t/r^\alpha \ge a/M^\alpha \ge \rho$, 
we have $f_r(t) = 0$ and hence
\begin{equation}\label{Eq:I}
I = (2\pi)^{1+d} r^{-\alpha-d}.
\end{equation}

On the other hand, by the Cauchy--Schwarz inequality and \eqref{Eq:var_integral},
\begin{align*}
I^2 \le \frac{1}{K_0} \,
\E\bigg[ \bigg( u(t, x) - \sum_{j=1}^n a_j u(t^j, x^j) \bigg)^2 \bigg] 
\int_{\R} \int_{\R^d} \big|\widehat{f}_r(\tau) \widehat{g}_r(\xi)\big|^2 
\big(C_\Psi^2|\xi|^{2\alpha}+|\tau|^2\big)|\tau|^{2H-1} |\xi|^{\beta} d\tau \, d\xi.
\end{align*}
Note that $\widehat{f}_r(\tau) = \widehat{f}(r^\alpha \tau)$ and 
$\widehat{g}_r(\xi) = \widehat{g}(r\xi)$. Then by scaling, we have
\begin{align*}
&\int_{\R} \int_{\R^d} \big|\widehat{f}_r(\tau) \widehat{g}_r(\xi)\big|^2 
\big(C_\Psi^2|\xi|^{2\alpha}+|\tau|^2\big)|\tau|^{2H-1} |\xi|^{\beta} d\tau \, d\xi\\
& = r^{-2\alpha -2\alpha H -\beta -d} \int_{\R} \int_{\R^d} 
\big|\widehat{f}(\tau) \widehat{g}(\xi)\big|^2 
\big(C_\Psi^2|\xi|^{2\alpha}+|\tau|^2\big)|\tau|^{2H-1} |\xi|^{\beta} d\tau \, d\xi\\
& =:  r^{-2\alpha -2\alpha H -\beta -d} C_0,
\end{align*}
where $C_0$ is a finite constant since $\widehat{f}$ and $\widehat{g}$ are 
rapidly decreasing functions. It follows that
\begin{equation}\label{Eq:I^2}
I^2 \le \frac{C_0}{K_0}\, r^{-2\alpha -2\alpha H -\beta -d} \,\,
\E\Bigg[ \bigg( u(t, x) - \sum_{j=1}^n a_j u(t^j, x^j) \bigg)^2 \Bigg].
\end{equation}
Combining \eqref{Eq:I} and \eqref{Eq:I^2}, we conclude that
\[ \E\Bigg[ \bigg( u(t, x) - \sum_{j=1}^n a_j u(t^j, x^j) \bigg)^2 \Bigg] 
\ge \frac{(2\pi)^{2+2d}K_0}{C_0}\, r^{2\theta_2}. \]
This completes the proof of Lemma \ref{Lem:SHE-a2}.
\end{proof}

Under conditions \eqref{Psi:cond}, \eqref{h:cond} and \eqref{theta:cond},
Lemmas \ref{Lem:DMX17}, \ref{Lem:SHE-a1} and \ref{Lem:SHE-a2} imply that 
for any compact rectangle $T$ in $(0, \infty) \times \R^d$, there exist positive finite 
constants $c_1$ and $c_3$ such that for all $(t, x), (s, y) \in T$,
\begin{equation}\label{SHE:DeltaBD}
c_3 \Delta((t, x), (s, y)) \le \|u(t, x) - u(s, y)\|_{L^2} \le c_1 \Delta((t, x), (s, y)).
\end{equation}
See also \cite[Theorem 4.1]{HS}.

By applying our results, we obtain the following theorem, which strengthens the 
regularity results in Propositions 3.7 and 3.10 of \cite{HSWX} and provides more 
precise information and bounds for the limiting constants.

\begin{theorem}\label{Thm:fcSHE}
Suppose $\Psi$ and $h$ satisfy conditions \eqref{Psi:cond} and \eqref{h:cond} respectively. 
Suppose $\theta_1$ and $\theta_2$ satisfy condition \eqref{theta:cond}. 
Then the following statements hold.
\medskip
\begin{enumerate}
\item[(i)] Chung-type law of the iterated logarithm:
For any fixed $(t_0, x_0) \in (0, \infty) \times \R^d$, 
\begin{equation*}
\liminf_{r \to 0+} \sup_{\substack{t > 0,\, x \in \R^d:\\ \Delta((t, x), (t_0, x_0)) \le r}} 
\frac{|u(t, x) - u(t_0, x_0)|}{r(\log\log(1/r))^{-1/Q}} = \kappa_1^{1/Q} \quad \text{a.s.},
\end{equation*}
where $Q = \frac{1}{\theta_1} + \frac{d}{\theta_2}$ and $\kappa_1$ is a positive finite 
constant given by Theorem \ref{Thm:ChungLIL}.
\medskip
\item[(ii)] The exact local modulus of continuity:
For any fixed $(t_0, x_0) \in (0, \infty) \times \R^d$,
\begin{equation*}
\qquad \lim_{r \to 0+} \sup_{\substack{t > 0,\, x \in \R^d:\\ 
0 < d((t, x), (t_0, x_0)) \le r}} 
\frac{|u(t, x) - u(t_0, x_0)|}
{d((t, x), (t_0, x_0))\sqrt{\log\log(d((t, x), (t_0, x_0))^{-1})}} = \sqrt 2
\quad \text{a.s.},
\end{equation*}
where $d((t, x), (t_0, x_0)) = \|u(t, x) - u(t_0, x_0)\|_{L^2}$, and
\begin{equation*}
\qquad \lim_{r \to 0+} \sup_{\substack{t > 0, \, x \in \R^d:\\ 
0 < \Delta((t, x), (t_0, x_0)) \le r}} 
\frac{|u(t, x) - u(t_0, x_0)|}
{\Delta((t, x), (t_0, x_0))\sqrt{\log\log(\Delta((t, x), (t_0, x_0))^{-1})}} = \kappa_2 
\quad \text{a.s.}
\end{equation*}
for some positive finite constant $\kappa_2$ such that
\begin{equation*}
\sqrt{2}\,c_3 \le \kappa_2 \le \sqrt{2}\, c_1,
\end{equation*}
where $c_1, c_3$ are constants satisfying \eqref{SHE:DeltaBD}, with $T$ being any 
neighborhood of $(t_0, x_0)$.
\medskip
\item[(iii)] The exact uniform modulus of continuity:
For any compact rectangle $T$ in $(0, \infty) \times \R^d$,
\begin{equation*}
\lim_{r \to 0+} \sup_{\substack{(t, x), (s, y) \in T:\\ 0 < d((t, x), (s, y)) \le r}} 
\frac{|u(t, x) - u(s, y)|}{d((t, x), (s, y))\sqrt{\log(d((t, x), (s, y))^{-1})}} = \kappa_3
\quad \text{a.s.}
\end{equation*}
and
\begin{equation*}
\lim_{r \to 0+} \sup_{\substack{(t, x), (s, y) \in T:\\ 0 < \Delta((t, x), (s, y)) \le r}} 
\frac{|u(t, x) - u(s, y)|}{\Delta((t, x), (s, y))\sqrt{\log(\Delta((t, x), (s, y))^{-1})}} = \kappa_4
\quad \text{a.s.}
\end{equation*}
for some positive finite constants $\kappa_3$, $\kappa_4$ satisfying
\begin{equation*}
\sqrt{2Q c_2} \, c_1^{-1} \le \kappa_3 \le \sqrt{2Q}
\quad \text{and} \quad \sqrt{2Q c_2} \le \kappa_4 \le \sqrt{2Q}\, c_1,
\end{equation*}
where $Q = \frac{1}{\theta_1} + \frac{d}{\theta_2}$, 
$c_1$ is the constant in \eqref{SHE:DeltaBD} and 
$c_2$ is the constant in Lemma \ref{Lem:SHE-a2}.
\end{enumerate}
\end{theorem}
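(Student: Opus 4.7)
The plan is straightforward: reduce everything to the abstract theorems already proved in Sections~4--6 by invoking the harmonizable representation and the LND estimates established in Section~7. Since Lemma~\ref{SHE:law} says $v(t,x)$ from \eqref{Def:v} has the same law as the mild solution $u(t,x)$, all of the distributional regularity statements transfer from $v$ to $u$. Thus it suffices to verify Assumptions~\ref{a1}, \ref{a2}, \ref{a3} for the Gaussian random field $v$ indexed by $(t,x) \in T$, where $T$ is chosen to be a compact rectangle in $(0,\infty) \times \R^d$ (in particular a neighborhood of the given base point $(t_0,x_0)$ in parts (i) and (ii)).

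First I would fix $T = [a,a']\times[-b,b]^d \subset (0,\infty)\times\R^d$ and identify the parameters of the abstract framework with those of the SPDE. The index dimension is $k = 1+d$; from Lemma~\ref{Lem:SHE-a1} the exponents are $\gamma_1 = \theta_1^{-1}-1$ in the time variable and $\gamma_j = \theta_2^{-1}-1$ for $j=2,\dots,k$ in the spatial variables, so that $\alpha_1 = \theta_1$, $\alpha_2 = \cdots = \alpha_k = \theta_2$ and hence $Q = \sum_{j=1}^k \alpha_j^{-1} = \theta_1^{-1} + d\,\theta_2^{-1}$. The metric $\Delta$ from \eqref{Def:Delta} coincides with the one in \eqref{SHE:Delta}. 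Lemma~\ref{Lem:SHE-a1} together with the decomposition \eqref{Eq:SHE-HR} gives Assumption~\ref{a1} (with $a_0 = 0$), and Lemma~\ref{Lem:SHE-a2} gives Assumption~\ref{a2}; the lower bound in \eqref{SHE:DeltaBD} (a direct corollary of Lemma~\ref{Lem:SHE-a2} applied with $n=1$) supplies Assumption~\ref{a3} with the constant $c_3$, while the upper bound is Lemma~\ref{Lem:DMX17} with constant $c_1$.

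With these verifications in hand, the three parts are immediate applications of the main theorems. For (i), Theorem~\ref{Thm:ChungLIL} applied to $v$ at the point $x_0 := (t_0,x_0) \in T$ yields the Chung-type LIL with constant $\kappa_1^{1/Q}$ and gives the claimed value of $Q$. For (ii), Theorem~\ref{Thm:LIL} yields both limits: the one with the canonical metric $d$ equals $\sqrt 2$ exactly, and the one with $\Delta$ equals $\kappa_2$ with $\sqrt 2\, c_3 \le \kappa_2 \le \sqrt 2\, c_1$, where the constants come from \eqref{SHE:DeltaBD}. For (iii), Theorem~\ref{Thm:MC} yields the uniform moduli and the two-sided bounds on $\kappa_3,\kappa_4$ stated in \eqref{MC:const}.

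There is no real obstacle left; the only point that requires care is to ensure that the compact rectangle $T$ lies in $(0,\infty)\times\R^d$, since the proof of Lemma~\ref{Lem:SHE-a2} (and hence the strong LND property) requires $t\ge a>0$, and the abstract upper bound for the small ball probability in Proposition~\ref{Prop:SB} and the lower bound in Theorem~\ref{Thm:MC} are proved under the assumption that the interior of the reference sub-rectangle does not contain the origin. This is automatic here because $T \subset (0,\infty)\times\R^d$ and, for parts (i)--(ii), one restricts to any compact neighborhood of $(t_0,x_0)$ that avoids the hyperplane $t=0$. With that observation the proofs of (i), (ii), (iii) reduce to quoting Theorems~\ref{Thm:ChungLIL}, \ref{Thm:LIL}, and \ref{Thm:MC}, respectively, applied to $v \stackrel{d}{=} u$.
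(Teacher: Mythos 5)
Your proposal is correct and follows essentially the same route as the paper: transfer the statements from $u$ to the harmonizable representation $v$ via Lemma \ref{SHE:law}, verify Assumptions \ref{a1}--\ref{a3} through Lemmas \ref{Lem:SHE-a1} and \ref{Lem:SHE-a2} (with \eqref{SHE:DeltaBD} giving Assumption \ref{a3}), and then quote Theorems \ref{Thm:ChungLIL}, \ref{Thm:LIL} and \ref{Thm:MC}. Your added remark on choosing the compact rectangle $T\subset(0,\infty)\times\R^d$ so that the strong LND estimate applies is a sensible clarification but not a deviation from the paper's argument.
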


\begin{proof}
By Lemma \ref{SHE:law}, (i)--(iii) hold if and only if they hold
for the Gaussian random field $v(t, x)$ defined in \eqref{Def:v}.
By Lemmas \ref{Lem:SHE-a1} and \ref{Lem:SHE-a2}, 
$v(t, x)$ satisfies Assumptions \ref{a1}, \ref{a2} and \ref{a3}.
Therefore, the desired results follows from Theorems \ref{Thm:ChungLIL},
\ref{Thm:LIL} and \ref{Thm:MC}.
\end{proof}

\begin{remark}
The constants $\kappa_1$ and $\kappa_2$ are independent of the point 
$(t_0, x_0)$. This is due to the decomposition \eqref{SHE:de} above and 
the fact that the random field $\{U(t, x), t \ge 0, x \in \R^k\}$ has stationary 
increments \cite{HSWX}.
\end{remark}

In the theorem below, we consider the special case that $\Psi(\xi) = |\xi|^\alpha$,
which corresponds to the equation \eqref{Eq:SHE} with $\mathscr L$ being the
fractional Laplacian $-(-\Delta)^{\alpha/2}$.
We are able to obtain the exact constants for 
the LIL in time variable and space variable respectively.
This result strengthens Corollaries 3.8 and 3.9 of \cite{HSWX}.

\begin{theorem}
Suppose $\Psi(\xi) = |\xi|^\alpha$ and $h(\xi) = |\xi|^{-\beta}$, where
$0 < \alpha \le 2$, $0 < \beta < d$.
Suppose $\theta_1$ and $\theta_2$ satisfy condition \eqref{theta:cond}.
Then, for any fixed $(t_0, x_0) \in (0, \infty) \times \R^d$, almost surely,
\begin{equation}\label{SHE:LILt}
\limsup_{\delta \to 0} \frac{|u(t_0+\delta, x_0) - u(t_0, x_0)|}
{|\delta|^{\theta_1}\sqrt{\log\log(1/|\delta|)}} = 
\left(2c_{H,d} \iint_{\R\times\R^d} |e^{-i\tau} - 1|^2
\frac{|\tau|^{1-2H}|\xi|^{-\beta}}{|\tau|^2 + |\xi|^{2\alpha}}\, d\tau\,d\xi\right)^{1/2}
\end{equation}
and
\begin{equation}\label{SHE:LILx}
\limsup_{|\eps| \to 0}
\frac{|u(t_0, x_0+\eps) - u(t_0, x_0)|}{|\eps|^{\theta_2}\sqrt{\log\log(1/|\eps|)}} = 
\left(2c_{H,d} \iint_{\R\times\R^d}
|e^{-i\xi_1} - 1|^2 \frac{|\tau|^{1-2H}|\xi|^{-\beta}}{|\tau|^2 + |\xi|^{2\alpha}}\,
d\tau\,d\xi\right)^{1/2}.
\end{equation}
\end{theorem}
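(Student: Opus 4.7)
By Lemma \ref{SHE:law} it suffices to prove both identities for the harmonizable representation $v$. The plan has two parts: (a) establish the exact variance asymptotics
\[
C_t := \lim_{\delta\to 0}\frac{\|v(t_0+\delta,x_0)-v(t_0,x_0)\|_{L^2}^{2}}{|\delta|^{2\theta_1}}, \qquad C_s := \lim_{|\eps|\to 0}\frac{\|v(t_0,x_0+\eps)-v(t_0,x_0)\|_{L^2}^{2}}{|\eps|^{2\theta_2}},
\]
each of which equals one half of the respective integral in \eqref{SHE:LILt} or \eqref{SHE:LILx}; then (b) combine this with Theorem \ref{Thm:LIL} to conclude that the stated $\limsup$ equals $\sqrt{2C_t}$ (resp.~$\sqrt{2C_s}$).

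For part (a), formulas \eqref{Eq:var_lin_comb}--\eqref{Eq:FT_g} express $\|v(t_0+\delta,x_0)-v(t_0,x_0)\|_{L^2}^{2}$ as an integral over $(\tau,\xi)$ with integrand $|e^{-i\tau t_0}(e^{-i\tau\delta}-1)-e^{-t_0|\xi|^\alpha}(e^{-\delta|\xi|^\alpha}-1)|^{2}(\tau^{2}+|\xi|^{2\alpha})^{-1}|\tau|^{1-2H}|\xi|^{-\beta}$. The change of variables $\tau=\delta^{-1}\tau'$, $\xi=\delta^{-1/\alpha}\xi'$ extracts the exact factor $\delta^{2\theta_1}$ by homogeneity. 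After rescaling, $|e^{-i\tau't_0/\delta}|=1$ while $e^{-t_0|\xi'|^\alpha/\delta}\to 0$ pointwise in $\xi'\neq 0$, so the integrand converges pointwise to $|e^{-i\tau'}-1|^{2}(\tau'^{2}+|\xi'|^{2\alpha})^{-1}|\tau'|^{1-2H}|\xi'|^{-\beta}$. A dominated-convergence argument using $|A-B|^{2}\le 2|A|^{2}+2|B|^{2}$ and $e^{-t_0|\xi'|^\alpha/\delta}\le 1$ then yields $C_t$; integrability of the resulting $\delta$-independent majorant under \eqref{theta:cond} follows from the vanishings $|e^{-i\tau'}-1|^{2}\sim\tau'^{2}$ and $|e^{-|\xi'|^\alpha}-1|^{2}\sim|\xi'|^{2\alpha}$ at the origin (absorbing the singularities of $|\tau'|^{1-2H}$ and $|\xi'|^{-\beta}$) together with the condition $\theta_1>0$ at infinity. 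The spatial asymptotic is analogous, with $\tau=|\eps|^{-\alpha}\tau'$ and $\xi=|\eps|^{-1}\xi'$; the direction of $\eps$ can be rotated onto the first coordinate axis by the rotational invariance of the limiting weight $|\tau'|^{1-2H}|\xi'|^{-\beta}(\tau'^{2}+|\xi'|^{2\alpha})^{-1}\,d\tau'\,d\xi'$, producing $|e^{-i\xi'_1}-1|^{2}$ in the integrand for $C_s$.

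For part (b), the upper bound is a direct specialization of Theorem \ref{Thm:LIL} in the canonical metric (whose hypotheses hold by Lemmas \ref{Lem:SHE-a1}, \ref{Lem:SHE-a2}, and \ref{Lem:DMX17}): restricting its supremum to the one-parameter family $\{(t_0+\delta,x_0)\}$ and using $d((t_0+\delta,x_0),(t_0,x_0))\sim\sqrt{C_t}|\delta|^{\theta_1}$ together with $\log\log|\delta|^{-\theta_1}\sim\log\log|\delta|^{-1}$ gives the upper bound $\sqrt{2C_t}$. For the matching lower bound I adapt the proof of \eqref{LIL:Eq1}--\eqref{LIL:Eq2} in Theorem \ref{Thm:LIL}, replacing its general-direction test sequence with the one-directional sequence $x_n=(t_0+\delta_n,x_0)$ where $\delta_n$ is chosen so that $\Delta(x_n,(t_0,x_0))=\exp(-(n^\eta+n^{1+\eta}))$ for small $\eta>0$. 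The estimates \eqref{tildev}--\eqref{L2-vn} there depend only on Assumption \ref{a1} (via Lemma \ref{Lem:SHE-a1}) and on $\|v(x)-v(y)\|_{L^2}\ge\sqrt{c_2}\,\Delta(x,y)$ (via Lemma \ref{Lem:SHE-a2}), so they transfer to the one-directional setting unchanged. Independence of the pieces $v_n$ across $n$, a standard Gaussian lower-tail estimate, and the second Borel--Cantelli lemma then produce the matching lower bound $\sqrt{2C_t}$ along the subsequence $\delta_n$. The spatial case \eqref{SHE:LILx} is handled identically with $x_n=(t_0,x_0+\eps_n e_1)$. The principal technical obstacle is the dominated-convergence step in (a): constructing a $\delta$-uniform integrable majorant requires case analysis in the four regimes $(|\tau'|,|\xi'|)$ small/large, with $\theta_1>0$ and $\theta_2<1$ ensuring integrability at infinity and the vanishings of $|e^{-i\tau'}-1|^{2}$ and $|e^{-|\xi'|^\alpha}-1|^{2}$ at the origin absorbing the weight singularities; once that is done, the LIL itself is essentially a transcription of Theorem \ref{Thm:LIL} restricted to one-directional increments.
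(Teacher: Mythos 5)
Your proposal is correct and follows essentially the same route as the paper: establish the exact variance asymptotics $\|u(t_0+\delta,x_0)-u(t_0,x_0)\|_{L^2}^2\sim c_{H,d}|\delta|^{2\theta_1}\iint|e^{-i\tau}-1|^2\,w$ (and its spatial analogue, after a rotation using the radial weight) by the scaling $\tau\mapsto\delta^{-1}\tau$, $\xi\mapsto\delta^{-1/\alpha}\xi$ plus dominated convergence, and then transfer the constant through Theorem \ref{Thm:LIL}. The only cosmetic difference is that the paper simply applies Theorem \ref{Thm:LIL} to the one-parameter processes $\{u(t,x_0)\}$ and $\{u(t_0,x)\}$ on shrinking neighborhoods (their hypotheses holding by restriction of Lemmas \ref{Lem:SHE-a1} and \ref{Lem:SHE-a2}), so your separate re-run of the lower-bound Borel--Cantelli argument along the one-directional sequence, while valid, is not needed.
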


\begin{proof}
Let $\kappa_5$ and $\kappa_6$ denote the quantity on the right-hand sides of
\eqref{SHE:LILt} and \eqref{SHE:LILx} respectively.
Fix $(t_0, x_0) \in (0, \infty) \times \R^d$.
We claim that
\begin{equation}\label{SHE:LILc1}
\left\|(u(t_0+s, x_0) - u(t_0, x_0)\right\|_{L^2}
= |s|^{\theta_1} \big(\kappa_5 + o(1)\big)
\quad \text{as } s \to 0,
\end{equation}
and
\begin{equation}\label{SHE:LILc2}
\left\|(u(t_0, x_0 + y) - u(t_0, x_0)\right\|_{L^2}
= |y|^{\theta_2} \big(\kappa_6 + o(1) \big)
\quad \text{as } |y| \to 0.
\end{equation}
Once the claims are proved to be true, we can consider a sequence of 
neighborhoods converging to the point $t_0$ and $x_0$ respectively, and apply 
Theorem \ref{Thm:LIL} to the processes $\{ u(t, x_0), t > 0 \}$ and
$\{ u(t_0, x), x \in \R^d \}$ to obtain \eqref{SHE:LILt} and \eqref{SHE:LILx} respectively.

To prove \eqref{SHE:LILc1}, for $s > 0$, we use \eqref{Eq:var_lin_comb} to get that
\begin{align*}
& \|u(t_0+s, x_0) - u(t_0, x_0)\|_{L^2}^2\\
& = c_{H,d} \iint_{\R\times\R^d}
\big|(e^{-i\tau (t_0+s)} - e^{-(t_0+s)|\xi|^\alpha}) 
- (e^{-i\tau t_0} - e^{- t_0|\xi|^\alpha})\big|^2\,
\frac{|\tau|^{1-2H} |\xi|^{-\beta}}{|\tau|^2 + |\xi|^{2\alpha}} \, d\tau\, d\xi\\
& = c_{H,d} \iint_{\R\times\R^d}
\big|e^{-i\tau t_0}(e^{-i\tau s} - 1) 
- e^{- t_0 |\xi|^\alpha}(e^{- s|\xi|^\alpha} - 1)\big|^2\,
\frac{|\tau|^{1-2H} |\xi|^{-\beta}}{|\tau|^2 + |\xi|^{2\alpha}}\, d\tau\, d\xi.
\end{align*}
Then the change of variables 
$\tau \mapsto s^{-1}\tau$ and $\xi \mapsto s^{-1/\alpha} \xi$ leads to
\begin{align*}
& \|u(t_0+s, x_0) - u(t_0, x_0)\|_{L^2}^2\\
& = c_{H,d} \, s^{2\theta_1} \iint_{\R\times\R^d}
\big|(e^{-i\tau} - 1) 
- e^{s^{-1} (i\tau t_0 - t_0 |\xi|^\alpha)}(e^{- |\xi|^\alpha} - 1)\big|^2\,
\frac{|\tau|^{1-2H} |\xi|^{-\beta}}{|\tau|^2 + |\xi|^{2\alpha}}\, d\tau\, d\xi.
\end{align*}
Similarly, for $s > 0$ small,
\begin{align*}
& \|u(t_0-s, x_0) - u(t_0, x_0)\|_{L^2}^2\\
& = c_{H,d} \,s^{2\theta_1} \iint_{\R\times\R^d}
\big|(e^{-i\tau} - 1) 
- e^{s^{-1} (i\tau (t_0-s) - (t_0-s) |\xi|^\alpha)}(e^{- |\xi|^\alpha} - 1)\big|^2\,
\frac{|\tau|^{1-2H} |\xi|^{-\beta}}{|\tau|^2 + |\xi|^{2\alpha}}\, d\tau\, d\xi.
\end{align*}
Since $0 \le 1 - e^{- |\xi|^\alpha}\le \min(1, |\xi|^\alpha)$ and
\begin{align*}
\iint_{\R\times\R^d} \min(1, |\xi|^{2\alpha})\,
\frac{|\tau|^{1-2H} |\xi|^{-\beta}}{|\tau|^2 + |\xi|^{2\alpha}}\, d\tau\, d\xi < \infty,
\end{align*}
by the dominated convergence theorem, we have
$|s|^{-2\theta_1}\|u(t_0+s, x_0) - u(t_0, x_0)\|_{L^2}^2 \to \kappa_5^2$ as $s \to 0$,
which is exactly \eqref{SHE:LILc1}.

For \eqref{SHE:LILc2}, we let $y \in \R^d \setminus \{0\}$ 
and use \eqref{Eq:var_lin_comb} again to get that
\begin{align*}
& \|u(t_0, x_0 + y) - u(t_0, x_0)\|_{L^2}^2\\
& = c_{H,d} \iint_{\R\times\R^d}
\big| e^{-i\xi \cdot (x_0 + y)} (e^{-i\tau t_0} - e^{-t_0|\xi|^\alpha})
- e^{-i\xi \cdot x_0} (e^{-i\tau t_0} - e^{-t_0|\xi|^\alpha})\big|^2\,
\frac{|\tau|^{1-2H} |\xi|^{-\beta}}{|\tau|^2 + |\xi|^{2\alpha}} \, d\tau\, d\xi\\
& = c_{H,d} \iint_{\R\times\R^d}
\big|(e^{-i\xi \cdot y} - 1)(1 - e^{i\tau t_0 - t_0|\xi|^\alpha})\big|^2\,
\frac{|\tau|^{1-2H} |\xi|^{-\beta}}{|\tau|^2 + |\xi|^{2\alpha}}\, d\tau\, d\xi.
\end{align*}
By the change of variables 
$\tau \mapsto |y|^{-\alpha}\tau$ and $\xi \mapsto |y|^{-1}\xi$, 
the above expression is equal to
\begin{align*}
c_{H,d}\, |y|^{2\theta_2} \iint_{\R\times\R^d}
\big|\big(e^{-i\xi \cdot \frac{y}{|y|}} - 1\big)
\big(1 - e^{|y|^{-\alpha}(i\tau t_0 -t_0|\xi|^\alpha)}\big)\big|^2\,
\frac{|\tau|^{1-2H} |\xi|^{-\beta}}{|\tau|^2 + |\xi|^{2\alpha}}\, d\tau\, d\xi.
\end{align*}
Then, by a rotation of the variable $\xi$ which takes the unit vector 
$\frac{y}{|y|}$ to the basis vector $\mathrm{\bf e}_1 = (1, 0, \dots, 0)$, 
we deduce that
\begin{align*}
& \|u(t_0, x_0 + y) - u(t_0, x_0)\|_{L^2}^2\\
& = c_{H,d}\, |y|^{2\theta_2} \iint_{\R\times\R^d}
\big|\big(e^{-i\xi_1} - 1\big)
\big(1 - e^{|y|^{-\alpha}(i\tau t_0 -t_0|\xi|^\alpha)}\big)\big|^2\,
\frac{|\tau|^{1-2H} |\xi|^{-\beta}}{|\tau|^2 + |\xi|^{2\alpha}}\, d\tau\, d\xi.
\end{align*}
By the dominated convergence theorem, we get \eqref{SHE:LILc2} as $y \to 0$.
The proof is complete.
\end{proof}

\section{Strongly LND anisotropic Gaussian fields with non-stationary increments}

Finally, we construct a class of anisotropic Gaussian random fields
that have strong LND property but do not have stationary increments. 
Let $f: \R^k \to \R$ be a nonnegative function such that for all $\xi \in \R^k$,
\begin{equation}\label{eg2:f}
\frac{C_1}{(\sum_{j=1}^k |\xi_j|^{\alpha_j})^{Q+2}}\le
f(\xi) \le \frac{C_2}{(\sum_{j=1}^k |\xi_j|^{\alpha_j})^{Q+2}},
\end{equation}
where $0 < \alpha_j < 1$, $Q = \sum_{j=1}^k \alpha_j^{-1}$ and
$C_1, C_2$ are positive finite constants. It can be verified that $f$ satisfies
\[
\int_{\R^k} \min\{1, |\xi|^2\} f(\xi) d\xi < \infty.
\]
Define the Gaussian random field $v = \{v(x), x \in \R^k\}$ by
\begin{equation}\label{eg2}
v(x) = \int_{\R^k} \prod_{j=1}^k (e^{ix_j \xi_j} - 1) W(d\xi),
\end{equation}
where $W$ is a centered complex-valued Gaussian random measure whose control 
measure has density $f$, meaning that for all Borel sets $A, B \subset \R^k$,
\[ \E[W(A)\overline{W(B)}] = \int_{A \cap B} f(\xi) d\xi, \quad \text{and} \quad
W(-A) = \overline{W(A)}. \]
This implies that $v$ is real-valued.
Note that $v$ does not have stationary increments.
Still, we can verify that $v$ satisfies Assumptions \ref{a1} and \ref{a2} in Lemmas 
\ref{Lem:8.1} and \ref{Lem:8.2} below.

\begin{lemma}\label{Lem:8.1}
Let $T$ be a compact rectangle in $\R^k$.
Then the process $\{ v(A, x), A \in \mathscr{B}(\R_+), x \in T\}$ 
defined by
\[ v(A, x) = \int_{\{\max_j|\xi_j|^{\alpha_j} \in A\}} 
\prod_{j=1}^k (e^{ix_j \xi_j} - 1) W(d\xi) \]
satisfies Assumption \ref{a1}(a). Moreover, there exists a finite constant $c_0$
such that for all $0 \le a < b \le \infty$ and all $x, y \in T$,
\begin{equation}\label{eg2:approx}
\|v(x) - v([a, b), x) - v(y) + v([a, b), y)\|_{L^2}
\le c_0 \bigg( \sum_{j=1}^k a^{\gamma_j} |x_j - y_j| + b^{-1}\bigg),
\end{equation}
where $\gamma_j = \alpha_j^{-1}-1$.
In particular, Assumption \ref{a1}(b) is satisfied for $a_0 = 0$.
\end{lemma}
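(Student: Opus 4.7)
The plan is to verify Assumption \ref{a1}(a) directly from the definition of $v(A,x)$, then establish the key bound \eqref{eg2:approx}, which will also cover Assumption \ref{a1}(b) (with $a_0=0$, the inequality \eqref{Eq:a1-2} is vacuous since $v([0,0),\cdot) \equiv 0$).

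Part (a) should be almost automatic: $v(A,x)$ is a stochastic integral of a deterministic kernel against $W$ over the preimage set $E_A := \{\xi\in\R^k:\max_j|\xi_j|^{\alpha_j}\in A\}$. Disjoint $A,B\in\mathscr{B}(\R_+)$ produce disjoint $E_A,E_B$ in $\R^k$, and the Gaussian independently-scattered structure of $W$ delivers independence of $v(A,\cdot)$ and $v(B,\cdot)$. The identity $v(\R_+,x)=v(x)$ follows since $\max_j|\xi_j|^{\alpha_j}\ge 0$ for all $\xi$, so $E_{\R_+}=\R^k$.

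The substantive step is \eqref{eg2:approx}. Write
\[
v(x) - v([a,b),x) - v(y) + v([a,b),y) = \int_{D_1(a)\cup D_2(b)} \Bigl[\prod_{j=1}^k(e^{ix_j\xi_j}-1) - \prod_{j=1}^k(e^{iy_j\xi_j}-1)\Bigr] W(d\xi),
\]
where $D_1(a) = \{\max_j|\xi_j|^{\alpha_j}<a\}$ and $D_2(b) = \{\max_j|\xi_j|^{\alpha_j}\ge b\}$. These sets are disjoint, so the two contributions are uncorrelated, and it suffices to bound each $L^2$-norm separately. For the low-frequency piece $D_1(a)$, I use the telescoping identity $\prod A_j - \prod B_j = \sum_j (A_j-B_j)\prod_{i<j}A_i\prod_{i>j}B_i$ with $|e^{ix_j\xi_j}-e^{iy_j\xi_j}|\le|\xi_j|\,|x_j-y_j|$ and $|e^{iz}-1|\le 2$, yielding the pointwise bound $C\sum_j|\xi_j|^2|x_j-y_j|^2$. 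Combined with the upper bound on $f$ from \eqref{eg2:f}, the $L^2$ norm squared of the low-frequency part reduces to $C\sum_j|x_j-y_j|^2 I_j(a)$ where $I_j(a)=\int_{D_1(a)}|\xi_j|^2(\sum_i|\xi_i|^{\alpha_i})^{-Q-2}d\xi$. For the high-frequency piece, I use the trivial bound $|\prod_j(e^{ix_j\xi_j}-1)-\prod_j(e^{iy_j\xi_j}-1)|\le 2^{k+1}$, so the $L^2$ norm squared is at most $C\int_{D_2(b)}f(\xi)d\xi$.

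The main technical step — and the main obstacle — is the scaling computation. Performing the change of variables $\eta_i=|\xi_i|^{\alpha_i}$ (which introduces the Jacobian $d\xi_i = \alpha_i^{-1}\eta_i^{\gamma_i}d\eta_i$), then rescaling $\eta_i = a u_i$ in $I_j(a)$, gives $I_j(a) = a^{2\gamma_j}\cdot C_j$ where
\[
C_j = C\int_{[0,1]^k}\frac{u_j^{3/\alpha_j-1}\prod_{i\ne j}u_i^{1/\alpha_i-1}}{(\sum_i u_i)^{Q+2}}\,du,
\]
and a radial check near the origin shows the integrand is $O(t^{2/\alpha_j-3})$ under uniform scaling $u_i=tv_i$, which is integrable precisely because $\alpha_j<1$. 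An analogous computation for the tail, splitting $D_2(b)$ by which coordinate attains the maximum and using $(\sum_i|\xi_i|^{\alpha_i})^{-Q-2}\le \eta_\ell^{-Q-2}$ when $\eta_\ell=\max_i\eta_i\ge b$, yields $\int_{D_2(b)}f\,d\xi \le C b^{-2}$ after collapsing the powers of $\eta_\ell$ to $\eta_\ell^{-3}$. Summing the two contributions and taking square roots (using $\sqrt{u^2+w^2}\le |u|+|w|$) produces exactly the right-hand side of \eqref{eg2:approx}, completing the proof.
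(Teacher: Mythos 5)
Your proposal is correct and follows essentially the same route as the paper: the same decomposition into the low-frequency region $\{\max_j|\xi_j|^{\alpha_j}<a\}$ and the high-frequency region $\{\max_j|\xi_j|^{\alpha_j}\ge b\}$, the Lipschitz bound $|e^{iz}-e^{iz'}|\le|z-z'|$ with $|e^{iz}-1|\le 2$ on the first and the trivial bound on the second, and a scaling argument giving $a^{2\gamma_j}$ and $b^{-2}$ respectively. The only differences are cosmetic: the paper telescopes $v(x)-v(y)$ coordinate-by-coordinate and evaluates the integrals via the substitutions $\xi_j\mapsto a^{1/\alpha_j}\xi_j$ and $\xi_j\mapsto z_j^{2/\alpha_j}$, whereas you telescope the product inside the integrand and use $\eta_i=|\xi_i|^{\alpha_i}$ with a direct homogeneity computation; the resulting estimates are equivalent.
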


\begin{proof}
It is clear that $v(A, x)$ satisfies Assumption \ref{a1}(a).
For \eqref{eg2:approx}, by writing $v(x) - v(y)$ as the telescoping sum
\begin{align*}
[v(x_1, \dots, x_k) - v(y_1, x_2, \dots, x_k)] &+ [v(y_1, x_2, \dots, x_k) - v(y_1, y_2, x_3, \dots, x_k)]\\
&+ \cdots + [v(y_1, \dots, y_{k-1}, x_k) - v(y_1, \dots, y_k)]
\end{align*}
and similarly for $v([a, b), x) - v([a, b), y)$, it is enough to prove \eqref{eg2:approx}
for $x$ and $y$ that only differ in one coordinate.
By re-arranging coordinates, we only need to consider the case that 
$x = (x_1, \dots, x_k)$ and $y = (y_1, x_2, \dots, x_k)$.
Note that 
\begin{align*}
&v(x) - v([a, b), x) - v(y) + v([a, b), y)\\
&=[v([0, a), x) - v([0, a), y)] + [v([b, \infty), x) - v([b, \infty), y)].
\end{align*}
We estimate the two terms separately.
By $|e^{iz}-e^{iz'}| \le |z-z'|$, $|e^{iz} - 1| \le 2$ and \eqref{eg2:f},
\begin{align*}
&\|v([0, a), x) - v([0, a), y)\|_{L^2}^2\\
&= \int_{\{\max_j |\xi_j|^{\alpha_j} < a\}} 
\bigg| \prod_{j=1}^k (e^{ix_j\xi_j}-1) - 
(e^{iy_1\xi_1} - 1)\prod_{j=2}^k (e^{ix_j\xi_j}-1) \bigg|^2 f(\xi) \, d\xi\\
& \le 2^{2k-2} C_2 |x_1-y_1|^2 \int_{\{\max_j |\xi_j|^{\alpha_j} < a\}} 
\frac{|\xi_1|^2}{(\sum_{j=1}^k |\xi_j|^{\alpha_j})^{Q+2}}\, d\xi.
\end{align*}
Note that $|\xi_1|^2 = (|\xi_1|^{\alpha_1})^{2+{2(1-\alpha_1)}/{\alpha_1}} 
\le (\sum_{j=1}^k|\xi_j|^{\alpha_j})^{2 + {2(1-\alpha_1)}/{\alpha_1}}$.
Then, by the change of variables $\xi_j \mapsto a^{\alpha_j^{-1}}\xi_j$, 
followed by another change $\xi_j \mapsto z_j^{2/\alpha_j}$,
the last integral is equal to
\begin{align*}
&a^{2\alpha_1^{-1}-2} \int_{\{\max_j |\xi_j|^{\alpha_j} < 1\}}
\Big(\sum_{j=1}^k|\xi_j|^{\alpha_j}\Big)^{-Q+ {2(1-\alpha_1)}/{\alpha_1}} d\xi\\
& \le  a^{2\alpha_1^{-1}-2}\prod_{j=1}^k(2/\alpha_j) \int_{\{\max_j |z_j|^2 < 1\}} 
|z|^{-k+{4(1-\alpha_1)}/{\alpha_1}} dz\\
& \le C a^{2\alpha_1^{-1}-2},
\end{align*}
where $C$ is a finite constant, so we get the estimate
$\|v([0, a), x) - v([0, a), y)\|_{L^2}^2\le C a^{2\gamma_j} |x_1-y_1|^2$.
On the other hand, by $|e^{iz} - 1| \le 2$ and \eqref{eg2:f},
\begin{align*}
\|v([b, \infty), x) - v([b, \infty), y)\|_{L^2}^2
\le 2^{2k} C_2 \int_{\{\max_j |\xi_j|^{\alpha_j} \ge b\}} 
\frac{1}{(\sum_{j=1}^k |\xi_j|^{\alpha_j})^{Q+2}}\, d\xi.
\end{align*}
Now, by similar changes of variables $\xi_j \mapsto b^{\alpha_j^{-1}}\xi_j$
and then $\xi_j \mapsto z_j^{2/\alpha_j}$, we get that 
$\|v([b, \infty), x) - v([b, \infty), y)\|_{L^2}^2 \le C b^{-2}$.
Combining the two estimates above finishes the proof of \eqref{eg2:approx}.
\end{proof}

\begin{lemma}\label{Lem:8.2}
Define $\Delta(x, y) = \sum_{j=1}^k |x_j - y_j|^{\alpha_j}$.
Let $T$ be a compact rectangle in $\R^k$ away from the axes.
Then, there exists a positive finite constant $c_2$ such that 
for all $n \ge 1$, for all $x, x^1, \dots, x^n \in T$,
\[ \mathrm{Var}(v(x)|v(x^1), \dots, v(x^n)) \ge 
c_2 \min_{1\le \ell \le n} \Delta^2(x, x^\ell). \]
In particular, this implies that $\|v(x) - v(y)\|_{L^2} \ge \sqrt{c_2} \Delta(x, y)$ 
for all $x, y \in T$.
\end{lemma}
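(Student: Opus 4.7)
The plan is to adapt the Fourier-analytic duality argument used in Lemma \ref{Lem:SHE-a2}. By the $L^2$-projection characterization of conditional variance, it suffices to show that
\begin{equation*}
\E\Big[\Big(v(x) - \sum_{\ell=1}^n a_\ell v(x^\ell)\Big)^2\Big] \ge c_2 r^2
\end{equation*}
for every $a_1,\dots,a_n \in \R$, where $r := \min_{1\le \ell \le n}\Delta(x,x^\ell)$. The isometry defining \eqref{eg2} identifies this expectation with $\int_{\R^k}|g(\xi)|^2 f(\xi)\,d\xi$, where $g(\xi) = \prod_{j}(e^{ix_j\xi_j}-1) - \sum_\ell a_\ell \prod_j(e^{ix^\ell_j\xi_j}-1)$. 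Expanding each product telescopically yields $\prod_j(e^{iy_j\xi_j}-1) = \sum_{S\subseteq[k]}(-1)^{k-|S|}e^{i\xi\cdot y_S}$, where $y_S$ denotes the vector with $j$-th coordinate $y_j$ for $j \in S$ and $0$ otherwise.

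Since $T$ is compact and away from the axes, there exists $\delta_0 > 0$ with $|x_j|\ge \delta_0$ for all $x \in T$ and all $j$; set $r^\star := \delta_0^{\min_j \alpha_j}$ and $r' := \min\{r,r^\star\}$. Choose a nonnegative $\phi \in C^\infty_c(\R^k)$ with $\phi(0)=1$ and $\mathrm{supp}(\phi) \subseteq \{z : \sum_j|z_j|^{\alpha_j} \le 1/2\}$, and define a bump centered at $x$ by
\begin{equation*}
\phi_{r'}(y) := \phi\big((r')^{-1/\alpha_1}(y_1-x_1),\dots,(r')^{-1/\alpha_k}(y_k-x_k)\big),
\end{equation*}
so that $\mathrm{supp}(\phi_{r'}) \subseteq \{y : \Delta(y,x)\le r'/2\}$ and $\phi_{r'}(x)=1$. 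By Fourier inversion, the pairing $I := \int g(\xi)\,\widehat{\phi_{r'}}(\xi)\,d\xi$ collapses to a constant multiple of $\sum_S (-1)^{k-|S|}\big[\phi_{r'}(x_S) - \sum_\ell a_\ell \phi_{r'}(x^\ell_S)\big]$. For every proper $S\subsetneq[k]$, the distances $\Delta(x_S,x)$ and $\Delta(x^\ell_S,x)$ are each at least $r^\star \ge r'$, because at least one coordinate of $x_S$ (respectively $x^\ell_S$) is zero while $|x_j|\ge\delta_0$; and for $S=[k]$ we have $\Delta(x^\ell,x)\ge r\ge r'$. Thus all terms vanish except $\phi_{r'}(x)=1$, giving $|I|=(2\pi)^k$.

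By the weighted Cauchy-Schwarz inequality $|I|^2 \le \int|g|^2 f\,d\xi\cdot\int|\widehat{\phi_{r'}}|^2/f\,d\xi$ together with the lower bound on $f$ in \eqref{eg2:f}, the anisotropic scaling identity $\widehat{\phi_{r'}}(\xi) = (r')^Q e^{\pm i\xi\cdot x}\,\widehat{\phi}\big((r')^{1/\alpha_1}\xi_1,\dots,(r')^{1/\alpha_k}\xi_k\big)$ and the change of variables $\eta_j = (r')^{1/\alpha_j}\xi_j$ give $\int|\widehat{\phi_{r'}}|^2/f\,d\xi \le K (r')^{-2}$ with $K$ finite because $\widehat{\phi}$ is rapidly decreasing. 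Combined with $|I|^2=(2\pi)^{2k}$, this yields $\int|g|^2 f\,d\xi \ge (2\pi)^{2k}K^{-1}(r')^2$; since $r' \ge \min\{1,\,r^\star/\mathrm{diam}_\Delta(T)\}\cdot r$, the desired lower bound follows after adjusting $c_2$, and the final sentence of the lemma is the case $n=1$. The main obstacle is controlling the $2^k$ ``ghost'' terms $\phi_{r'}(x_S)$ and $\phi_{r'}(x^\ell_S)$ produced by the telescoping expansion of $\prod_j(e^{ix_j\xi_j}-1)$; the hypothesis that $T$ is away from the axes is precisely what forces these to vanish, since any proper $S$ introduces a zero coordinate in $x_S$ while $x$ itself has all coordinates bounded away from $0$ uniformly on $T$.
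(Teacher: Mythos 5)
Your argument is correct and is essentially the paper's own proof: the paper likewise pairs $g$ with the Fourier transform of an anisotropically scaled bump localized at $x$ (there a tensor product of one-dimensional bumps $\prod_j e^{-ix_j\xi_j}\widehat\phi^j_r(\xi_j)$, which handles the $-1$ factors without your inclusion--exclusion over $S$), uses the away-from-axes hypothesis together with the separation $r$ to reduce the pairing to the single value at $x$, and concludes with the same weighted Cauchy--Schwarz against the spectral lower bound \eqref{eg2:f}, with the same final scaling in $r$. One small fix: since one can only guarantee $\Delta(x_S,x)\ge\min_j\delta_0^{\alpha_j}$ (which equals $\delta_0^{\max_j\alpha_j}$ when $\delta_0\le 1$), you should take $r^\star=\min_j\delta_0^{\alpha_j}$ rather than $\delta_0^{\min_j\alpha_j}$; this only affects constants and not the structure of the proof.
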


\begin{proof}
We may assume that $a \le |x_j| \le b$ for all $x = (x_1, \dots, x_k) \in T$,
where $0 < a < 1 < b < \infty$ are constants.
It suffices to prove that there exists a positive finite constant $c$ 
such that for all $n \ge 1$, for all $x, x^1, \dots, x^n \in T$ and 
all $a_1, \dots, a_n \in \R$,
\begin{equation}\label{eg2:LND}
\E\bigg[ \Big( v(x) - \sum_{\ell=1}^n a_\ell v(x^\ell) \Big)^2 \bigg] \ge c r^2,
\quad \text{where }r = \min_{1\le \ell \le n} \max_{1\le j\le k} |x_j - x^\ell_j|^{\alpha_j}.
\end{equation}
By \eqref{eg2:f},
\begin{equation}\label{eg2:varLB}
\begin{split}
&\E\bigg[ \Big( v(x) - \sum_{\ell=1}^n a_\ell v(x^\ell) \Big)^2 \bigg]\\
&\ge C_1 \int_{\R^k} \bigg| \prod_{j=1}^k (e^{ix_j\xi_j} - 1) - \sum_{\ell=1}^n
a_\ell \prod_{j=1}^k (e^{ix^\ell_j \xi_j} - 1) \bigg|^2 
\frac{d\xi}{(\sum_{j=1}^k |\xi_j|^{\alpha_j})^{Q+2}}.
\end{split}
\end{equation}
Let $\rho = \min\{1, a(2b)^{-\alpha^*/\alpha_*}\}$,
where $\alpha^* = \max\{\alpha_1, \dots, \alpha_k\}$ and 
$\alpha_* = \min\{\alpha_1, \dots, \alpha_k\}$.
For each $j = 1, \dots, k$, let $\phi^j: \R \to \R_+$ be a nonnegative smooth
function supported on $[-\rho, \rho]$ satisfying $\phi^j(0) = 1$.
Let $\phi^j_r(z) = r^{-\alpha_j^{-1}}\phi^j(r^{-\alpha_j^{-1}}z)$
and let $\widehat \phi^j_r$ denote the Fourier transform of $\phi^j_r$.
Consider the integral
\[ 
I:= \int_{\R^k} \bigg[\prod_{j=1}^k (e^{ix_j\xi_j} - 1) - \sum_{\ell=1}^n
a_\ell \prod_{j=1}^k (e^{ix^\ell_j \xi_j} - 1)\bigg] \prod_{j=1}^k \big[e^{-ix_j\xi_j} 
\widehat\phi^j_r(\xi_j)\big] \, d\xi. 
\]
Then, by inverse Fourier transform,
\[ I = (2\pi)^k \bigg[ \prod_{j=1}^k (\phi^j_r(0) - \phi^j_r(x_j) ) -\sum_{\ell=1}^n a_\ell
\prod_{j=1}^k (\phi^j_r(x_j - x_j^\ell) - \phi^j_r(x_j))\bigg]. \]
Note that $\phi^j_r(0) = r^{-Q}$. 
Since $r \le (2b)^{\alpha^*}$, we have
$r^{-\alpha_j^{-1}}|x_j| \ge a(2b)^{-\alpha^*/\alpha_*} \ge \rho$, 
thus $\phi^j_r(x_j) = 0$.
Also, for each $\ell$, by the definition of $r$, $\max_j |x_j - x^\ell_j|^{\alpha_j} \ge r$, 
so there exists some $j$ such that $|x_j - x^\ell_j|^{\alpha_j} \ge r$.
For this $j$, $r^{-\alpha_j^{-1}}|x_j - x^\ell_j| \ge 1 \ge \rho$, and hence
$\phi^j_r(x_j - x^\ell_j) = 0$.
This implies that for each $\ell$,
\[ \prod_{j=1}^k (\phi^j_r(x_j - x_j^\ell) - \phi^j_r(x_j)) = 0. \]
Therefore, we have
\begin{equation}\label{eg2:I}
I = (2\pi)^k r^{-Q}.
\end{equation}

On the other hand, by the Cauchy--Schwarz inequality and \eqref{eg2:varLB} above,
\[ I^2 \le \frac{1}{C_1} \E\bigg[ \Big( v(x) - \sum_{\ell=1}^n a_\ell v(x^\ell) \Big)^2 \bigg] 
\times \int_{\R^k} \prod_{j=1}^k |\widehat \phi^j_r(\xi_j)|^2 
\Big(\sum_{j=1}^k |\xi_j|^{\alpha_j}\Big)^{Q+2} d\xi. \]
By $\widehat \phi^j_r(\xi_j) = \widehat \phi^j(r^{\alpha_j^{-1}}\xi_j)$ and the change of 
variables $\xi_j \mapsto r^{-\alpha_j^{-1}}\xi_j$, the integral on the right-hand side is 
equal to
\begin{align*}
r^{-2Q-2} \int_{\R^k} \prod_{j=1}^k |\widehat \phi^j(\xi_j)|^2 
\Big(\sum_{j=1}^k |\xi_j|^{\alpha_j}\Big)^{Q+2} d\xi = C_0 r^{-2Q-2}.
\end{align*}
Therefore, together with \eqref{eg2:I}, we have
\[ (2\pi)^{2k} r^{-2Q} = I^2 \le \frac{C_0}{C_1} r^{-2Q-2} \,
\E\bigg[ \Big( v(x) - \sum_{\ell=1}^n a_\ell v(x^\ell) \Big)^2 \bigg] \]
and \eqref{eg2:LND} follows. The proof is complete.
\end{proof}

\begin{corollary}
Let $T$ be a compact rectangle in $\R^k$ away from the axes.
Then, Theorems \ref{Thm:ChungLIL}, \ref{Thm:LIL} and \ref{Thm:MC} 
can be applied to the Gaussian random field $v$ defined by \eqref{eg2}.
\end{corollary}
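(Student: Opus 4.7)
The plan is very short: the corollary is a packaging statement, and the substantive work has already been done in Lemmas \ref{Lem:8.1} and \ref{Lem:8.2}. What remains is just to check that the three hypotheses of Theorems \ref{Thm:ChungLIL}, \ref{Thm:LIL}, and \ref{Thm:MC} — namely Assumptions \ref{a1}, \ref{a2}, and \ref{a3} — are all in force for the random field $v$ defined in \eqref{eg2} on the given $T$.

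First, I would invoke Lemma \ref{Lem:8.1}. Its explicit construction of the stochastically scattered Gaussian noise $v(A,x)$ (with disjointness translating into independence) supplies part (a) of Assumption \ref{a1}, while the bound \eqref{eg2:approx} is precisely the estimate \eqref{Eq:a1-1} with exponents $\gamma_j = \alpha_j^{-1} - 1$ (so that the H\"older indices match the $\alpha_j$ used in the definition of $\Delta$). Since the lemma asserts that we may take $a_0 = 0$, the second bound \eqref{Eq:a1-2} is vacuous, and Assumption \ref{a1}(b) is satisfied in full.

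Next, because $T$ is assumed to be a compact rectangle away from the axes, Lemma \ref{Lem:8.2} applies verbatim and gives the strong local nondeterminism bound, i.e., Assumption \ref{a2}, with the same metric $\Delta(x,y) = \sum_{j=1}^k |x_j - y_j|^{\alpha_j}$ that the main theorems use. The final sentence of Lemma \ref{Lem:8.2}, namely $\|v(x) - v(y)\|_{L^2} \ge \sqrt{c_2}\,\Delta(x,y)$ for $x, y \in T$, is exactly Assumption \ref{a3}.

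With Assumptions \ref{a1}, \ref{a2}, and \ref{a3} all verified on $T$, Theorems \ref{Thm:ChungLIL}, \ref{Thm:LIL}, and \ref{Thm:MC} apply directly to $v$, proving the corollary. There is essentially no obstacle in this step, since the only nontrivial checks — the harmonizable-type approximation property and the anisotropic strong LND — have been dispatched by the two preceding lemmas; the hypothesis that $T$ avoids the axes is needed solely to make Lemma \ref{Lem:8.2} available, which in turn ensures both \ref{a2} and \ref{a3}.
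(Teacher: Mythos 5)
Your proposal is correct and follows exactly the route the paper intends: Lemma \ref{Lem:8.1} gives Assumption \ref{a1} (with $a_0=0$, making \eqref{Eq:a1-2} vacuous), and Lemma \ref{Lem:8.2} gives Assumption \ref{a2} and, via its last sentence, Assumption \ref{a3}, so the three main theorems apply on the compact rectangle $T$ away from the axes. Nothing further is needed.
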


We point out that even though the Gaussian random field $v = \{v(x), x \in \R^k\}$ 
defined by (\ref{eg2}) and the fractional Brownian sheet with parameters 
$(\alpha_1, \ldots, \alpha_k) \in (0, 1)^k$ share some similarity in their definitions
and many sample path properties, some of their other fine properties such as 
Chung's LILs and exact Hausdorff measure functions are rather different (see Lee \cite{L21}).
Instead, it can be proved that these latter properties of $v = \{v(x), x \in \R^k\}$
are similar to those in \cite{LX10, LX12} for Gaussian random fields with 
stationary increments and spectral density $f$ that satisfies (\ref{eg2:f}). 

 \medskip
\bigskip{\bf Acknowledgements}\, The research of Y. Xiao is partially supported 
by NSF grant DMS-1855185.

\end{document}